\newtheorem{proposition}{Proposition}[section]
\newtheorem{theorem}[proposition]{Theorem}
\newtheorem{corollary}[proposition]{Corollary}
\newtheorem{lemma}[proposition]{Lemma}
\newtheorem{remark}[proposition]{Remark}
\newtheorem{example}[proposition]{Example}
\newcommand{\nc}{\newcommand}
\nc{\I}{{\mathbf 1}}
\nc{\bN}{{\mathbf N}}
\nc{\bG}{{\mathbf G}}
\nc{\bM}{{\mathbf M}}
\nc{\cB}{{\mathcal B}}
\nc{\cM}{{\mathcal M}}
\nc{\cG}{{\mathcal G}}
\nc{\R}{{\mathbb R}}
\nc{\N}{{\mathbb N}}
\nc{\Z}{{\mathbb Z}}
\nc{\BX}{{\mathbb X}}
\nc{\BY}{{\mathbb Y}}
\nc{\BM}{{\mathbb M}}
\nc{\cX}{{\mathcal X}}
\nc{\cY}{{\mathcal Y}}
\nc{\cN}{{\mathcal N}}
\nc{\cF}{{\mathcal F}}
\nc{\rX}{{\mathfrak X}}
\nc{\BH}{\mathbb{H}}
\DeclareMathOperator*{\esssup}{\mathrm{ess\, sup}}
\nc{\BP}{\mathbb{P}}
\nc{\BE}{\mathbb{E}}
\nc{\BQ}{\mathbb{Q}}
\numberwithin{equation}{section}
\begin{document} 

\renewcommand{\thefootnote}{\fnsymbol{footnote}}
\author{{\sc Mikhail Chebunin\footnotemark[1]} \hspace{0.1mm} and {\sc G\"unter Last\footnotemark[2]}} 
\footnotetext[1]{mikhail.chebunin@uni-ulm.de, 
Institute of Stochastics, Ulm University, 89069 Ulm, Germany. 
}
\footnotetext[2]{guenter.last@kit.edu,  
Institute of Stochastics, Karlsruhe Institute of Technology, 
76131 Karlsruhe, Germany. 
}

\title{On strong sharp phase transition in the random connection model} 
\date{\today}
\maketitle

\begin{abstract} 
\noindent 
We consider a random connection model (RCM) $\xi$ driven by a Poisson process $\eta$. We derive exponential moment bounds for an arbitrary cluster, provided that the intensity $t$ of $\eta$ is below a certain critical intensity $t_T$. The associated subcritical regime is characterized by a finite mean cluster size, uniformly in space. Under an exponential decay assumption on the connection function, we also show that the cluster diameters are exponentially small as well.
In the important stationary marked case and under
a uniform moment bound on the connection function,
we show that $t_T$ coincides with $t_c$, the largest $t$ for which $\xi$ does not percolate.
In this case, we also derive some percolation mean field bounds.  These findings generalize some of the main results in \cite{CaicedoDickson24}.
Even in the classical unmarked case, our results are more general than what has been previously 
known. Our proofs are partially based on some stochastic monotonicity properties, which might be 
of interest in their own right. 
\end{abstract}

\noindent
{\bf Keywords:} Random connection model, Poisson process, percolation,
sharp phase transition, mean field lower bound, stochastic monotonicity.

\vspace{0.1cm}
\noindent
{\bf AMS MSC 2020:} 60K35, 60G55, 60D05

\section{Introduction}\label{intro}

Let $(\BX,d)$ be a complete separable metric space, 
denote its Borel-$\sigma$-field by $\cX$, and let $\lambda$ be
a locally finite and diffuse measure on $\BX$.
Let $t\in\R_+:=[0,\infty)$ be an intensity parameter
and let $\eta$ be a Poisson process on $\BX$ with intensity
measure $t\lambda$, defined over a probability space 
$(\Omega,\cF,\BP)$. We often write $\BP_t$ instead
of $\BP$ and $\BE_t$ for the associated expectation operator. 

Let $\varphi\colon\BX^2\to[0,1]$ be a measurable
and symmetric function satisfying
\begin{align}\label{evarphiintlocal}
D_\varphi(x):=
\int \varphi(x,y)\,\lambda(dy)<\infty, \quad \lambda\text{-a.e.} \ x\in\BX.
\end{align}
We refer to $\varphi$ as {\em connection function}.
The {\em random connection model} (RCM)
is the random graph $\xi$ whose vertices are the points of $\eta$
and where a pair of distinct points $x,y\in\eta$ 
forms an edge with probability $\varphi(x,y)$, independently for
different pairs.  In an Euclidean setting, the RCM was introduced in
\cite{Pen91} (see \cite{MeesterRoy} for a textbook treatment), 
while the general Poisson
version was studied in \cite{LastNestSchul21}.  The RCM is a
fundamental and versatile example of a spatial random graph.  
Of particular interest is the {\em stationary  marked} case,
where $\varphi$ is translation invariant in the spatial coordinate. Special cases are the  Boolean model (see \cite{LastPenrose18, SW08}) with
general compact grains and the so-called weighted RCM; see 
\cite{BetschLast21,CaicedoDickson24, DicksonHeydenreich22, GLM21, GHMM22, HHLM23,
LastZiesche17, Pen16}.

Following common terminology of percolation theory, we refer to a
component of $\xi$ as {\em cluster}. The RCM $\xi$ {\em percolates},
if it has an infinite cluster, that is a component with infinitely
many vertices. Generalizing many earlier results, it was shown in
\cite{ChLast25} that the stationary marked RCM can have at most one
infinite cluster, provided a natural irreducibility assumption
holds. 
Take $v\in\BX$ and add independent connections
between $v$ and the points from $\eta$. 
Let $C^v$ denote the component of $v$ in this augmentation of $\xi$.
The  {\em critical intensity} $t_c$ is defined by
\begin{align}\label{e:tc}
t_c:=\sup\{t\ge 0: \BP_t(|C^v|<\infty)=0\; \text{for $\lambda$-a.e.\ $v$}\},
\end{align}
where $|C^v|$ stands for the number of vertices of $C^v$.
A second critical intensity is defined by
\begin{align}\label{e:tT}
t_T:=\sup\big\{t\ge 0: \esssup_{v\in\BX}\BE_t |C^v|<\infty\big\},
\end{align}
where the essential supremum refers to $\lambda$.  It is clear that $t_T\le t_c$. By a {\em sharp phase transition} it is usually understood that for $t<t_c$ the clusters are not only finite
but have a finite first moment in some suitable sense.
If $t<t_T$, we establish here uniform exponential moment properties for the
size and, under an additional necessary assumption on the connection
function, also for the diameter of a cluster.
Therefore, we refer to the identity $t_c=t_T$ as {\em strong sharp phase transition}.
If $\xi$ is a stationary marked RCM,  then we prove a strong sharp phase transition
under an integrability assumption on $\varphi$, which is only slightly stronger than the one required for $t_T>0$.
Under this assumption we also show that
$t_c$ is the smallest intensity where the mean size of a typical cluster is infinite. This identity might
serve as a definition of a sharp phase transition for the stationary marked RCM.
We also provide lower bounds for two critical
exponents. Our method of proving the strong sharpness of the phase
transition will be based on transferring some of the beautiful ideas
from the seminal paper \cite{AizBar87} in a way similar to
\cite{CaicedoDickson24}.

In the following we shall present two of our main results in greater detail along with a discussion of the 
relevant literature.  We will show in Lemma \ref{l:not_triv}  that
$t_T>0$ if and only if
\begin{align}\label{e:1009}
D^*_\varphi:=\esssup\limits_{v\in\BX}D_\varphi(v)<\infty
\end{align}
and that $t_T\ge (D^*_\varphi)^{-1}$.
The following result is the content of our Theorem \ref{tmoments_2}.

\begin{theorem}\label{t:intro1}  
For $t<t_T$ there exists $\delta_1\equiv\delta_1(t)$ such that $\esssup\limits_{v\in\BX}\BE_t e^{\delta_1 |C^v|}<\infty$.
\end{theorem}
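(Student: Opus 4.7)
My plan is to establish a factorial moment bound of the form $\esssup_v \BE_t[|C^v|^k] \le C\, k!\, A^k$ for all $k \ge 1$, with constants $C, A$ depending only on $t$. Summing the Taylor series then gives, for $\delta < 1/A$,
\[
\BE_t e^{\delta|C^v|} = \sum_{k \ge 0}\frac{\delta^k}{k!}\BE_t|C^v|^k \le \frac{C}{1 - \delta A} < \infty,
\]
uniformly in $v$, which is the desired conclusion. The hypothesis $t < t_T$ furnishes the base case $k=1$: by definition of $t_T$, $M := \esssup_v \BE_t|C^v| < \infty$.

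\textbf{Moments via Mecke.} For the inductive step, I would use the identity
\[
|C^v|^{k+1} = |C^v|^k + \sum_{x \in \eta \cap C^v} |C^x|^k
\]
(valid since $|C^v| = |C^x|$ on $\{x \in C^v\}$) together with the Mecke formula to obtain
\[
\BE_t|C^v|^{k+1} = \BE_t|C^v|^k + t\!\int \BE_t\bigl[\I\{v \leftrightarrow x\}\,|C^x|^k\bigr]\,\lambda(dx),
\]
where the integrand is computed on the augmented Poisson process $\eta + \delta_x$. Iterating this construction $k$ times represents $\BE_t|C^v|^{k+1}$ as a finite combinatorial sum of integrals of multi-point connectivity functions $\BP_t(v, x_1, \ldots, x_k \text{ in same cluster})$ against $\prod t\lambda(dx_i)$.

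\textbf{Tree-graph inequality and main obstacle.} The crux of the argument is a tree-graph bound of Aizenman--Newman type,
\[
\BP_t(v, x_1, \ldots, x_k \text{ in same cluster}) \le \sum_{T} \prod_{(y,z) \in T} \tau_t(y, z),
\]
where the sum runs over spanning trees $T$ on $\{v, x_1, \ldots, x_k\}$ and $\tau_t(y,z) := \BP_t(y \leftrightarrow z)$ is the two-point function. Since $t\int \tau_t(y,\cdot)\,\lambda \le M - 1$, integrating along each spanning tree rooted at $v$ contributes at most $M^k$; combined with Cayley's count of $(k+1)^{k-1}$ rooted spanning trees and Stirling's formula, this yields the factorial moment bound. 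The main obstacle is proving the tree-graph inequality in the RCM setting. Conditionally on $\eta$, the edges of $\xi$ form an independent Bernoulli field, so a van den Berg--Kesten disjoint-occurrence argument applies at the edge level, and the tree-graph bound then follows by upper-bounding the connectivity event through an edge-disjoint spanning-tree realization; the technical difficulty lies in transferring this bound to the continuum setting and integrating cleanly against $\lambda$. An alternative route, consonant with the paper's reference to Aizenman--Barsky, is a \emph{ghost field} approach: add independent Poisson marks of density $h > 0$ to the vertices and derive differential inequalities in $(t,h)$ for the magnetization $\mathfrak{M}(t,h) := 1 - \BE_t e^{-h|C^v|}$; subcriticality at $t < t_T$ then translates into analyticity of $\mathfrak{M}(t, \cdot)$ at some positive $h$, which is precisely the exponential moment statement.
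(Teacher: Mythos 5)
Your overall strategy (uniform factorial moment bounds $\esssup_v\BE_t|C^v|^k\le C\,k!\,A^k$ summed into an exponential moment) is genuinely different from the paper's argument, which never touches tree-graph or BK inequalities: there, $V^v$ is stochastically dominated by a spatial branching process $\widetilde W^{v,h}$ built from independent clusters each run for $h(x)$ generations (Proposition \ref{p2.9}), the function $h$ is chosen so that $c^v_{h(v)}(t)\le 1/2$ for $\lambda$-a.e.\ $v$ with $h\le\lceil 2c^*(t)\rceil$ -- possible precisely because $t<t_T$ gives $c^*(t)<\infty$ -- and exponential moments of $|\widetilde W^{v,h}|$, uniform in $v$, come from an exploration process with a Lyapunov/test-function bound on its extinction time combined with domination of each increment by a Poisson Galton--Watson total progeny (Theorem \ref{t:subcrit_ex}, then Theorem \ref{tmoments_2}). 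What the paper's route buys is that it works for a general RCM on an arbitrary metric space under no assumption beyond $t<t_T$.

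Your route, however, has a genuine gap. First, the tree-graph inequality as you state it is false: with $T$ ranging over spanning trees on the prescribed vertices only, already three vertices attached to a common hub by edges of probability $p$ give a three-point function of order $p^3$ while every product of two two-point functions is of order $p^4$. The correct Aizenman--Newman inequality necessarily involves extra branch vertices that are summed, here integrated against $t\lambda$, e.g.\ $\BP_t(x,y,z\ \text{in one cluster})\le t\int\tau_t(x,w)\tau_t(y,w)\tau_t(z,w)\,\lambda(dw)$ plus degenerate terms; with this correction your scheme still yields moments of order $k!\,(cM^2)^k$ and hence the conclusion, so this error is repairable. The substantive missing step is the proof of the (corrected) tree-graph bound for the RCM: conditioning on $\eta$ and applying the discrete BK inequality produces products of \emph{conditional} connection probabilities on the same Poisson configuration, and deconditioning these into products of genuine two-point functions integrated against $\lambda$ is exactly the nontrivial continuum ingredient you defer -- and it is not available off-the-shelf in the generality of Theorem \ref{t:intro1} (general $\BX$, no stationarity). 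Finally, the ghost-field alternative as described does not deliver the statement: finiteness, indeed analyticity, of $\BE_t e^{-h|C^v|}$ for $h>0$ is automatic, and extending it across $h=0$ is equivalent to the exponential moment bound you are trying to prove; in the paper the magnetization machinery is used only later, for $t_T=t_c$ and the mean-field bounds in the stationary marked case, not for this theorem.
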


We would like to emphasize that Theorem \ref{t:intro1} applies as soon as $t_T>0$, without making any further
assumptions on the state space or the connection function.
An important special case of a RCM is the stationary marked RCM. 
In that case $\eta$ is a Poisson process on $\BX:=\R^d\times\BM$ with intensity
measure $t\lambda_d\otimes\BQ$, where $\BM$ is a complete separable metric space,
$\lambda_d$ denotes Lebesgue measure on $\R^d$ and  $\BQ$ is a probability measure on $\BM$.
The connection function $\varphi$ is assumed to be translation invariant in the sense of
\eqref{e:4.1}. 
In the {\em unmarked case} $|\BM|=1$ it was proved in \cite{Meester95}
that $t_T= t_c$, provided that the (in a sense) minimal integrability
assumption $\int\varphi(0,x)\,dx<\infty$ is satisfied 
and assuming that $\varphi(0,x)$ is a non-increasing function of
the  Euclidean norm of $x$.  It was shown in \cite{KuePenrose24} 
that the cluster size has an exponential tail, provided the connection function is supported by the unit ball. 
Afterwards, this strong sharp phase transition
was derived in \cite{Higgs25} for an isotropic (and monotone) 
connection function with possibly unbounded support. Even in this very special case
of an (unmarked) stationary RCM,
Theorem \ref{t:intro1} extends all these results without making any assumption
other than the integrability of $\varphi$.
In the marked case, the situation is considerably more
complicated. To state a second main result of our paper, 
we define $d_\varphi(p,q):=\int\varphi((0,p),(x,q))\,dx$ for $p,q\in\BM$.
Our Theorem \ref{t:main} (and Remark \ref{r:suff})
shows the following. 
To avoid trivialities, we assume that $\int d_\varphi(p,q)\,\BQ^2(d(p,q))>0$.

\begin{theorem}\label{t:intro2} Suppose that $\xi$ is a stationary marked random connection model
and assume that 
\begin{align}\label{e:sharp}
\esssup_{p\in\BM}\int d_\varphi(p,q)^2\,\BQ(dq)<\infty,
\end{align} 
where the essential supremum refers to $\BQ$.
Then $t_T=t_c\in(0,\infty)$ and $\int \BE_{t_c} |C^{(0,p)}|\,\BQ(dp)=\infty$.
\end{theorem}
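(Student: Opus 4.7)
I would adapt the Aizenman-Barsky (AB) differential-inequality strategy of \cite{AizBar87}, following the template used in the unmarked case by \cite{CaicedoDickson24}, to the stationary marked setting; the condition \eqref{e:sharp} is precisely the $L^2$-type control on the mark-integrated connection kernel that makes the relevant pivotal/tree-graph expansions close. Non-triviality is handled first: since $\BQ$ is a probability measure, the Cauchy-Schwarz inequality combined with \eqref{e:sharp} gives
\begin{equation*}
D^*_\varphi \le \Big(\esssup_{p\in\BM}\int d_\varphi(p,q)^2\,\BQ(dq)\Big)^{1/2} < \infty,
\end{equation*}
so Lemma \ref{l:not_triv} yields $t_T\ge 1/D^*_\varphi>0$, and consequently $t_c\ge t_T>0$. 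Finiteness $t_c<\infty$ follows from a standard multi-type Galton-Watson comparison: the non-triviality assumption $\int d_\varphi\,\BQ^2>0$ produces a mark pair with positive connection integral, from which a supercritical branching upper bound on $C^{(0,p)}$ can be constructed for $t$ sufficiently large.

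The core of the argument is to introduce a ghost field of intensity $h>0$ by independently declaring each point of $\eta$ green with probability $1-e^{-h}$, and to define the magnetization
\begin{equation*}
M(t,h):=\int \BP_t\big(C^{(0,p)}\cap G_h\neq\varnothing\big)\,\BQ(dp),
\end{equation*}
where $G_h$ denotes the green set. Independence of the coloring yields $\partial_h M(t,0^+)=\int \BE_t|C^{(0,p)}|\,\BQ(dp)$, while the Mecke formula expresses $\partial_t M$ as a sum over pivotal edges. Combining these identities with a BK/tree-graph bound in the spirit of \cite{CaicedoDickson24} leads to an AB-type differential inequality of the schematic form
\begin{equation*}
M(t,h)\;\le\; h\,\partial_h M(t,h) \;+\; M(t,h)^2 \;+\; t\,M(t,h)\,\partial_t M(t,h).
\end{equation*}
The second-moment hypothesis \eqref{e:sharp} is invoked exactly here: each pivotal edge contributes a factor $d_\varphi(p,q)$ with a free second mark $q$ that must be integrated against $\BQ$, and the resulting cross terms can only be absorbed back into $M$ and its derivatives under uniform $L^2(\BQ)$ control on $d_\varphi(p,\cdot)$.

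The standard AB analysis of this inequality yields $M(t,0^+)=0$ for every $t<t_c$; together with the Mecke/Russo-type formula this forces $\esssup_{p}\BE_t|C^{(0,p)}|<\infty$ for all $t<t_c$, so that $t_T\ge t_c$, and hence $t_T=t_c\in(0,\infty)$. Integrating the same inequality at $t=t_c$ produces the mean-field lower bound $M(t_c,h)\ge c\sqrt{h}$ as $h\downarrow 0$, whose $h$-derivative at zero is precisely $\int \BE_{t_c}|C^{(0,p)}|\,\BQ(dp)$, giving the last assertion. The main technical obstacle is the derivation of the AB inequality in the marked setting: transplanting the unmarked arguments of \cite{CaicedoDickson24} requires carefully tracking the mark dependencies at every Mecke differentiation and BK-type expansion, and it is exactly at this step that the hypothesis \eqref{e:sharp} is both necessary and sufficient for the chain of estimates to close. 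A secondary but routine task is to upgrade the $\BQ$-integrated bound on $\chi(t,p)=\BE_t|C^{(0,p)}|$ obtained from the AB inequality to the pointwise (essential) supremum needed in the definition of $t_T$, which should follow from translation invariance and the same disintegration technique underlying Theorem \ref{t:intro1}.
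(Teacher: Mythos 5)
Your overall strategy (Aizenman--Barsky magnetization and differential inequalities, transplanted to the marked setting as in \cite{CaicedoDickson24}) is indeed the one the paper follows, and your Cauchy--Schwarz observation $D^*_\varphi\le\|d_\varphi\|_{\infty,2}<\infty$ correctly yields $t_T>0$ via Lemma \ref{l:not_triv}. But the sketch has genuine gaps at the decisive steps. The most serious is the sharpness step itself: you assert that the AB analysis ``yields $M(t,0^+)=0$ for every $t<t_c$'' and that this ``forces $\esssup_p\BE_t|C^{(0,p)}|<\infty$''. The vanishing $M(t,0^+)=\bar\theta(t)=0$ for $t<t_c$ is simply the definition of $t_c$, and it does not imply finite susceptibility --- that implication \emph{is} the theorem. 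The AB machinery runs in the contrapositive direction: one shows (Lemma \ref{l:M_lower_bound}, Proposition \ref{p:per_lower_bound}) that if the susceptibility diverges at $t$ then $\widebar{M}(t,\gamma)\gtrsim\sqrt{\gamma}$, and integrating the differential inequalities over $[t,t']\times[\gamma_1,\gamma_2]$ then forces $\bar\theta(t')>0$ for every $t'>t$; hence $t_c\le t_T^{(1)}$, which is combined with $t_T=t_T^{(1)}$ (Proposition \ref{p:equality_t_T}) and the trivial $t_T\le t_c$. The divergence $\bar c(t_c)=\infty$ comes from the susceptibility mean-field bound (Proposition \ref{p:susc_mean_field_bound}) by letting $t\uparrow t_T$, not from an a priori $\sqrt{h}$ lower bound on the magnetization at $t_c$ (in the paper that lower bound is a \emph{consequence} of $\bar c_f(t_T)=\infty$, so your order of deduction is circular).

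Second, you have not located where \eqref{e:sharp} actually enters. In the marked case the differential inequalities involve $M^*(t,\gamma)=\|M(t,\gamma,\cdot)\|_\infty$ rather than $\widebar{M}$, and the argument only closes after proving $M^*\le\Delta_n(t)\widebar{M}$ and $c^*\le\Delta_n(t)\bar c$ for some $n$ with $\Delta_n(t)<\infty$. Your description (a single pivotal factor $d_\varphi(p,q)$ with one free mark integrated against $\BQ$) corresponds to $n=1$ and would require $\|d_\varphi\|_{\infty,\infty}<\infty$, which is exactly the stronger hypothesis of \cite{CaicedoDickson24}. The point of the weaker $L^2$ condition \eqref{e:sharp} is that, by sub-multiplicativity of the self-avoiding-walk kernel and Cauchy--Schwarz, $\|d_\varphi^{[2]}\|_{\infty,\infty}\le\|d_\varphi\|_{\infty,2}^2<\infty$, so the cluster must be cut at the \emph{second} generation (Corollary \ref{c:clus_dom} and Lemma \ref{l:M_upper_bound} with $n=2$); this two-generation device is the new idea your proposal misses. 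Finally, your argument for $t_c<\infty$ is backwards: a ``supercritical branching upper bound'' on $C^{(0,p)}$ can never establish percolation (stochastic upper bounds only prove non-percolation); one needs a coupling from \emph{below} with a percolating unmarked RCM restricted to a positive-$\BQ$-measure set of marks, as in Proposition \ref{p:nottriv_t_c_2} via \cite{Pen91}.
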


Note that \eqref{e:sharp} is (slightly) stronger than \eqref{e:1009},
implicitly assumed in Theorem \ref{t:intro1}.  Under the uniform
second moment assumption \eqref{e:sharp}, Theorems \ref{t:intro1} and
\ref{t:intro2} imply a strong sharp phase transition at $t_c=t_T$.  
At first glance it may be surprising that \eqref{e:sharp} yields uniform exponential moment
bounds for the cluster sizes in the subcritical regime $t<t_c$. 
This phenomenon can be understood by observing that assumption \eqref{e:1009} guarantees
that the degree distribution of the vertices has finite exponential moments of every order, uniformly in space. Note that, in particular, the classical
stationary RCM undergoes a strong sharp phase transition, without any assumption other than
integrability of the connection function. The same holds in the marked case, provided $\BQ$ is supported by a finite set. For the Boolean model with
(deterministically) bounded grains (covered by Theorems \ref{t:intro1}
and \ref{t:intro2}) it was proved in \cite{Ziesche18} that the size of
a typical cluster is exponentially small for $t<t_c$. This is a
continuum analogue of classical results in \cite{Men86} and
\cite{AizBar87}; see also \cite{DuminilTassion2016} for a new and
elegant proof (that inspired \cite{Ziesche18} and
\cite{KuePenrose24}).  Using a continuum version of the
OSSS-inequality the result from \cite{Ziesche18} was extended in
\cite{LastPeccYogesh23} so as to cover the so-called $k$-percolation.
Theorem \ref{t:intro2} generalizes \cite[Corollary
1.10]{CaicedoDickson24} by proving a sharp phase transition under a
weaker integrability assumption on the connectivity function and
without assuming any kind of irreducibility.  It also generalizes a strong sharp phase transition result from \cite{Pabst25}, which
assumes the connection function to have a bounded support in space
uniformly in the marks.  Moreover, our results extend \cite[Theorem
1.8]{CaicedoKolesnikov25} (uploaded after the first arXiv-version of
this paper) for a so-called min-reach RCM, by relaxing the exponential
moment assumption on their reach function to a finite moment of order
$2d$; see Example \ref{ex:min-reach}. Both
\cite{Pabst25} and \cite{CaicedoKolesnikov25} are using the discrete OSSS-inequality.

Condition \eqref{e:sharp} fails for a spherical Boolean model
with a radius distribution of unbounded support; see Example \ref{ex:Gilbert}.
To get from \eqref{e:sharp} to weaker conditions for 
a sharp phase transition at $t_c$, remains a challenging
problem. In the case of the spherical Boolean model,  
an important step was made in \cite{Duminiletal2020}, 
treating an unbounded radius distribution with a finite $d$-th moment. 
In this case $t_c>0$ while $t_T=0$ (see also Example \ref{ex:Gilbert}). 
Nevertheless, it is proved there in the subcritical regime
that under the polynomial (resp.\ exponential) decay behavior of the tail of the radius distribution,
certain connection probabilities show a similar behavior.
Using an approach from \cite{Gou08,GouTheret19} it was shown in \cite{DembinTassion22} that even for Pareto distributed radii (with minimal moment assumption)
the model undergoes such a {\em subcritical sharpness}.
Related results for the {\em soft Boolean model} with Pareto distributed weights (a special case of Example \ref{ex:weighted}) can be found  in \cite{JaLueOrt24}.

The paper is organized as follows. In Section \ref{s:definitions} we give the formal definition of the RCM $\xi$ and introduce the notation used throughout the paper.
Section \ref{s:basics} presents the RCM version of the multivariate Mecke equation,
while Section \ref{s:perc} introduces some basic notation and concepts of percolation theory. Section \ref{secMarkov}
presents a spatial Markov property (taken from \cite{ChLast25}) and some results on stochastic ordering, 
which might be of some independent interest.
Section \ref{secsub} provides a criterion for subcriticality and for
exponential moments of cluster sizes, while Section \ref{secDiameter}
deals with cluster diameters.
In Section \ref{s:stationaryRCM} we discuss the stationary marked RCM,
with Theorem \ref{t:main} summarizing our main results.
Proposition \ref{p:susc_mean_field_bound} presents a mean field lower bound
for mean cluster sizes in the subcritical case, while Theorem \ref{t:per_mean_field_bound}
provides such a bound for percolation probabilities in the supercritical case.

\section{Formal definition of the RCM}\label{s:definitions}

To define a RCM we follow \cite{ChLast25}.  Let
$\bN$ ($\bN_{<\infty}$) denote the space of all simple locally finite
(finite) counting measures on $\BX$, equipped with the standard
$\sigma$-field, see e.g.\ \cite{LastPenrose18}.  A measure $\nu\in\bN$
is identified with its support $\{x\in\BX:\nu(\{x\})=1\}$ and
describes the set of vertices of a (deterministic) graph.  If
$\nu(\{x\})=1$ we write $x\in\nu$. 
Any $\nu\in\bN$ can be written as a finite or
infinite sum $\nu=\delta_{x_1}+\delta_{x_2}+\cdots$ of Dirac measures, 
where the $x_i$ are pairwise distinct and do not accumulate in bounded sets.  The
space of (undirected) graphs with vertices from $\BX$ (and no loops)
is described by the set $\bG$ of all counting measures $\mu$ on
$\BX\times\bN$ with the following properties. First, we assume that the
measure $V(\mu):=\mu(\cdot\times\bN)$ is locally finite and simple,
that is, an element of $\bN$.  We write $x\in\mu$ if  $x\in V(\mu)$ (that is
$\mu(\{x\}\times\bN)=1$). In this case, there is a unique $\psi_x\in\bN$ such
that $(x,\psi_x)\in\mu$. We assume that $x\notin \psi_x$. Finally, if
$x\in V(\mu)$ and $y\in\psi_x$ then we assume that $(y,\psi_y)\in \mu$
and $x\in\psi_y$. Also, $\bG$ is equipped with the standard
$\sigma$-field. There is an edge between $x,y\in V(\mu)$ if $y\in\psi_x$ (and hence $x\in\psi_y$). If $\psi_x=0$, then $x$ is {\em isolated}.

We write $|\mu|:=\mu(\BX\times\bN)$ for the cardinality of $\mu\in\bG$ and similarly for $\nu\in\bN$. Hence $|\mu|=|V(\mu)|$. 
For $x,y\in V(\mu)$ we write $x\sim y$ (in $\mu$) if there is an edge between $x$ and $y$
and  $x \leftrightarrow y$ (in $\mu$) if there is a path in $\mu$ leading from $x$ to $y$.
For $A\subset\BX$ we write $x\sim A$ (in $\mu$) if there exists $y\in A\cap V(\mu)$
such that $x\sim y$.
Let $\mu,\mu'\in \bG$. Then $\mu$ is a {\em subgraph} of $\mu'$ if
$V(\mu)\le V(\mu')$ (as measures) and for each $(x,\psi)\in \mu$ and
$(x,\psi')\in\mu'$ we have $\psi\le \psi'$. Note that this is not the same
as $\mu\le\mu'$.

Let $\chi$ be a simple point process on $\BX$, which is a random element of
$\bN$. The reader should think of
a Poisson process possibly augmented by additional (deterministic) points.
By \cite[Proposition 6.2]{LastPenrose18} there exist random elements $X_1,X_2,\ldots $ of
$\BX$ such that $\chi=\sum^{|\chi|}_{n=1}\delta_{X_n}$,
where $X_m\ne X_n$ whenever $m\ne n$ and $m,n\le  |\chi|$.
Let $(Z_{m,n})_{m,n\in\N}$ be a double sequence of random elements 
uniformly distributed on $[0,1]$ such that $Z_{m,n}=Z_{n,m}$ for all
$m,n\in\N$ and such that $Z_{m,n}$, $m<n$, are independent.
Then the RCM (based on $\chi$) is the point process
\begin{align*}
\xi:=\sum^{|\chi|}_{m=1}\delta_{(X_m,\Psi_m)},
\end{align*}
where 
\begin{align*}
\Psi_m:=\sum^{|\chi|}_{n=1}\I\{n\ne m,Z_{m,n}\leq\varphi(X_m,X_n)\}\delta_{X_n}.
\end{align*} 
While the definition of $\xi$ depends on the ordering of the points of $\chi$, its distribution does not.

We now introduce some notation used throughout the paper.
For $\mu,\mu'\in\bG$, we often interpret $\mu+\mu'$ as the
measure in $\bG$ with the same support as $\mu+\mu'$.
A similar convention applies to $\nu,\nu'\in\bN$.
Let $\mu\in\bG$. For $B\in\cX$ we write $\mu(B):=\mu(B\times \bN)$.
More generally, given a measurable function $f\colon \BX\to \R$ we write 
$\int f(x)\,\mu(dx):=\int f(x)\,\mu(dx\times \bN)$. 
In the same spirit, we write $g(\mu):=g(V(\mu))$, whenever
$g$ is a mapping on $\bN$.
These (slightly abusing) conventions lighten the notation and should not cause any confusion. 
For $B\in\cX$ 
we denote by $\mu[B]\in\bG$ the {\em restriction} of $\mu$ to $B$, that is  the graph
with vertex set $V(\mu)\cap B$ which keeps only those edges
from $\mu$ with both end points from $B$. In the same way, we use the notation $\mu[\nu]$ for $\nu\in\bN$. 
Similarly, for a measure $\nu$ on $\BX$ (for instance for $\nu\in\bN$) 
we denote by $\nu_B:=\nu(B\cap\cdot)$  the restriction of $\nu$ to a set $B\in\cX$.
Assume now that $\mu$ is a subgraph of $\mu'$. For $n\in\N_0$ let
$C^\mu_n(\mu')\in\bG$ denote the restriction of $\mu'$ to  those $v\in V(\mu')$ with
$d_{\mu'}(v,\mu)=n$, where $d_{\mu'}$ denotes the distance within the graph $\mu'$.  
Note that $C^\mu_0(\mu')$ is just the graph $\mu$. Slightly abusing our notation,
we write $C^\mu_0(\mu')=\mu$ and $V^\mu_n(\mu')=V(C^\mu_n(\mu'))$. For $v\notin V(\mu')$ we set $C^v(\mu'):=0$,
interpreted as an empty graph (a graph without vertices).
The cluster $C^\mu(\mu')$  of $\mu$ in $\mu'$ is the graph $\mu'$ restricted to
\begin{align*}
V^\mu(\mu')=\sum^\infty_{n=0}V^\mu_n(\mu'),
\end{align*}
while $C^\mu_{\le{n}}(\mu')$, $n\in\N_0$, is the graph $\mu'$ restricted to 
$V^\mu_0(\mu')+\cdots+V^\mu_n(\mu')$. For later purposes, it will be convenient to define $C^\mu_{\le{-1}}(\mu')=C^\mu_{-1}(\mu'):=0$ as the zero measure. Throughout we write
$V^{V(\mu)}_n(\mu'):=V^{\mu}_n(\mu')$ and
$V^{\mu}_n(\mu',\cdot):=V^{\mu}_n(\mu')(\cdot)$, $n\in\N_0$, and
similarly for $V^{\mu}_{\le n}$ and $V^{\mu}$. We also often refer to $C^v(\mu')$ as the {\em cluster} of $v$ in $\mu'$ for $v\in V(\mu')$.

Given an RCM based on a Poisson process $\eta$ on 
$\BX$ with diffuse intensity measure $\lambda$,
we use the following notation. For $v\in\BX$ and $n\in\N_0$ we set
\begin{align*}
    C^v:=C^v(\xi^v), \quad V^v:=V^v(\xi^v), \quad C^v_n:=C^v_n(\xi^v), \quad V^v_n:=V^v_n(\xi^v), 
\quad C^v_{\le n}:=C^v_{\le n}(\xi^v), \quad V^v_{\le n}:=V^v_{\le n}(\xi^v),
\end{align*}
or $C^v_\lambda$, $V^v_\lambda$, $C^v_{n,\lambda}$, $V^v_{n,\lambda}$, $C^v_{\le n,\lambda}$ and $V^v_{\le n,\lambda}$, if we need to emphasize the dependence on $\lambda$. 
Moreover, we write $V^{v!}_{\le n}:=V^v_{\le n}-\delta_v$ and similarly for $V^v$.

\section{The Mecke equation}\label{s:basics}

Let $\xi$ be a RCM based on a Poisson process $\eta$ on 
$\BX$ with diffuse intensity measure $\lambda$.
Our first crucial tool is a version of the Mecke equation (see
\cite[Chapter 4]{LastPenrose18}) for  $\xi$.  Given $n\in\N$ and 
$\boldsymbol{x}_n:=(x_1,\ldots,x_n)\in\BX^{n}$ we denote
$\delta_{\boldsymbol{x}_n}:=\delta_{x_1}+\cdots+\delta_{x_n}$ and
$\eta^{\boldsymbol{x}_n}:=\eta+\delta_{\boldsymbol{x}_n}$ (removing possible multiplicities)
and let $\xi^{\boldsymbol{x}_n}$ denote a RCM based on $\eta^{\boldsymbol{x}_n}$.
It is useful to construct $\xi^{\boldsymbol{x}_n}$ in a specific way as follows.
We connect $x_1$ with the points in $\eta$ using independent connection decisions which
are independent of $\xi$.
We then proceed inductively, finally connecting $x_n$ to 
$\eta+\delta_{\boldsymbol{x}_{n-1}}$.
For a measurable function
$f\colon\BX^{n} \times \bG\to [0,\infty]$ the Mecke equation for $\xi$ states that 
\begin{align}\label{e:Mecke}
\BE \int f(\boldsymbol{x}_n,\xi)\,\eta^{(n)}(d \boldsymbol{x}_n)
= \BE \int f(\boldsymbol{x}_n,\xi^{\boldsymbol{x}_n})\,\lambda^{n}(d \boldsymbol{x}_n),
\end{align}
where integration with respect to  the {\em factorial measure} $\eta^{(n)}$
of $\eta$ means summation over all $n$-tuples of pairwise distinct points
from $\eta$. 

For given $v\in\BX$ and $\boldsymbol{x}_n\in\BX^n$
we denote $(v,\boldsymbol{x}_{n}):=(v,x_1,\ldots,x_n)\in\BX^{n+1}$. 
We sometimes use \eqref{e:Mecke} in the form
\begin{align}\label{e:Meckev}
\BE \int f(\boldsymbol{x}_n,\xi^v)\,\eta^{(n)}(d \boldsymbol{x}_n)
= \BE \int f(\boldsymbol{x}_n,\xi^{v,\boldsymbol{x}_{n}})\,\lambda^{n}(d \boldsymbol{x}_n).
\end{align}
The proofs of \eqref{e:Mecke} and \eqref{e:Meckev} can be found in \cite{ChLast25}.

\section{Percolation and critical intensities}\label{s:perc}
\subsection{Notation and terminology in the general case}\label{sub:generalnotation}

Let $t\ge 0$ be an intensity parameter
and let $\xi$ be a RCM based on a Poisson process $\eta$ on $\BX$ with an intensity measure $t\lambda$, where $\lambda$ is a locally finite and diffuse measure on $\BX$. 
The RCM $\xi$ {\em percolates} if it has an infinite cluster, a component with infinitely many vertices.
We also say that the RCM (or $t$) is {\em subcritical} if all clusters have only a finite number of points, that is,
\begin{align*}
\BP_t\big(|V^v|<\infty\big)=1,\quad \lambda\text{-a.e.\ $v\in\BX$}.
\end{align*}
In accordance with \eqref{e:tc}
we define the {\em critical intensity} $t_c$ as the supremum of all $t\in\R_+$ such that above holds. 
A standard coupling argument shows that $\xi$ is subcritical for all $t<t_c$.

Let $v\in\BX$, $t\ge 0$ and $n\in\N_0$. Mean generations and cluster sizes are denoted by
\begin{align*}
\begin{split}
    c^v_n(t):=\BE_t |V^v_n|, \quad
    c^v_{\le n}(t):=\BE_t |V^v_{\le n}|, \quad c^v(t):=\BE_t |V^v|.
\end{split}
\end{align*}
It is clear that $c^v_1(t)=t D_\varphi(v)$ and
\begin{align*}
    c^v(t)=\sum_{n=0}^\infty c_n^v(t)=\lim_{n\to\infty} c^v_{\le n}(t).
\end{align*}
For a measurable function $L\colon \BX \to\R\cup\{ \pm \infty\}$
we define the $\infty$-norm by $\| L\|_{\infty}:=\esssup_{v\in\BX} |L(v)|$,
where the essential supremum refers to $\lambda$. 
We abbreviate
\begin{align}\label{*notation}
D^*_{\varphi}:= \|D_\varphi\|_{\infty}, \quad c^*_n(t):=\| c_n(t)\|_{\infty},
\quad c^*_{\le n}(t):=\| c_{\le n}(t)\|_{\infty}, \quad c^*(t):=\|c(t)\|_{\infty}.
\end{align}
The {\em second critical intensity} $t_T$ is defined as the
supremum of all $t\in\R_+$ such that $c^*(t)<\infty$. It is clear that
$t_T\le t_c$. 

\subsection{The stationary marked RCM}\label{sub:notationstatRCM}

In this subsection, we consider the important special case of a stationary RCM; see e.g.\ \cite{CaicedoDickson24, DicksonHeydenreich22,ChLast25}. 
Let $\BM$ be a complete separable metric space (the mark space)
equipped with a probability measure $\BQ$ (the mark distribution).
Set $\BX:=\R^d\times\BM$ be equipped with the
product of the Borel $\sigma$-field $\cB(\R^d)$ on $\R^d$ and the
Borel $\sigma$-field on $\BM$. We assume that
$\lambda=\lambda_d\otimes\BQ$, where $\lambda_d$ denotes the Lebesgue measure on $\R^d$.  If $(x,p)\in\BX$ then we call $x$ location of $(x,p)$ and $p$ the mark of $x$. Instead of $\bN$ we consider the
(smaller set) $\bN(\BX)$ of all counting measures $\chi$ on $\BX$ such
that $\chi(\cdot\times\BM)$ is locally finite (w.r.t.\ the Euclidean
metric) and simple.  
We can and will assume that the Poisson process $\eta$ is a random element of $\bN(\BX)$. 

The symmetric connection function
$\varphi\colon (\R^d\times\BM)^2\to [0,1]$ is assumed to satisfy
\begin{align}\label{e:4.1}
\varphi((x,p),(y,q))= \varphi((0,p),(y-x,q)).
\end{align}
This allows us to write $\varphi(x,p,q):=\varphi((0,p),(x,q))$, 
where $0$ denotes the origin in $\R^d$. 
The RCM $\xi$ is {\em stationary} in the sense
that $T_x\xi\overset{d}{=}\xi$, $x\in\R^d$, where
for $\mu\in\bG$, the measure $T_x\mu$ is (shift of $\mu$ by $x$) defined by 
$T_x\mu:=\int \I\{(y-x,q,\nu)\in\cdot\}\,\mu(d(y,q,\nu))$.
It is also ergodic; see \cite{ChLast25}.

\begin{remark}\label{r:432}\rm
The argument in \cite{ChLast25} 
can be extended to yield that $T_x\xi^{(x,p)}\overset{d}{=}\xi^{(0,p)}$ for $\lambda_d\otimes\BQ$-a.e.\ 
$(x,p)\in\R^d\times\BM$.
Hence, if $f\colon\bG\to\R$ is measurable and shift invariant, then
$f(\xi^{(x,p)})\overset{d}{=}f(\xi^{(0,p)})$ for
$\lambda_d\otimes\BQ$-a.e.\  $(x,p)\in\R^d\times\BM$. Therefore, the definitions \eqref{e:tc}
and \eqref{e:tT} of $t_c$ and $t_T$ can be simplified. For instance, we have
\begin{align}\label{e:tcmarked}
t_c=\sup\{t\ge 0: \BP_t(|C^{(0,p)}|<\infty)=0\; \text{for $\BQ$-a.e.\ $p$}\}.
\end{align}
\end{remark}

The function $D_\varphi$ (defined by \eqref{evarphiintlocal}) takes the form
\begin{align*}
D_\varphi((x,p))=\iint \varphi(y,p,q)\,dy\,\BQ(dq),\quad (x,p)\in\R^d\times\BM,
\end{align*}
while $D^*_\varphi$ is given by
\begin{align*}
\esssup_{p\in\BM}\iint \varphi(y,p,q)\,dy\,\BQ(dq),
\end{align*}
where the essential supremum now refers to $\BQ$.
Similar comments apply to other characteristics introduced in Subsection \ref{sub:generalnotation}. 
For instance, we have
$c^*(t)=\esssup_{p\in\BM}c^{(0,p)}(t)$. We often write
\begin{align}
\bar{c}_n(t):=\int c^{(0,p)}_n(t)\,\BQ(dp),\quad \bar{c}_{\le n}(t):=\int c^{(0,p)}_n(t)\,\BQ(dp),\quad
\bar{c}(t):=\int c^{(0,p)}(t)\,\BQ(dp).
\end{align}
It is convenient to introduce a random element $Q_0$ of $\BM$ which is independent of $\xi$.
and has distribution $\BQ$. Then we denote by $C^{Q_0}$ the cluster of $(0,Q_0)$
in the RCM $\xi^{Q_0}$ arising from $\xi$ by adding independent connections between $(0,Q_0)$ and
the points from $\eta$. This
is the cluster of a {\em typical vertex} and we let $V^{Q_0}$ denote its vertex set.
Then we can write $\bar{c}(t)=\BE_t |V^{Q_0}|$ and similar for other quantities.

Define
\begin{align*}
\theta^{p}(t):=\BP_t\big(\big|C^{(0,p)}\big|=\infty\big), \quad t\ge 0, \ p\in\BM,
\end{align*}
as the probability that the cluster of a vertex $(0,p)\in\BX$ has
infinite size.  In the following, we use the $L^r(\BQ)$-norms to determine the size of functions. For each
$r\in\left[1,+\infty\right]$, we define the critical intensities
\begin{align}
    t_c^{(r)} &:= \inf\left\{t\geq 0: \|\theta(t)\|_r >0\right\},\label{e:percr}\\
    t_T^{(r)} &:= \inf\left\{t\geq 0: \|c(t)\|_r =\infty\right\}.\label{e:meancr}
\end{align}
It is clear that $t_c^{(r)}$ is $r$-independent, since it only matters if
$\theta^{p}(t)=0$ for $\BQ$-a.e.\ $p\in\BM$ or not. Therefore, all
the critical intensities $t_c^{(r)}$ coincide with the first critical
intensity $t_c$, which we defined for a general RCM. Note that
$t_T^{(r)} \leq t_c$ for all $r$. Moreover, $t^{(\infty)}_T$ coincides
with the second critical intensity $t_T$, which we have also defined for
a general RCM. From Jensen's inequality, it is clear that $t_T^{(r)}$ is non-increasing in
$r$. Therefore,
\begin{equation}
    0\leq t_T=t_T^{(\infty)} \leq t_T^{(r_2)} \leq t_T^{(r_1)} \leq t_T^{(1)} \leq t_c,\quad 1\leq r_1\leq r_2 \leq \infty.
\end{equation}

Let
\begin{align*}
\bar\theta(t):= \| \theta(t)\|_1=\int \theta^p(t)\,\BQ(dp), \quad t\ge 0,
\end{align*}
denote the percolation probability, that is, the probability that the cluster of a typical vertex is
infinite. Let $C_\infty$ be the set of all $\mu\in\bG$ such that
$\mu$ has an infinite cluster. 
It is easy to see that $\bar\theta(t)>0$ iff $\BP_t(\xi\in C_\infty)=1$.
In fact, if $t<t_c$ then $\BP_t(\xi\in C_\infty)=0$ and if $t>t_c$ then $\BP_t(\xi\in C_\infty)=1$.
Under a natural irreducibility assumption, $\xi$ can have at most one infinite cluster; 
see \cite{ChLast25}.

\begin{remark}\label{ex:classical}\rm In the unmarked case ($|\BM|=1$) the
connection function $\varphi$ is just a function on $\R^d$. 
Under the minimal assumption $0<\int\varphi(x)\,dx<\infty$
it was shown in \cite{Pen91} that $t_c\in(0,\infty)$. 
\end{remark}

The marked RCM is a very rich and flexible model of a spatial random graph.
We refer to \cite{CaicedoDickson24,ChLast25,GHMM22} for many examples.
For further reference, we provide just two of them here.

\begin{example}\label{ex:Gilbert}\rm Assume that $\BM=\R_+$ and
$\varphi(x,p,q)=\I\{\|x\|\le p+q\}$, where $\|x\|$ denotes the Euclidean norm of $x\in\R^d$.
The RCM $\xi$ is known as the spherical Boolean model or as {\em Gilbert graph} with
radius distribution $\BQ$; see e.g.\ \cite[Chapter 16]{LastPenrose18} for more detail.
We have that
\begin{align}
D_\varphi((0,r))=\kappa_d\int (r+s)^d\,\BQ(ds),\quad r\ge 0, 
\end{align}
where $\kappa_d$ stands for the volume of the unit ball. Therefore
our basic assumption \eqref{evarphiintlocal} is
equivalent with $\int r^d\,\BQ(dr)<\infty$.
This is the minimal assumption for having a reasonable model.
Under the additional assumption $\BQ\{0\}<1$ it was proved in \cite{Gou08,Hall85} that $t_c\in(0,\infty)$.
On the other hand, if $\BQ$ has unbounded support, then $D^*_\varphi=\infty$ and
$t_T=0$; see Lemma \ref{l:not_triv}.
\end{example}

\begin{example}\label{ex:weighted}\rm Assume that $\BM=(0,1)$ 
equipped with Lebesgue measure $\BQ$. Assume that
\begin{align*}
\varphi((x,p),(y,q))=\rho(g(p,q)\|x-y\|^d),
\end{align*}
for a profile function $\rho\colon [0,\infty)\to [0,1]$ and a kernel function $g\colon(0,1)\times(0,1)\to [0,\infty)$.
We assume that $m_\rho:=\int \rho(\|x\|^d)\,dx$ is positive and finite. This model was studied in \cite{GHMM22}
under the name {\em weight-dependent random connection model}, for decreasing profile function and increasing kernel function.
Then
\begin{align*}
\iint \varphi(x,p,q)\,dx\,dq=m_\rho\int g(p,q)^{-1}\,dq,
\end{align*}
and \eqref{evarphiintlocal} holds if $g(p,\cdot)^{-1}$ is integrable for each $p\in(0,1)$. 
This is the case in all examples studied in \cite{GHMM22}, where it is also asserted that $t_c<\infty$.
Sufficient conditions for $t_c\in(0,\infty)$ can also be found in \cite{CaicedoDickson24,DepW15}. 
\end{example}

\section{A spatial Markov property and stochastic ordering}\label{secMarkov}

We consider a general RCM $\xi$ based on a Poisson process $\eta$ on $\BX$ with diffuse intensity measure $\lambda$.
Given $\mu\in\bN$, we denote the RCM based on $\eta+\mu$ by $\xi^\mu$.
We first recall the {\em spatial Markov property}, as formulated in \cite{ChLast25}.

Let $\nu$ be a locally finite and diffuse
measure on $\BX$. We often write $\Pi_\nu$ for the distribution of a Poisson process with intensity measure $\nu$. 
We set $\bar\varphi:=1-\varphi$ and define for $x\in\BX$, $\nu\in\bN$
\begin{align}\label{e:barvarphi}
\bar\varphi(\nu,x):=\prod_{y\in\nu}\bar\varphi(x,y), \quad \varphi(\nu,x):=1-\bar\varphi(\nu,x), \
\quad \varphi_\lambda(\nu):=\int \varphi(\nu,x)\,\lambda(dx).
\end{align}
We recall our general convention $\varphi(\mu,x):=\varphi(V(\mu),x)$ 
and $\varphi_\lambda(\mu):=\varphi_\lambda(V(\mu))$ for $\mu\in\bG$.
Next we define two kernels from $\bN$ to $\BX$ and from $\bN\times\bN$ to $\BX$ (using the same notation $K_\nu$ for simplicity), by 
\begin{align}
K_\nu(\mu,dx):=\bar\varphi(\mu,x)\nu(dx), \quad K_\nu(\mu,\mu',dx):=\bar\varphi(\mu,x)\varphi(\mu',x)\nu(dx).
\end{align} 
Proposition \ref{p2.1} will provide an interpretation of this kernel. 
Denoting by $0$ the zero measure, we note that
\begin{align}
K_\nu(0,dx)=\nu(dx), \quad K_\nu(0,\mu',dx)=\varphi(\mu',x)\nu(dx), \quad K_\nu(\mu,0,dx)=0.
\end{align}
We write $K_\nu(\mu):=K_\nu(\mu,\cdot)$ and $K_\nu(\mu,\mu'):=K_\nu(\mu,\mu',\cdot)$.
Note that $K_\lambda(0,\mu,\BX)=\varphi_\lambda(\mu)$; see \eqref{e:barvarphi}.

The following spatial Markov property of the random graph $\xi^v$ 
was proved in \cite{ChLast25}.

\begin{proposition}\label{p2.1} The sequence $(V^v_{\le{n-1}},V^v_n)_{n\in\N_0}$ is a Markov
process with transition kernel
\begin{align*}
(\mu,\mu')\mapsto \int\I\{(\mu+\mu',\psi)\in\cdot\}\,\Pi_{K_\lambda(\mu, \mu')}(d\psi).
\end{align*}
\end{proposition}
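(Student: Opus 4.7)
The plan is to prove the proposition by running a breadth-first exploration of $C^v$ and carrying along, besides the revealed layers, the conditional law of everything not yet consulted: the locations of the remaining points of $\eta$ and the Bernoulli edge indicators attached to them. The Markov property and the precise form of the transition kernel then fall out of the thinning theorem for Poisson processes.

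First I would realise $\xi^v$ as follows. Sample $\eta\sim\Pi_\lambda$ and, conditionally on $\eta$, draw an independent family $(I_{x,y})$ of Bernoulli variables indexed by unordered pairs of distinct points of $\eta\cup\{v\}$, with success probabilities $\varphi(x,y)$; place an edge $\{x,y\}$ iff $I_{x,y}=1$. Starting from $V^v_0=\delta_v$, the exploration at step $n+1$ inspects the indicators $I_{x,y}$ for $x\in V^v_n$ and $y\in\eta\setminus V^v_{\le n}$, and declares $y\in V^v_{n+1}$ iff at least one such indicator equals $1$.

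I would then prove by induction on $n\ge 0$ the following invariant: conditionally on $(V^v_m)_{0\le m\le n}$,
\begin{itemize}
\item[(a)] $\eta\setminus V^v_{\le n}$ is a Poisson process with intensity measure $\bar\varphi(V^v_{\le n-1},\cdot)\,\lambda$,
\item[(b)] the indicators $I_{x,y}$ with $x\in V^v_n$ and $y\in\eta\setminus V^v_{\le n}$, together with those attached to pairs inside $\eta\setminus V^v_{\le n}$, are independent Bernoullis with parameters $\varphi(x,y)$, independent of (a).
\end{itemize}
The base case $n=0$ is immediate from $\bar\varphi(0,\cdot)\equiv 1$ and the fact that no $I_{v,y}$ has been touched. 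For the inductive step I would thin the Poisson process in (a) by the success probability $\varphi(V^v_n,y)=1-\bar\varphi(V^v_n,y)$. The thinning theorem produces two independent Poisson processes: $V^v_{n+1}$ with intensity
\begin{equation*}
\varphi(V^v_n,y)\,\bar\varphi(V^v_{\le n-1},y)\,\lambda(dy)=K_\lambda(V^v_{\le n-1},V^v_n,dy),
\end{equation*}
and $\eta\setminus V^v_{\le n+1}$ with intensity $\bar\varphi(V^v_{\le n},y)\,\lambda(dy)$. Since the indicators attached to pairs inside the new residual set have not been inspected, (a) and (b) are re-established at level $n+1$.

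Consequently the conditional law of $V^v_{n+1}$ given $(V^v_m)_{m\le n}$ depends on the past only through $(V^v_{\le n-1},V^v_n)$ and equals $\Pi_{K_\lambda(V^v_{\le n-1},V^v_n)}$; combined with the deterministic update $V^v_{\le n}=V^v_{\le n-1}+V^v_n$, this yields precisely the transition kernel in the statement. The main technical obstacle is the rigorous handling of the conditioning: one has to verify that the $\sigma$-algebra generated by $(V^v_m)_{m\le n}$ is exhausted by the indicators consulted during the first $n$ exploration steps together with the corresponding thinned point processes, leaving the later indicators (and the residual Poisson process) with the claimed conditional law. In \cite{ChLast25} this is carried out by expressing probabilities of the cylinder events $\{V^v_{\le n-1}=\mu,V^v_n=\mu'\}$ as integrals against factorial measures of $\eta$ and applying the multivariate Mecke equation \eqref{e:Mecke} to peel off one generation at a time, which converts the random index sets into ordinary integrals over $\lambda$ and reduces each inductive step to an explicit Bernoulli computation.
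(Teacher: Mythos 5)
The paper does not actually prove Proposition \ref{p2.1}: it imports the statement from \cite{ChLast25}, so there is no in-paper argument to compare against line by line. Your exploration-and-thinning induction is the natural proof and, as far as one can tell, essentially the route taken in the cited reference. Your invariant is the correct one: conditionally on the revealed generations, the unexplored points $\eta\setminus V^v_{\le n}$ (equivalently, the points not adjacent to $V^v_{\le n-1}$) form a Poisson process with intensity $\bar\varphi(V^v_{\le n-1},\cdot)\,\lambda$, the indicators towards $V^v_n$ and within the residual are still unexamined, and the conditional thinning with retention probability $\varphi(V^v_n,\cdot)$ then yields exactly the kernel $K_\lambda(V^v_{\le n-1},V^v_n)$ for $V^v_{n+1}$ together with the new residual intensity $\bar\varphi(V^v_{\le n},\cdot)\,\lambda$; combined with the deterministic update $V^v_{\le n}=V^v_{\le n-1}+V^v_n$ this is precisely the stated transition kernel, and the base case is right because $\lambda$ is diffuse, so $v\notin\eta$ a.s.

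The one substantive point you do not carry out is the one you flag yourself: the rigorous justification that conditioning on the $\sigma$-field generated by $(V^v_m)_{m\le n}$ really does leave the unexamined indicators conditionally i.i.d.\ Bernoulli and the residual conditionally Poisson, i.e.\ a conditional marking/thinning theorem along the exploration filtration (also, in your item (b) the phrase ``independent of (a)'' should be read as: the residual together with its untouched marks is an independently marked Poisson process, since the indicators are indexed by the residual points and cannot literally be independent of them). This verification is the actual technical content of the proposition; in \cite{ChLast25} it is done by writing the probabilities of cylinder events such as $\{V^v_{\le n-1}=\mu,\,V^v_n=\mu',\,V^v_{n+1}\in\cdot\}$ as expectations of sums over distinct points of $\eta$ and applying the multivariate Mecke equation \eqref{e:Mecke}, which turns the random index sets into $\lambda$-integrals and reduces the inductive step to an explicit Bernoulli factorization. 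So your proposal is a correct and complete plan at the level of ideas, with the right kernel and the right inductive structure, but to count as a full proof the inductive step needs that Mecke-type computation (or an equivalent conditional thinning argument) rather than a pointer to it.
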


In the following, we often abbreviate $K:=K_\lambda$.
Given $n\in\N$ we define a probability kernel $H_n$ from $\bN\times\bN$ to $\bN$ by 
\begin{align}\label{e:kernelHn}
H_n(\mu,\mu',\cdot):=\idotsint\I\{\psi_n\in\cdot\}\, \Pi_{K(\mu+\mu'+\psi_1+\cdots+\psi_{n-2},\psi_{n-1})}(d\psi_n)\cdots
\Pi_{K(\mu+\mu',\psi_1)}(d\psi_2)\Pi_{K(\mu,\mu')}(d\psi_1).
\end{align}
The Proposition \ref{p2.1} implies that
\begin{align}\label{distVn}
\BP(V^v_n\in\cdot)=H_n(0,\delta_v,\cdot),\quad v\in\BX.
\end{align}

\begin{corollary}\label{c3.23} For $\lambda$-a.e. $v\in\BX$ and $n\in\N_0$ under condition \eqref{evarphiintlocal} we have $\BP(|V^v_n|<\infty)=1$.
\end{corollary}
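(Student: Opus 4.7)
I would prove the corollary by induction on $n$, exploiting the spatial Markov description in Proposition \ref{p2.1}. The base case $n=0$ is trivial since $V^v_0=\delta_v$ has cardinality one. For the inductive step, Proposition \ref{p2.1} tells us that, conditional on $(V^v_{\le n-1},V^v_{n-1})=(\mu,\mu')$, the next generation $V^v_n$ is distributed as a Poisson process with intensity measure $K_\lambda(\mu,\mu')$. So to conclude $|V^v_n|<\infty$ almost surely, it suffices to show that the total mass $K_\lambda(V^v_{\le n-1},V^v_{n-1},\BX)$ is a.s.\ finite.

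The key estimate is to bound this total mass in terms of $D_\varphi$. Since $K_\lambda(\mu,\mu',dx)=\bar\varphi(\mu,x)\varphi(\mu',x)\lambda(dx)$ and $\bar\varphi\le 1$, we have
\begin{align*}
K_\lambda(\mu,\mu',\BX)\le \int \varphi(\mu',x)\,\lambda(dx)
=\int \Big(1-\prod_{y\in V(\mu')}\bar\varphi(x,y)\Big)\,\lambda(dx)
\le \sum_{y\in V(\mu')}D_\varphi(y),
\end{align*}
using $1-\prod_i(1-a_i)\le\sum_i a_i$ for $a_i\in[0,1]$. Hence, on the event that $|V^v_{n-1}|<\infty$ and every vertex $y$ of $V^v_{n-1}$ satisfies $D_\varphi(y)<\infty$, the Poisson intensity has finite total mass and $|V^v_n|<\infty$ a.s.

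The only subtle point is the $\lambda$-null exceptional set in \eqref{evarphiintlocal}. Let $B:=\{x\in\BX:D_\varphi(x)=\infty\}$; by assumption $\lambda(B)=0$. The vertices of $V^v_{n-1}$ are contained in $V(\xi^v)\subseteq\{v\}\cup\eta$. For $\lambda$-a.e.\ $v$ we have $v\notin B$, and since $\eta$ has intensity measure $\lambda$, $\eta(B)=0$ almost surely. Therefore, for $\lambda$-a.e.\ $v$, every vertex $y$ of $V^v_{n-1}$ a.s.\ satisfies $D_\varphi(y)<\infty$. Combining this with the inductive hypothesis $\BP(|V^v_{n-1}|<\infty)=1$ (valid for $\lambda$-a.e.\ $v$) closes the induction.

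The main (and only mild) obstacle is the bookkeeping of the $\lambda$-null set $B$: one needs to invoke both that a.e.\ vertex $v$ we start from is outside $B$ and that the Poisson process almost surely avoids $B$, in order for the iteration of the Markov kernel to remain inside the \enquote{good} set at every step.
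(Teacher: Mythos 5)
Your argument is correct and is essentially the paper's intended one: the corollary is derived directly from the spatial Markov property (Proposition \ref{p2.1}), conditioning generation by generation on the past, noting that the conditional law of the next generation is Poisson with total intensity bounded by $\sum_{y\in V^v_{n-1}}D_\varphi(y)$, which is a.s.\ finite by induction together with the fact that $v$ and the Poisson points avoid the $\lambda$-null set $\{D_\varphi=\infty\}$. The only blemish is a harmless indexing slip (the state of the Markov chain preceding $V^v_n$ is $(V^v_{\le n-2},V^v_{n-1})$, not $(V^v_{\le n-1},V^v_{n-1})$), which does not affect your estimate since you discard the first coordinate via $\bar\varphi\le 1$.
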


The following useful property of the kernel $K_\lambda$ can be easily proved by induction.

\begin{lemma}\label{l3.1} Let  $n\in\N$ and $\mu_0,\ldots,\mu_n\in\bN$.
Then
\begin{align*}
K_\lambda(0,\mu_0)+K_\lambda(\mu_0,\mu_1)+\cdots+K_\lambda(\mu_0+\cdots+\mu_{n-1},\mu_{n})
=K_\lambda(0,\mu_0+\cdots+\mu_{n}).
\end{align*}
\end{lemma}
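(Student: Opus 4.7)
The plan is to prove this by induction on $n$, with the inductive step reduced to the two-term identity
\[
K_\lambda(0,\mu) + K_\lambda(\mu,\mu') = K_\lambda(0,\mu+\mu'),
\]
which is the $n=1$ case (after a trivial relabeling). Everything ultimately rests on the multiplicativity $\bar\varphi(\mu+\mu',x)=\bar\varphi(\mu,x)\bar\varphi(\mu',x)$, which is immediate from the definition of $\bar\varphi(\nu,x)$ as a product over points of $\nu$ in \eqref{e:barvarphi}.

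Concretely, I would first verify the base case $n=1$ pointwise in $x$. Since $K_\lambda(0,\mu_0,dx)=\varphi(\mu_0,x)\,\lambda(dx)$ and $K_\lambda(\mu_0,\mu_1,dx)=\bar\varphi(\mu_0,x)\varphi(\mu_1,x)\,\lambda(dx)$, adding these densities gives
\[
\varphi(\mu_0,x)+\bar\varphi(\mu_0,x)\varphi(\mu_1,x) = \varphi(\mu_0,x)+\bar\varphi(\mu_0,x)-\bar\varphi(\mu_0,x)\bar\varphi(\mu_1,x) = 1-\bar\varphi(\mu_0+\mu_1,x)= \varphi(\mu_0+\mu_1,x),
\]
which is the density of $K_\lambda(0,\mu_0+\mu_1,dx)$. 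For the inductive step, assume the identity holds for $n-1$ and add $K_\lambda(\mu_0+\cdots+\mu_{n-1},\mu_n)$ to both sides; the left side then becomes $K_\lambda(0,\mu_0+\cdots+\mu_{n-1})+K_\lambda(\mu_0+\cdots+\mu_{n-1},\mu_n)$, and applying the base case with $\mu:=\mu_0+\cdots+\mu_{n-1}$ and $\mu':=\mu_n$ finishes the argument.

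Alternatively, one can give a one-shot telescoping proof: writing $\varphi(\mu_k,x)=1-\bar\varphi(\mu_k,x)$ and using multiplicativity, the density of the sum on the left is
\[
\sum_{k=0}^{n}\bar\varphi(\mu_0+\cdots+\mu_{k-1},x)\varphi(\mu_k,x)
=\sum_{k=0}^{n}\Bigl(\bar\varphi(\mu_0+\cdots+\mu_{k-1},x)-\bar\varphi(\mu_0+\cdots+\mu_{k},x)\Bigr),
\]
which telescopes (with the convention $\bar\varphi(0,x)=1$) to $1-\bar\varphi(\mu_0+\cdots+\mu_n,x)=\varphi(\mu_0+\cdots+\mu_n,x)$, i.e.\ the density of $K_\lambda(0,\mu_0+\cdots+\mu_n)$.

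There is no real obstacle here: the identity is a bookkeeping statement about the product structure of $\bar\varphi(\nu,x)$. The only mild care needed is to track the boundary term $K_\lambda(0,\mu_0)$ (which corresponds to the convention $\bar\varphi(0,x)=1$) so that the telescoping closes correctly; the inductive presentation handles this automatically and is probably the cleanest to write.
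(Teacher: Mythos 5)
Your argument is correct and matches the paper's approach: the paper gives no written proof, merely noting the identity "can be easily proved by induction," and your inductive step via the two-term density identity $\varphi(\mu_0,x)+\bar\varphi(\mu_0,x)\varphi(\mu_1,x)=\varphi(\mu_0+\mu_1,x)$ is exactly that intended argument. The telescoping variant is a fine equivalent packaging of the same multiplicativity of $\bar\varphi(\cdot,x)$.
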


Using a standard coupling argument, it is easy to establish the following facts.

\begin{proposition}\label{p2.2}
    Let $v\in\BX$ and assume that $\lambda_1\le \lambda_2$. Then 
    \begin{align}\label{Corder}
        V^v_{\lambda_1}\le_{st} V^v_{\lambda_2},
    \end{align}
    and for $n\in\N$
    \begin{align}\label{Clenorder}
        V^v_{\le n,\lambda_1}\le_{st} V^v_{\le n, \lambda_2}.
    \end{align}
\end{proposition}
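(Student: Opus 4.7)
The plan is a standard monotone coupling. Since $\lambda_1\le\lambda_2$ are locally finite and diffuse, the difference $\lambda_3:=\lambda_2-\lambda_1$ is also locally finite and diffuse. On a common probability space, take independent Poisson processes $\eta^{(1)}$ and $\eta^{(3)}$ with intensity measures $\lambda_1$ and $\lambda_3$; by superposition, $\eta^{(2)}:=\eta^{(1)}+\eta^{(3)}$ is a Poisson process with intensity measure $\lambda_2$. Enumerate $\eta^{(2)}=\sum_m\delta_{X_m}$ in such a way that the points of $\eta^{(1)}$ appear as a sub-enumeration (for instance, by listing them first). Draw a symmetric double array $(Z_{m,n})$ of independent uniform $[0,1]$ random variables together with an independent sequence $(Z^v_m)$, all independent of the point processes. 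Build $\xi^v_{\lambda_2}$ from $\eta^{(2)}$, $(Z_{m,n})$ and $(Z^v_m)$ as in Section~\ref{s:definitions}, and build $\xi^v_{\lambda_1}$ from $\eta^{(1)}$ using the restriction of $(Z_{m,n})$ and $(Z^v_m)$ to indices labeling points of $\eta^{(1)}$.

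One then verifies two things. First, the marginals are correct: conditioning on $\eta^{(1)}$, the sub-array of $(Z_{m,n})$ indexed by $\eta^{(1)}$ is still a symmetric array of i.i.d.\ uniforms independent of $\eta^{(3)}$, so the restricted graph really is an RCM on $\eta^{(1)}$, and similarly $\xi^v_{\lambda_2}$ has its required distribution. Second, by construction every edge of $\xi^v_{\lambda_1}$ is declared by exactly the same uniform comparison $Z_{m,n}\le\varphi(X_m,X_n)$ in $\xi^v_{\lambda_2}$, so that $\xi^v_{\lambda_1}$ is a subgraph of $\xi^v_{\lambda_2}$ in the sense of Section~\ref{s:definitions}.

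In particular, any path from $v$ of length at most $n$ in $\xi^v_{\lambda_1}$ is also a path of length at most $n$ in $\xi^v_{\lambda_2}$, and any finite path from $v$ in $\xi^v_{\lambda_1}$ is a path in $\xi^v_{\lambda_2}$. Consequently,
\begin{align*}
V^v_{\le n,\lambda_1}\le V^v_{\le n,\lambda_2}\quad\text{and}\quad V^v_{\lambda_1}\le V^v_{\lambda_2}
\end{align*}
as counting measures on $\BX$, almost surely under the coupling. Passing to expectations against arbitrary increasing measurable functionals of counting measures yields the stochastic orderings \eqref{Clenorder} and \eqref{Corder}.

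The only delicate point is the bookkeeping of the enumeration and the shared uniform variables, which has to be carried out so that the restricted graph simultaneously retains its unconditional RCM law and is a subgraph of the larger RCM; once this coupling has been set up, the conclusion is immediate, which is why the authors refer to the argument as standard.
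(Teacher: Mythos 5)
Your proposal is correct and is essentially the paper's own argument: the paper constructs the monotone coupling by independently thinning $\eta_{\lambda_2}$ down to $\eta_{\lambda_1}$, while you build it in the opposite direction by superposing an independent Poisson process of intensity $\lambda_2-\lambda_1$; both yield the same coupling in which $\xi^v_{\lambda_1}$ is a subgraph of $\xi^v_{\lambda_2}$ and hence $V^v_{\le n,\lambda_1}\le V^v_{\le n,\lambda_2}$ and $V^v_{\lambda_1}\le V^v_{\lambda_2}$ almost surely. Your write-up merely spells out the marginal-correctness and bookkeeping details that the paper leaves implicit.
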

\begin{proof}
    Construct $\xi^v_{\lambda_2}$ and then $\xi^v_{\lambda_1}$ by independent thinning. 
\end{proof}

The Proposition \ref{p2.2} implies monotonicity in the measure $\lambda$ for
$V^v_{\le n}$ and $V^v$. We can state similar property for the first and second generations, but things are getting tricky for higher
generations. This is an open question.

\begin{proposition}\label{p2.2+}
Let $v\in\BX$, $m\in\{1,2\}$ and assume that $\lambda_1\le \lambda_2$. Then
    \begin{align}\label{Cmorder}
        V^v_{m,\lambda_1}\le_{st}  V^v_{m,\lambda_2}.
    \end{align}
\end{proposition}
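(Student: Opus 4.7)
The plan is to upgrade the thinning coupling used in Proposition \ref{p2.2} so that containment is respected generation-by-generation, at least through the second generation. Write $\lambda_2=\lambda_1+(\lambda_2-\lambda_1)$ and realize $\eta_2$ as $\eta_1+\eta'$, where $\eta_1$ and $\eta'$ are independent Poisson processes with intensity measures $\lambda_1$ and $\lambda_2-\lambda_1$. I would construct both $\xi^v_{\lambda_1}$ and $\xi^v_{\lambda_2}$ from a single array of $[0,1]$-uniforms so that $\xi^v_{\lambda_1}$ is the subgraph of $\xi^v_{\lambda_2}$ induced by $\eta_1+\delta_v$. In particular, the status of the edge between $v$ and any point of $\eta_1$ is identical in both graphs.

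For $m=1$, the argument is immediate: any $y\in V^v_{1,\lambda_1}$ lies in $\eta_1\subseteq\eta_2$ and is linked to $v$ by the shared uniform, so $y\in V^v_{1,\lambda_2}$, and the almost-sure inclusion $V^v_{1,\lambda_1}\subseteq V^v_{1,\lambda_2}$ gives the stochastic ordering \eqref{Cmorder} for $m=1$.

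For $m=2$, pick $y\in V^v_{2,\lambda_1}$. Then $y\in\eta_1\subseteq\eta_2$; $y$ is not adjacent to $v$ in $\xi^v_{\lambda_2}$ because the single connection variable is shared with $\xi^v_{\lambda_1}$; and $y$ has some neighbor $z\in V^v_{1,\lambda_1}$ in $\xi^v_{\lambda_1}$, hence in $\xi^v_{\lambda_2}$ by the coupling. By the $m=1$ step, $z\in V^v_{1,\lambda_2}$, which gives a path of length $2$ from $v$ to $y$ in $\xi^v_{\lambda_2}$. Combined with $y\notin V^v_{1,\lambda_2}\cup\{v\}$, this forces $d_{\xi^v_{\lambda_2}}(y,v)=2$ and hence $y\in V^v_{2,\lambda_2}$.

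The main obstacle, and the reason the statement is limited to $m\in\{1,2\}$, is that newly added points in $\eta'$ can create shortcuts. A point $y$ at graph distance $m\ge 3$ from $v$ in $\xi^v_{\lambda_1}$ may be at strictly smaller distance in $\xi^v_{\lambda_2}$ via a path crossing $\eta'$, and would then leave the $m$-th generation. For $m\in\{1,2\}$, a shortcut would require $y$ to be directly adjacent to $v$ in $\xi^v_{\lambda_2}$, which is ruled out because that one connection variable is inherited from $\xi^v_{\lambda_1}$. This is what makes the first two generations special and why monotonicity beyond them remains open.
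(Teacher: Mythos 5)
Your proof is correct, and it takes a genuinely different route from the paper for the key case $m=2$. The paper also starts from the thinning/superposition coupling to get $V^v_{1,\lambda_1}\le V^v_{1,\lambda_2}$ a.s., but then handles the second generation via the spatial Markov property (Proposition \ref{p2.1}): conditionally on the first generation, $V^v_{2,\lambda_i}$ is a Poisson process with intensity $K_{\lambda_i}(\delta_v,V^v_{1,\lambda_i})$, and the claim follows from monotonicity of $K_\lambda$ in $\lambda$ and in the second argument together with stochastic monotonicity of Poisson processes in their intensity measure. You instead stay entirely inside one explicit coupling (the induced subgraph of $\xi^v_{\lambda_2}$ on $\eta_1+\delta_v$) and observe that a vertex at distance $2$ in the smaller graph cannot be pulled closer in the larger graph, because the only possible shortcut is the direct edge to $v$, whose status is shared between the two graphs; this yields the stronger statement that $V^v_{m,\lambda_1}\subseteq V^v_{m,\lambda_2}$ almost surely, simultaneously for $m=1,2$, under a single coupling, and it is more elementary in that it never invokes the conditional Poisson structure. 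What the paper's kernel argument buys in exchange is consistency with the machinery ($K_\lambda$, Proposition \ref{p2.1}) used throughout Section 5, and a slightly different view of why the argument stops at $m=2$: in their formulation the obstruction is that for higher generations the kernel's first argument (the already explored generations) also grows with $\lambda$, and $K_\lambda$ is decreasing in that argument, while in yours it is the appearance of shortcuts through the extra points of $\eta'$ — two descriptions of the same phenomenon, and your closing paragraph identifies it accurately.
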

\begin{proof}
Construct $\xi^v_{\lambda_2}$ and then $\xi^v_{\lambda_1}$ by
independent thinning. Then
$ V^v_{1,\lambda_1}\le V^v_{1,\lambda_2}$ a.s.. 
The Proposition \ref{p2.1} shows that, given
$V^v_{1,\lambda_2}$, the conditional distribution of
$V^v_{2,\lambda_2}$ is that of a Poisson process with an intensity measure $K_{\lambda_2}(\delta_v, V^v_{1,\lambda_2})$. The statement follows from the facts that a Poisson process stochastically increases in
the intensity measure and that $K_\lambda$ is increasing in $\lambda$ and the second coordinate, i.e.
  $K_{\lambda_2}(\delta_v, V^v_{1,\lambda_2})\ge
  K_{\lambda_1}(\delta_v, V^v_{1,\lambda_1})$.
\end{proof}

\begin{lemma}\label{l:st_ineq_V_n_mu} Suppose that
$\mu_1,\mu_2 \in\bN_{<\infty}$ have disjoint support and set
$\mu:=\mu_1+\mu_2$. Assume that $\xi_{\mu_1}$ and $\xi_{\mu_2}$ are independent RCMs with distributions $\xi^{\mu_1}$ and $\xi^{\mu_2}$, respectively.
Then
\begin{align}\label{ineq:st_V_n_mu0}
V^\mu_n(\xi^\mu)\le_{st} V_n^{\mu_1}(\xi_{\mu_1})+ V_n^{\mu_2}(\xi_{\mu_2}),\quad n\in\N_0,\\
\label{ineq:st_V_n_mu1}
V^\mu_{\le n}(\xi^\mu)\le_{st} V_{\le n}^{\mu_1}(\xi_{\mu_1})+ V_{\le n}^{\mu_2}(\xi_{\mu_2}),\quad n\in\N_0.
\end{align}
\end{lemma}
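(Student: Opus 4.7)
The plan is to argue by induction on $n$, combining the spatial Markov property (Proposition~\ref{p2.1}) with a pointwise subadditivity estimate for the kernel $K_\lambda$ and a Poisson thinning coupling; the statement for $V^\mu_{\le n}$ follows by telescoping the generation-by-generation bounds.

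The first step is to record the pointwise subadditivity
\[K_\lambda(\nu_1+\nu_2,\nu_1'+\nu_2',dx)\le K_\lambda(\nu_1,\nu_1',dx)+K_\lambda(\nu_2,\nu_2',dx),\qquad \nu_1,\nu_2,\nu_1',\nu_2'\in\bN,\]
as measures on $\BX$. Writing $a_i:=\bar\varphi(\nu_i,x)$, $c_i:=\bar\varphi(\nu_i',x)$, and using the identity $\bar\varphi(\nu_1+\nu_2,\cdot)=\bar\varphi(\nu_1,\cdot)\bar\varphi(\nu_2,\cdot)$, this reduces to the elementary inequality $a_1a_2(1-c_1c_2)\le a_1(1-c_1)+a_2(1-c_2)$ on $[0,1]^4$; since the left- and right-hand sides are linear in each coordinate separately, it suffices to verify it at the sixteen vertices of the unit cube.

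The base case $n=0$ holds with equality: disjointness of supports gives $V^\mu_0(\xi^\mu)=V(\mu_1)+V(\mu_2)=V^{\mu_1}_0(\xi_{\mu_1})+V^{\mu_2}_0(\xi_{\mu_2})$, irrespective of how the two independent RCMs are coupled. For the inductive step, I would construct the $(n+1)$-st generation via Proposition~\ref{p2.1}: conditionally on their respective pasts, $V^\mu_{n+1}$ is Poisson with intensity $K_\lambda(V^\mu_{\le n},V^\mu_n)$, while the two independent $V^{\mu_i}_{n+1}$ are Poisson with intensities $K_\lambda(V^{\mu_i}_{\le n},V^{\mu_i}_n)$. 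A suitable decomposition combined with the subadditivity estimate above then gives a pointwise bound of the joint intensity by the sum of the separate intensities, and a Poisson thinning argument realizes $V^\mu_{n+1}$ as an almost-sure submeasure of $V^{\mu_1}_{n+1}+V^{\mu_2}_{n+1}$, preserving the independence of $\xi_{\mu_1}$ and $\xi_{\mu_2}$.

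The main obstacle will be propagating the inductive hypothesis in a form amenable to the subadditivity: the naive bound $V^\mu_{\le n}\le V^{\mu_1}_{\le n}+V^{\mu_2}_{\le n}$ by itself is insufficient, because $K_\lambda(\cdot,\cdot,dx)$ is \emph{decreasing} in its first argument and the bound therefore runs in the wrong direction. The remedy is to strengthen the inductive hypothesis to carry a concrete decomposition $V^\mu_{\le n}=A_n+B_n$ with $A_n\le V^{\mu_1}_{\le n}$ and $B_n\le V^{\mu_2}_{\le n}$, obtained by labeling each vertex of the joint cluster according to which of $\mu_1,\mu_2$ the shortest BFS path in $\xi^\mu$ originates from, together with the inherited decomposition $V^\mu_n=A_n'+B_n'$; subadditivity then applies directly to $K_\lambda(A_n+B_n,A_n'+B_n')$, and the Poisson thinning step must be arranged so as to reproduce the corresponding decomposition at level $n+1$.
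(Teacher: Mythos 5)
Your kernel subadditivity inequality is correct, and the base case is fine, but the inductive step contains a genuine gap at the point where you invoke "a Poisson thinning argument". To thin, you need that, conditionally on everything constructed up to step $n$, the intensity of the branch-$1$ piece of the joint offspring is dominated pointwise by the conditional intensity of the independent model's next generation, i.e.\ $K_\lambda(A_n,A_n',dx)\le K_\lambda\big(V^{\mu_1}_{\le n-1}(\xi_{\mu_1}),V^{\mu_1}_{n}(\xi_{\mu_1}),dx\big)$ (note also the off-by-one: by Proposition \ref{p2.1} the first argument of the kernel is the cluster up to the \emph{previous} generation, not $V_{\le n}$). Your strengthened hypothesis gives $A_n\le V^{\mu_1}_{\le n-1}(\xi_{\mu_1})$ and $A_n'\le V^{\mu_1}_{n}(\xi_{\mu_1})$, but since $K_\lambda$ is \emph{decreasing} in its first argument, the inclusion $A_n\le V^{\mu_1}_{\le n-1}(\xi_{\mu_1})$ yields $\bar\varphi(A_n,x)\ge\bar\varphi(V^{\mu_1}_{\le n-1}(\xi_{\mu_1}),x)$, which runs against you; the required domination genuinely fails. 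For instance, if there is $y\in V^{\mu_1}_{\le n-1}(\xi_{\mu_1})\setminus A_n$ with $\varphi(y,x)$ close to $1$ while no point of $A_n$ interacts with $x$, then $K_\lambda(A_n,A_n',dx)$ can exceed $K_\lambda(V^{\mu_1}_{\le n-1}(\xi_{\mu_1}),V^{\mu_1}_n(\xi_{\mu_1}),dx)$ by an arbitrarily large factor, so the joint branch-$1$ offspring cannot be realized as a thinning of the independent model's offspring. In other words, carrying the decomposition $V^\mu_{\le n}=A_n+B_n$ only helps to split the joint intensity (your subadditivity); it does not resolve the ``wrong direction'' obstacle you yourself identified, and the generation-by-generation pathwise coupling you propose breaks down.

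The paper avoids this by never coupling the two sides pathwise. It splits each joint generation \emph{exactly} into two conditionally independent Poisson parts $V_{k,1}$ (points connected to the branch-$1$ part of the previous generation) and $V_{k,2}$ (connected to the branch-$2$ part but not to the branch-$1$ part) — this exact splitting is the identity $K_\lambda(\rho,\nu_1'+\nu_2')=K_\lambda(\rho,\nu_1')+K_\lambda(\rho+\nu_1',\nu_2')$ hiding behind your subadditivity — and then writes $\BE f(V^\mu_n(\xi^\mu))$, for increasing $f$, as an iterated Poisson integral which it compares with the iterated-integral representation of $\BE f\big(V^{\mu_1}_n(\xi_{\mu_1})+V^{\mu_2}_n(\xi_{\mu_2})\big)$. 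The crucial point is that the two iterated integrals are compared kernel by kernel at the \emph{same} integration variables $\psi^i_k$: the second arguments coincide, so only the first-argument monotonicity is used, and in the favourable direction (the joint kernels carry the larger first argument $\mu+\psi^1_1+\psi^2_1+\cdots$, hence are pointwise smaller), while the second-argument monotonicity merely keeps the integrands increasing so the replacement can be iterated from the innermost integral outward. To repair your argument you should abandon the thinning coupling and instead prove $\BE f(V^\mu_n(\xi^\mu))\le \BE f\big(V^{\mu_1}_n(\xi_{\mu_1})+V^{\mu_2}_n(\xi_{\mu_2})\big)$ for increasing $f$ by exactly this comparison of intensity kernels inside iterated integrals.
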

\begin{proof}
Define $V_{1,1}(\xi^\mu):=\{ x\in\eta : x \sim \mu_1 \}$ as the set
of points from $\eta$ which are connected to $\mu_1$ in $\xi^\mu$, and
$V_{1,2}(\xi^\mu):=\{ x\in\eta : x \sim \mu_2, \ x\nsim \mu_1  \}$. 
Then $V_{1,1}(\xi^\mu)$ and $V_{1,2}(\xi^\mu)$ are independent Poisson processes, and
$V_1^\mu(\xi^\mu)= V_{1,1}(\xi^\mu)+V_{1,2}(\xi^\mu)$. 
For $k\in\N$ we define
$V_{k,1}(\xi^\mu)$ as the set of points from $V^\mu_k(\xi^\mu)$ which are
connected to $V_{k-1,1}(\xi^\mu)$ and $V_{k,2}(\xi^\mu)$ 
as the set of points from $V^\mu_k(\xi^\mu)$ which are connected to
$V_{k-1,2}(\xi^\mu)$ but not to $V_{k-1,1}(\xi^\mu)$. 
Then   $V_k^\mu(\xi^\mu)= V_{k,1}(\xi^\mu)+V_{k,2}(\xi^\mu)$. Moreover,
$V_{k,1}(\xi^\mu)$ and $V_{k,2}(\xi^\mu)$ are conditionally independent Poisson
processes given $C^\mu_{\le k-1}(\xi^\mu)$. 
Let $f\colon \bN \to \R_+$  be measurable and $n\in\N$. Then
\begin{align*}
&\BE f(V^\mu_n(\xi^\mu)) = 
\BE\, \BE \big[ f\left(V^\mu_{n}(\xi^\mu)\right) \mid C^\mu_{\le n-1}(\xi^\mu)\big] \\
 &=\BE \iint  f\left(\psi^1_n+ \psi^2_n\right) \,  
\Pi_{K(V^\mu_{\le n-2}(\xi^\mu)+V_{n-1,1}(\xi^\mu),V_{n-1,2}(\xi^\mu))} (d \psi^2_n)
\,\Pi_{K(V^\mu_{\le n-2}(\xi^\mu),V_{n-1,1}(\xi^\mu))} (d \psi^1_n).
\end{align*}
Recursively, we get
\begin{align}\notag\label{eq:mean_V_n_mu} 
&\BE f(V^\mu_n(\xi^\mu))\\ \notag 
&=\idotsint  f\big(\psi^1_n+\psi^2_n\big) \, \Pi_{K(\mu+\psi^1_1+\psi^2_1+\cdots+\psi^1_{n-2}+\psi^2_{n-2}  +\psi^1_{n-1},\psi^2_{n-1})} (d \psi^2_n)
\, \Pi_{K(\mu+\psi^1_1+\psi^2_1+\cdots+\psi^1_{n-2}+\psi^2_{n-2},\psi^1_{n-1})} (d \psi^1_n) \\ 
        &\qquad \qquad\cdots \Pi_{K(\mu+\psi^1_1,\psi^2_1)}(d \psi^2_2) \,  \Pi_{K(\mu,\psi^1_1)} (d \psi^2_1)
        \, \Pi_{K(\mu_1,\mu_2)} (d \psi^2_1) \, \Pi_{K(0,\mu_1)} (d \psi^1_1). 
    \end{align}
On the other hand, we obtain from \eqref{distVn} and the independence of $\xi_{\mu_1}$ and $\xi_{\mu_2}$ that
\begin{align}\label{eq:mean_V_n_mu12}
\BE f\left( V^{\mu_1}_n(\xi_{\mu_1})+V^{\mu_2}_n(\xi_{\mu_2})\right)
=\iint f\big(\psi^1_n+\psi^2_n\big) \,H_n(0,\mu_1,d\psi^1_n)\,H_n(0,\mu_2,d\psi^2_n).
\end{align}
Assume now that $f$ increases. We compare \eqref{eq:mean_V_n_mu} and
\eqref{eq:mean_V_n_mu12} taking into account two facts. 
First, Poisson processes increase stochastically in the intensity measure. Second,
$K$ decreases in the first argument and increases in
the second. This implies
\begin{align*}
\BE f(V^\mu_n(\xi^\mu))\le  \BE f\left( V^{\mu_1}_n(\xi_{\mu_1})+V^{\mu_2}_n(\xi_{\mu_2})\right), 
\end{align*}
that is \eqref{ineq:st_V_n_mu0}. The proof of \eqref{ineq:st_V_n_mu1} is the same, up to the
fact that the argument of the function $f$ has to be suitably modified.
\end{proof}

\begin{corollary}\label{c:st_ineq_V_n_mu}
Let $\mu\in\bN_{<\infty}$ and $n\in\N_0$. Then
\begin{align*}
V^\mu_n(\xi^\mu)&\le_{st} \int V_n^x(\xi_x) \, \mu(d x),\\
V^\mu_{\le n}(\xi^\mu)&\le_{st} \int V_{\le n}^x(\xi_x) \, \mu(d x).
\end{align*}
where $\xi_x$, $x\in\mu$, are independent and $\BP(\xi_x\in\cdot)=\BP(\xi^x\in\cdot)$.
\end{corollary}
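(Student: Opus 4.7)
My plan is to prove both stochastic dominations simultaneously by induction on $k:=|\mu|$, the number of atoms of $\mu$, with Lemma \ref{l:st_ineq_V_n_mu} providing the two-point splitting that drives the induction. The base case $k=1$ is immediate: if $\mu=\delta_x$, then $V^\mu_n(\xi^\mu)=V^x_n(\xi^x)$ has the same distribution as $V^x_n(\xi_x)=\int V^y_n(\xi_y)\,\mu(dy)$ by the defining property of $\xi_x$, so equality in distribution (and hence $\le_{st}$) holds; the same is true for $V^\mu_{\le n}$.

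For the inductive step, assume the claim has been shown for all $\nu\in\bN_{<\infty}$ with $|\nu|<k$ and write $\mu=\delta_{x_1}+\cdots+\delta_{x_k}$ with pairwise distinct $x_i$. Set $\mu_1:=\delta_{x_1}$ and $\mu_2:=\delta_{x_2}+\cdots+\delta_{x_k}$; these have disjoint supports and $|\mu_2|=k-1$. Lemma \ref{l:st_ineq_V_n_mu}, applied to this decomposition, yields
\[
V^\mu_n(\xi^\mu)\le_{st} V^{\mu_1}_n(\xi_{\mu_1})+V^{\mu_2}_n(\xi_{\mu_2}),
\]
where $\xi_{\mu_1},\xi_{\mu_2}$ are independent with $\xi_{\mu_j}\overset{d}{=}\xi^{\mu_j}$. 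By the induction hypothesis applied to $\mu_2$, there exist independent $\xi_{x_2},\ldots,\xi_{x_k}$ (with $\xi_{x_i}\overset{d}{=}\xi^{x_i}$) satisfying $V^{\mu_2}_n(\xi_{\mu_2})\le_{st}\sum_{i=2}^k V^{x_i}_n(\xi_{x_i})$. Setting $\xi_{x_1}:=\xi_{\mu_1}$ and realizing all these couplings on a product probability space so that $\xi_{x_1}$ remains independent of $\xi_{x_2},\ldots,\xi_{x_k}$, the two inequalities chain to give
\[
V^\mu_n(\xi^\mu)\le_{st}\sum_{i=1}^k V^{x_i}_n(\xi_{x_i})=\int V^x_n(\xi_x)\,\mu(dx),
\]
which closes the induction. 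The proof of the second inequality is verbatim the same, using \eqref{ineq:st_V_n_mu1} in place of \eqref{ineq:st_V_n_mu0} at the splitting step.

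The only point requiring a little care will be the chaining of couplings in the inductive step: one has to make sure that \emph{all} $\xi_{x_i}$ end up jointly independent, rather than merely pairwise. This is routine, because the coupling delivered by Lemma \ref{l:st_ineq_V_n_mu} already lives on a product space in which $\xi_{\mu_1}$ and $\xi_{\mu_2}$ are independent, and the inductive coupling that dominates $V^{\mu_2}_n(\xi_{\mu_2})$ can be built on an auxiliary independent factor, so that taking $\xi_{x_1}=\xi_{\mu_1}$ preserves joint independence with $\xi_{x_2},\ldots,\xi_{x_k}$. No ideas beyond those already invoked inside Lemma \ref{l:st_ineq_V_n_mu} are needed.
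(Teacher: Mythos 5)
Your proof is correct and follows essentially the same route as the paper: the paper also proves the corollary by iterating Lemma \ref{l:st_ineq_V_n_mu} over a one-point split (there $\mu_1=\delta_{x_1}+\cdots+\delta_{x_i}$, $\mu_2=\delta_{x_{i+1}}$, which is just your induction run in the other direction), together with the same standard preservation of $\le_{st}$ under independent superposition that you spell out in your final paragraph.
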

\begin{proof} Write $\mu=\delta_{x_1}+\cdots+\delta_{x_m}$ and apply
Lemma \ref{l:st_ineq_V_n_mu} inductively with 
$\mu_1=\delta_{x_1}+\cdots+\delta_{x_i}$ and
$\mu_2=\delta_{x_{i+1}}$ for $i=1,\ldots,m-1$.
\end{proof}

In the following, we consider two families $\{\xi_x^{(n)}\}_{x\in V^v_n}$ 
and $\{\xi^{[n]}_x\}_{x\in V^v_n}$ of random graphs with the following properties: 
\begin{itemize}
\item[1.] Given $C_{\le n}^v$, $\xi_x^{(n)}$ and $\xi^{[n]}_x$ are RCM's driven
  by $\eta_n+\delta_x$ and $\eta+\delta_x$, where $\eta_n$ and $\eta$
  are Poisson processes with intensity measures
  $K_\lambda(V^v_{\le n-1})$ and $\lambda$ respectively.
    \item[2.]  The members of the families are conditionally independent, given $C_{\le n}^v$. 
\end{itemize}

\begin{proposition}\label{p2.3} Let $v\in\BX$ and $n, m\in\N_0$. Then 
\begin{align*}
    V_{n+m}^v \le_{st}  \int V^x_m(\xi_x^{(n)})\,V^v_n(dx).
\end{align*}
\end{proposition}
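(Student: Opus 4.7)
The plan is to condition on $C^v_{\le n}$ and re-express the subsequent generations of $\xi^v$ as the generations of a fresh cluster growth started from the finite vertex set $V^v_n$ in an auxiliary RCM. Once this is done, Corollary \ref{c:st_ineq_V_n_mu} (which applies verbatim to any diffuse locally finite intensity) splits the growth from $V^v_n$ into conditionally independent single-vertex contributions, which will turn out to be distributed exactly as $\xi^{(n)}_x$.

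After conditioning on $C^v_{\le n}$, I would introduce
\begin{align*}
\lambda_n(dx):=K_\lambda(V^v_{\le n-1},dx)=\bar\varphi(V^v_{\le n-1},x)\,\lambda(dx)
\end{align*}
and an RCM $\xi'$, conditionally independent of $C^v_{\le n}$, based on a Poisson process of intensity $\lambda_n$; write $\xi'^{V^v_n}$ for its augmentation by the deterministic vertex set $V^v_n$. The elementary factorization $\bar\varphi(V^v_{\le n-1}+\mu,x)=\bar\varphi(V^v_{\le n-1},x)\bar\varphi(\mu,x)$ gives
\begin{align*}
K_\lambda(V^v_{\le n-1}+\mu,\mu')=K_{\lambda_n}(\mu,\mu'),
\end{align*}
so the transition kernel of the Markov chain from Proposition \ref{p2.1}, continued past the state $(V^v_{\le n-1},V^v_n)$ reached at time $n$, coincides with that of the chain $(V^{V^v_n}_{\le k-1}(\xi'^{V^v_n}),V^{V^v_n}_k(\xi'^{V^v_n}))_{k\ge 0}$ started from $(0,V^v_n)$, once the frozen layer $V^v_{\le n-1}$ is absorbed into $\lambda_n$. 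A short induction on $k$ then yields, conditionally on $C^v_{\le n}$,
\begin{align*}
V^v_{n+m}\overset{d}{=}V^{V^v_n}_m(\xi'^{V^v_n}),\qquad m\in\N_0.
\end{align*}

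Applying Corollary \ref{c:st_ineq_V_n_mu} to the RCM $\xi'$ with $\mu:=V^v_n\in\bN_{<\infty}$ (conditionally on $C^v_{\le n}$) then yields
\begin{align*}
V^{V^v_n}_m(\xi'^{V^v_n})\le_{st}\int V^x_m(\xi'_x)\,V^v_n(dx),
\end{align*}
where the $\xi'_x$ are conditionally independent copies distributed as $\xi'^x$, that is, as an RCM on a Poisson process of intensity $\lambda_n=K_\lambda(V^v_{\le n-1})$ augmented by $\delta_x$. But this is precisely the definition of $\xi^{(n)}_x$. Combining the previous two displays and removing the conditioning on $C^v_{\le n}$ delivers the proposition.

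The main obstacle is the identification step in the second paragraph: one must check that absorbing the frozen initial segment $V^v_{\le n-1}$ into the thinning factor $\bar\varphi(V^v_{\le n-1},\cdot)$ is consistent with \emph{every} subsequent transition of the Markov chain, not just the first. Multiplicativity of $\bar\varphi$ over disjoint vertex sets handles this cleanly, but it is the only place where the specific form of $K_\lambda$ is exploited; the remainder is a routine combination of the spatial Markov property and Corollary \ref{c:st_ineq_V_n_mu}.
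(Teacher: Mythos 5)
Your proposal is correct and follows essentially the same route as the paper: condition on $C^v_{\le n}$, use the spatial Markov property (Proposition \ref{p2.1}) to identify the continuation as growth from $V^v_n$ in an RCM whose Poisson background has intensity $K_\lambda(V^v_{\le n-1})$, and then apply Corollary \ref{c:st_ineq_V_n_mu}. The only difference is that you spell out, via the multiplicativity of $\bar\varphi$, the identification that the paper simply reads off from Proposition \ref{p2.1}; this is a harmless elaboration, not a new argument.
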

\begin{proof} Let $f\colon \bN \to \R_+$ be measurable and increasing.
By Proposition \ref{p2.1} the conditional distribution of $V_{n+m}^v$ 
given $C_{\le n}^v$ is that of $V_m^{V^v_n}(\xi^{(V^v_{\le n-1}),V^v_n})$, where $\xi^{(V^v_{\le n-1}),V^v_n}$ 
is a RCM based on $\eta_n+V_n^v$ and $\eta_n$ 
is a Poisson process with intensity measure $K_\lambda(V^v_{\le n-1})$.  Hence, 
by Corollary \ref{c:st_ineq_V_n_mu}
\begin{align*}
\BE f(V^v_{n+m}) = \BE\, \BE\big[f(V^v_{n+m}) \mid C^v_{\le n}\big] 
= \BE\, \BE \Big[f(V_m^{V^v_n}(\xi^{(V^v_{\le n-1}),V^v_n})) \mid C^v_{\le n}\Big] 
\le  \BE f\left(  \int V^x_m(\xi_x^{(n)})\,V^v_n(dx) \right).
\end{align*}
The assertion follows.
\end{proof}

For $v\in\BX$ and $n\in\N_0$ we define the {\em intensity measures} $\Lambda^v_n$ 
of $V^v_n$ by $\Lambda^v_n(B):=\BE V^v_n(B)$ for $B\in\cX$.  

\begin{proposition}\label{p2.3+} Let $v\in\BX$ and $n\in\N_0$, $m\in\{1,2\}$. Then 
\begin{align*}
V_{n+m}^v \le_{st}  \int V^x_m(\xi^{[n]}_x)\,V^v_n(dx).
\end{align*}
Moreover, we get
\begin{align*}
\Lambda^v_{n+m}\le\int \Lambda^x_m\,\Lambda^v_n(dx).
\end{align*}
\end{proposition}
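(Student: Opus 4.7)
The strategy is to start from the stochastic bound already established in Proposition \ref{p2.3} and then replace the ``thinned'' RCMs $\xi^{(n)}_x$ (driven by a Poisson process with intensity $K_\lambda(V^v_{\le n-1})$) with the ``full'' RCMs $\xi^{[n]}_x$ (driven by a Poisson process with intensity $\lambda$). Since $K_\lambda(V^v_{\le n-1})\le \lambda$, one expects this replacement to only increase the resulting generation counts. Because the relevant monotonicity in the driving intensity is only known for the first and second generations (Proposition \ref{p2.2+}), the restriction $m\in\{1,2\}$ appears precisely here.

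More concretely, the first step is to write, by Proposition \ref{p2.3},
\begin{align*}
V_{n+m}^v \le_{st} \int V^x_m(\xi_x^{(n)})\,V^v_n(dx).
\end{align*}
The second step is to compare the integrand termwise. Conditionally on $C^v_{\le n}$ the families $(\xi^{(n)}_x)_{x\in V^v_n}$ and $(\xi^{[n]}_x)_{x\in V^v_n}$ are independent across $x$; for a fixed $x$, both processes are RCMs driven by a Poisson process of intensity $K_\lambda(V^v_{\le n-1})$ or $\lambda$, respectively, plus the extra point $\delta_x$. By Proposition \ref{p2.2+} (applicable because $m\in\{1,2\}$), applied conditionally on $C^v_{\le n}$, one has $V^x_m(\xi^{(n)}_x)\le_{st}V^x_m(\xi^{[n]}_x)$, and the proof of that result yields a coupling: construct $\xi^{[n]}_x$ first from an independent Poisson process of intensity $\lambda$, then obtain $\xi^{(n)}_x$ by independent thinning, taking the thinnings mutually independent across $x$. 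Under this joint coupling one has the a.s.\ pointwise bound $V^x_m(\xi^{(n)}_x)\le V^x_m(\xi^{[n]}_x)$ for every $x\in V^v_n$, which integrates against $V^v_n(dx)$ to yield the desired stochastic ordering.

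For the intensity measure inequality, fix $B\in\cX$ and apply the stochastic ordering to the increasing function $\nu\mapsto \nu(B)$ (truncated if one wishes, then taking monotone limits) to get
\begin{align*}
\Lambda^v_{n+m}(B)\le \BE \int V^x_m(\xi^{[n]}_x)(B)\,V^v_n(dx).
\end{align*}
The key observation now is that, conditionally on $C^v_{\le n}$, the random graph $\xi^{[n]}_x$ depends on $C^v_{\le n}$ only through $x$ (being built from an independent Poisson process of intensity $\lambda$), so $\BE[V^x_m(\xi^{[n]}_x)(B)\mid C^v_{\le n}]=\Lambda^x_m(B)$. Conditioning on $C^v_{\le n}$ inside the integral and taking expectations gives the claimed bound $\Lambda^v_{n+m}(B)\le \int \Lambda^x_m(B)\,\Lambda^v_n(dx)$.

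The main technical point is the second step: one must verify that Proposition \ref{p2.2+} can indeed be invoked conditionally and with a coupling that is independent across the different starting points $x\in V^v_n$, and this is also where the restriction to $m\in\{1,2\}$ becomes essential, since higher-generation monotonicity in $\lambda$ is the open question flagged just before Proposition \ref{p2.2+}.
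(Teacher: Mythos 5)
Your proposal is correct and follows essentially the same route as the paper: the first inequality is obtained by combining Proposition \ref{p2.3} with the conditional, coordinatewise application of Proposition \ref{p2.2+} (which is exactly where the restriction $m\in\{1,2\}$ enters), and the second by conditioning on $C^v_{\le n}$ and applying Campbell's formula to $\BE\int\Lambda^x_m(B)\,V^v_n(dx)$. Your write-up merely makes explicit the coupling and conditional-independence details that the paper leaves implicit.
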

\begin{proof}
Let $f\colon\bN \to \R_+$ be measurable and increasing. By Propositions \ref{p2.3} and \ref{p2.2+},
\begin{align*}
        \BE f(V^v_{n+m}) = \BE \BE (f(V^v_{n+m}) \mid C^v_{\le n}) \le 
        \BE f\left(  \int V^x_m(\xi_x^{(n)})\,V^v_n(dx) \right)
        \le \BE f\left(  \int V^x_m(\xi_x^{[n]})\,V^v_n(dx) \right).
\end{align*}
Since $\Lambda^v_0=\delta_v$ for the second statement, we can assume that $n\in\N$. For each $B\in\cX$ we have 
\begin{align*}
\Lambda^v_{n+m}(B)=\BE \BE(V^v_{n+m}(B) \mid C^v_{\le n}) 
\le \BE \int \Lambda^x_m(B)\,V^v_n(dx)
= \int \Lambda^x_m(B)\,\Lambda^v_n(dx),
\end{align*}
where we have used Campbell's formula.
\end{proof}

\begin{proposition}\label{p2.4}
    Let $v\in\BX$ and $n,m\in\N$. Then 
\begin{align*}
V^v_{\le n+m}\le_{st} V^v_{\le n-1}+\int V^x_{\le m}(\xi^{(n)}_x)\,V^v_n(dx)
\le_{st} V^v_{\le n-1}+\int V^x_{\le m}(\xi^{[n]}_x)\,V^v_n(dx).
\end{align*}
\end{proposition}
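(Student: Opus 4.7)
My plan is to decompose $V^v_{\le n+m} = V^v_{\le n-1} + W^v_{n,m}$, where $W^v_{n,m} := V^v_n + V^v_{n+1} + \cdots + V^v_{n+m}$ collects all vertices at graph distance between $n$ and $n+m$ from $v$ in $\xi^v$. The two inequalities of the proposition arise from successively bounding $W^v_{n,m}$, the first via the spatial Markov property together with Corollary \ref{c:st_ineq_V_n_mu}, and the second via a monotone coupling based on thinning.

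For the first inequality, I would condition on $C^v_{\le n}$ and invoke Proposition \ref{p2.1}. Iterating the Markov kernel $K_\lambda$ starting from the state $(V^v_{\le n-1}, V^v_n)$, the conditional law of $W^v_{n,m}$ coincides with the law of $V^{V^v_n}_{\le m}(\tilde\xi^{V^v_n})$, where $\tilde\xi^{V^v_n}$ is a RCM based on the seed $V^v_n$ together with an independent Poisson process $\eta_n$ of intensity $K_\lambda(V^v_{\le n-1})$. Corollary \ref{c:st_ineq_V_n_mu} applies to this RCM verbatim: its proof via Lemma \ref{l:st_ineq_V_n_mu} uses nothing about $\lambda$ beyond being locally finite and diffuse, so with $\lambda$ replaced by $K_\lambda(V^v_{\le n-1})$ it yields
$$W^v_{n,m} \le_{st} \int V^x_{\le m}(\xi^{(n)}_x)\,V^v_n(dx) \quad \text{conditional on } C^v_{\le n}.$$
Integrating back against the law of $C^v_{\le n}$ and adding $V^v_{\le n-1}$ on both sides produces the first inequality.

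For the second inequality, I would couple $\xi^{(n)}_x$ with $\xi^{[n]}_x$ vertex-by-vertex. Since $K_\lambda(V^v_{\le n-1}) = \bar\varphi(V^v_{\le n-1},\cdot)\lambda \le \lambda$, for each $x\in V^v_n$ one can first construct $\xi^{[n]}_x$ and then obtain $\xi^{(n)}_x$ by independently thinning the atoms of its driving Poisson process according to the factors $\bar\varphi(V^v_{\le n-1}, y)$. These couplings may be performed conditionally independently across $x\in V^v_n$, and by Proposition \ref{p2.2} each satisfies $V^x_{\le m}(\xi^{(n)}_x) \le V^x_{\le m}(\xi^{[n]}_x)$ almost surely. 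Integrating against $V^v_n$ and again removing the conditioning on $C^v_{\le n}$ gives the second inequality.

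The main subtlety I anticipate is justifying the use of Corollary \ref{c:st_ineq_V_n_mu} with the random intensity measure $K_\lambda(V^v_{\le n-1})$. This is handled cleanly by conditioning on $C^v_{\le n}$, under which $K_\lambda(V^v_{\le n-1})$ becomes deterministic; the deterministic versions of Corollary \ref{c:st_ineq_V_n_mu} and Proposition \ref{p2.2} hold uniformly in the underlying locally finite diffuse intensity, so the conditional inequalities can be reintegrated without any additional measurability obstruction.
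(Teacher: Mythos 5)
Your proposal is correct and follows essentially the same route as the paper: condition on $C^v_{\le n}$, use the spatial Markov property (Proposition \ref{p2.1}) to identify the conditional law of the remaining generations as $V^{V^v_n}_{\le m}$ of a RCM seeded by $V^v_n$ and driven by a Poisson process of intensity $K_\lambda(V^v_{\le n-1})$, then apply Corollary \ref{c:st_ineq_V_n_mu} for the first inequality and the thinning monotonicity of Proposition \ref{p2.2} (using $K_\lambda(V^v_{\le n-1})\le\lambda$) for the second. The subtlety you flag about applying Corollary \ref{c:st_ineq_V_n_mu} under the random intensity $K_\lambda(V^v_{\le n-1})$ is handled exactly as the paper does, by conditioning.
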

\begin{proof} Let $f\colon \bN \to \R_+$ be measurable and increasing.
By Proposition \ref{p2.1} the conditional distribution of $V_{\le n+m}^v$ given $C_{\le n}^v$ is 
that of $V_{\le n-1}^v + V_{\le m}^{V^v_n}(\xi^{(V^v_{\le n-1}),V^v_n})$, where $\xi^{(V^v_{\le n-1}),V^v_n}$ is a RCM 
based on $\eta_n+V_n^v$ and $\eta_n$ is a Poisson process with 
intensity measure $K_\lambda(V^v_{\le n-1})$.  Hence, we obtain from
Corollary \ref{c:st_ineq_V_n_mu} and Proposition \ref{p2.2}
\begin{align*}
\BE f(V^v_{\le n+m}) = \BE\, \BE\big[f(V^v_{\le n+m}) \mid C^v_{\le n}\big]
& \le  \BE f\left( V^v_{\le n-1} + \int V^x_{\le m}(\xi_x^{(n)})\,V^v_n(dx) \right) \\
&\le \BE f\left(V^v_{\le n-1} +  \int V^x_{\le m}(\xi^{[n]}_x)\,V^v_n(dx) \right).
\end{align*} 
\end{proof}

\begin{corollary}\label{c:clus_dom}
Let $v\in\BX$ and $n\in\N$. Then 
\begin{align*}
V^v\le_{st} V^v_{\le n-1}+\int V^x(\xi^{[n]}_x)\,V^v_n(dx).
\end{align*}
\end{corollary}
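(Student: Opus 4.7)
My plan is to obtain the corollary by letting $m \to \infty$ in Proposition \ref{p2.4}. Concretely, I would start from the inequality
\[
\BE f(V^v_{\le n+m}) \le \BE f\Big(V^v_{\le n-1} + \int V^x_{\le m}(\xi^{[n]}_x)\, V^v_n(dx)\Big),
\]
which Proposition \ref{p2.4} delivers for every bounded measurable increasing $f\colon \bN \to \R_+$, and push $m \to \infty$.

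Next I would identify the monotone a.s.\ limits on both sides. The sequence $V^v_{\le n+m}$ is nondecreasing in $m$ and its a.s.\ limit in $\bN$ is $V^v$, because every vertex of the cluster $C^v$ lies at some finite graph distance from $v$. On the right, Corollary \ref{c3.23} ensures $V^v_n \in \bN_{<\infty}$ a.s., so conditionally on $C^v_{\le n}$ and the family $(\xi^{[n]}_x)$ the integrand $V^x_{\le m}(\xi^{[n]}_x)$ increases a.s.\ to $V^x(\xi^{[n]}_x)$ for each of the finitely many atoms $x$ of $V^v_n$. Summing gives that the right-hand side increases a.s.\ to $V^v_{\le n-1} + \int V^x(\xi^{[n]}_x)\, V^v_n(dx)$.

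Then I would restrict to a determining subclass of test functions to justify the limit. For bounded cylindrical increasing functions $f(\nu) = \phi(\nu(B_1), \ldots, \nu(B_k))$ with bounded Borel $B_i$ and bounded increasing $\phi\colon \N_0^k \to \R_+$, one has $f(\nu_m) \uparrow f(\nu)$ whenever $\nu_m \uparrow \nu$ in $\bN$, since each $\nu_m(B_i)$ is a nondecreasing $\N_0$-valued sequence with finite limit $\nu(B_i)$. Hence monotone convergence applies to both sides, yielding
\[
\BE f(V^v) \le \BE f\Big(V^v_{\le n-1} + \int V^x(\xi^{[n]}_x)\, V^v_n(dx)\Big)
\]
for every such $f$, and a standard monotone class argument extends this from cylindrical to all bounded increasing measurable $f$, which is precisely the stated stochastic ordering.

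The main obstacle is the passage to the limit, because increasing measurable functions on $\bN$ need not be continuous under monotone limits (for instance $f(\nu)=\I\{|\nu|=\infty\}$ jumps at the limit). The workaround is precisely the restriction to cylindrical functions with bounded spatial coordinates, where such continuity is automatic; this class is rich enough to characterize $\le_{st}$, so the full stochastic dominance follows.
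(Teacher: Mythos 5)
Your proposal is correct and is essentially the paper's own argument: the paper proves the corollary in one line by letting $m\to\infty$ in Proposition \ref{p2.4}, using that almost surely $V^v_{\le n+m}\uparrow V^v$ and $V^x_{\le m}(\xi^{[n]}_x)\uparrow V^x(\xi^{[n]}_x)$. Your additional care with the class of bounded increasing cylindrical test functions merely spells out the limit passage that the paper leaves implicit.
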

\begin{proof}
The claim follows from Proposition \ref{p2.4}, since with probability one $V^v_{\le n+m}\uparrow V^v$ and 
$V^x_{\le m}(\xi^{[n]}_x)\uparrow V^x(\xi^{[n]}_x)$ as $m\to\infty$.
\end{proof}

Given $v\in\BX$ and a measurable function $h\colon \BX\to\N$, 
we define two spatial (Galton-Watson) branching processes  $(W^{v,h}_k)_{k\ge 0}$ and $(\widetilde W^{v,h}_k)_{k\ge 0}$ 
along with a sequence of families of random graphs $\{\xi^k_x:x\in W^{v,h}_k\}$, $k\in\N_0$,
recursively as follows. We set 
\begin{align*}
    W^{v,h}_0=\widetilde W^{v,h}_0=\delta_v, \quad  W^{v,h}_1= V^v_{h(v)}, \quad \widetilde W^{v,h}_1= V^{v!}_{\le h(v)},
\end{align*}
and $\{\xi^0_x:x\in W^{v,h}_0\}:=\{\xi^v\}$.
Given $k\ge 1$, $Z_k:=\big(\big(W^{v,h}_n,\widetilde W^{v,h}_n\big)\big)_{n\le k}$ and $\big(\{\xi^n_x:x\in W^{v,h}_n\}\big)_{n\le k-1}$
we let $\{\xi^k_x:x\in W^{v,h}_k\}$ be a family of random graphs which are conditionally independent given $Z_k$ and $\BP$-a.s.
\begin{align*}
    \BP(\xi^k_x\in \cdot \mid Z_k)=\BP (\xi^x\in \cdot),  \quad x\in W^{v,h}_{k}.
\end{align*}  
Then we define
\begin{align*}
W^{v,h}_{k+1}:= \int V^x_{h(x)} (\xi^k_x) \, W^{v,h}_{k}(d x), \quad 
\widetilde W^{v,h}_{k+1}:= \int V^{x!}_{\le h(x)} (\xi^k_x) \, W^{v,h}_{k}(dx.) 
\end{align*}
Note that
\begin{align*}
\widetilde W^{v,h}_{k+1}= \int V^{x!}_{\le h(x)-1} (\xi^k_x) \, W^{v,h}_{k}(d x)+W^{v,h}_{k+1}.
\end{align*}

Hence, for every point $x\in W^{v,h}_k$ from the $k$-th
generation we run an independent RCM driven by
$\eta +\delta_x$, where $\eta$ is a Poisson process with intensity
measure $\lambda$ and place all points of its $h(x)$-th generation in
our spatial branching process $k+1$-th generation, i.e. $W^{v,h}_{k+1}$.  
The process $\big(\widetilde W^{v,h}_k\big)_{k\ge 0}$ has a similar interpretation.
We define
\begin{align*}
 W^{v,h}_{\le k} = \sum_{i=0}^k W^{v,h}_i, \quad  W^{v,h}=\sum_{i=0}^\infty W^{v,h}_i, \quad   
\widetilde W^{v,h}_{\le k} = \sum_{i=0}^k \widetilde W^{v,h}_i, \quad   \widetilde W^{v,h} = \sum_{i=0}^\infty \widetilde W^{v,h}_i.
\end{align*}
The special case $h\equiv n$ for some fixed $n\in\N$ is of particular importance. In that
case we use the upper index $n$ instead of $h$.

\begin{remark}\rm \label{r:loc_bound}
For each fixed $k\in\N$, the point process $\widetilde W^{v,h}_{\le k}$ is locally
finite. If the extinction probability of $W^{v,h}$ is equal to one, then this is also
true for $\widetilde W^{v,h}$. Otherwise, one cannot be sure of the locally
bounded property. Corollary \ref{c:subcrit}  
provides a simple sufficient condition 
(namely $D^*_\varphi<\infty$ and $c^*_n(t)< 1$ for some $n\in\N$) when $W^{v,n}$ is finite with probability
one. In the stationary case, it suffices to assume that $c^*_n(t)\le 1$,
since the total size of $W^{v,n}$ coincides with the total size of a
Galton–Watson process with offspring distribution $|V^0_n|$.
\end{remark}

We have the following useful property.

\begin{proposition}\label{p2.5}
Let $v\in\BX$ and $k,n\in\N$. Then 
\begin{align}\label{e876}
W^{v,n}_{k}&\le_{st} W^{v,1}_{kn},\\
\label{e877}
W^{v,n}_{\le k}&\le_{st} \sum_{i=1}^k W^{v,1}_{ i n}.
    \end{align}
In particular, if $k=1$, then
\begin{align}\label{e876_1}
    V^{v}_{n}&\le_{st} W^{v,1}_{n}.
\end{align}
\end{proposition}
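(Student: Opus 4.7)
The plan is to prove the three assertions in order, bootstrapping from the single-generation case \eqref{e876_1} to the general bounds via induction and branching-type couplings.

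For \eqref{e876_1} I argue by induction on $n$. The base case $n=1$ is trivial because $W^{v,1}_1 = V^v_1$. For the inductive step, Proposition \ref{p2.3+} (with $m=1$) gives $V^v_{n+1} \le_{st} \int V^x_1(\xi^{[n]}_x)\,V^v_n(dx)$; since the family $\{\xi^{[n]}_x\}$ is conditionally independent given $C^v_{\le n}$ with each $\xi^{[n]}_x \stackrel{d}{=} \xi^x$, this integral equals in law $\int V^x_1(\widehat\xi_x)\,V^v_n(dx)$, where $\{\widehat\xi_x\}$ is a family of mutually independent RCMs, independent of $V^v_n$, with $\widehat\xi_x \stackrel{d}{=} \xi^x$. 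By Strassen's theorem and the inductive hypothesis, I couple $V^v_n \le W^{v,1}_n$ on a common probability space enlarged to carry $\{\widehat\xi_x\}$. Monotonicity of the non-negative integrand then yields
\begin{equation*}
V^v_{n+1} \le_{st} \int V^x_1(\widehat\xi_x)\,V^v_n(dx) \le \int V^x_1(\widehat\xi_x)\,W^{v,1}_n(dx) \stackrel{d}{=} W^{v,1}_{n+1},
\end{equation*}
closing the induction.

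For \eqref{e876} I induct on $k$, the base case $k=1$ being \eqref{e876_1}. The recursive definition gives $W^{v,n}_{k+1} = \int V^x_n(\xi^k_x)\,W^{v,n}_k(dx)$, where the $\xi^k_x$ are conditionally independent given $W^{v,n}_k$ and each distributed as $\xi^x$. Applying \eqref{e876_1} pointwise I couple $V^x_n(\xi^k_x) \le W^{x,1}_n$ for mutually independent branching processes $W^{x,1}$ rooted at the relevant points $x$, and the inductive hypothesis couples $W^{v,n}_k \le W^{v,1}_{kn}$; combined with the branching decomposition $W^{v,1}_{(k+1)n} \stackrel{d}{=} \int W^{x,1}_n\,W^{v,1}_{kn}(dx)$, this yields $W^{v,n}_{k+1} \le_{st} W^{v,1}_{(k+1)n}$. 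For \eqref{e877} I iterate this construction inductively in $j$ to obtain a single probability space carrying all $W^{v,n}_j$ and $W^{v,1}_{jn}$, $0 \le j \le k$, with $W^{v,n}_j \le W^{v,1}_{jn}$ a.s.\ simultaneously; summing over $j$ gives the claim (the $j=0$ terms both equal the common root $\delta_v$).

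The main technical subtlety is arranging the couplings so that each new independent family of RCMs introduced at a branching step remains independent of everything already coupled. This is handled routinely via Strassen's theorem on successively enlarged probability spaces, but it is the only non-mechanical ingredient of the proof.
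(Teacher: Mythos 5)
Your argument is correct in substance, but it takes a genuinely different route from the paper. The paper proves \eqref{e876_1} in one shot: it writes the law of $V^v_n$ via the nested kernel representation \eqref{distVn}, bounds $K_\lambda(\mu,\mu',\cdot)\le\tilde K(\mu',\cdot)$ by Bernoulli's inequality (see \eqref{tildeK}, \eqref{Kmon}), and compares the resulting Poisson integrals with those defining $W^{v,1}_n$; the passage from $W^{v,n}_k$ to $W^{v,1}_{kn}$ is then done by collapsing superpositions of independent Poisson processes into the $\tilde K$-chain, entirely at the level of distributions, so no couplings are needed. You instead bootstrap \eqref{e876_1} by induction on $n$ from Proposition \ref{p2.3+} with $m=1$ (no circularity, since that proposition does not use Proposition \ref{p2.5}), and then handle \eqref{e876} by induction on $k$ using a branching decomposition of $W^{v,1}$. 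Two points deserve attention. First, the identity $W^{v,1}_{(k+1)n}\overset{d}{=}\int W^{x,1}_n\,W^{v,1}_{kn}(dx)$ with independent copies $W^{x,1}$ is exactly where the paper's work happens: it follows from the fact that, given the current generation $\mu$, the next generation of $W^{v,1}$ is Poisson with intensity $\tilde K(\mu)$, so superposing independent $\tilde K$-chains started from the points of $\mu$ reproduces the chain started from $\mu$; you should state this superposition argument rather than take the branching property for granted. Likewise, attaching Strassen couplings of $(V^x_n,W^{x,1}_n)$ as independent marks at the (random) points requires a measurable family of couplings; this can be arranged, or avoided altogether by arguing distributionally (stochastic domination is preserved under superposition of independent point processes, and the law of the superposition is increasing in the parent configuration), which is closer to what the paper does. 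Second, note that what you actually prove for the cumulative bound is $W^{v,n}_{\le k}\le_{st}\sum_{i=0}^k W^{v,1}_{in}$, with the $i=0$ root term included; this is the correct form (the display \eqref{e877} as printed omits the root $\delta_v$ and fails already for $k=n=1$, since the right-hand side almost surely has no atom at $v$), and it is the version needed later, because $\sum_{i=0}^k W^{v,1}_{in}\le W^{v,1}_{\le kn}$.
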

\begin{proof}
Define a kernel $\tilde{K}$ from $\bN$ to $\bN$ by
\begin{align}\label{tildeK}
\tilde K(\mu,\cdot):=\int K_\lambda(0,\delta_x,\cdot)\,\mu(dx).
\end{align}
By Bernoulli's inequality, we have
\begin{align}\label{Kmon}
K_\lambda(\mu,\mu',\cdot)\le \tilde K(\mu',\cdot),\quad \mu,\mu'\in\bN.
\end{align}
Taking an increasing and measurable $f\colon\bN\to\R$, we therefore obtain from \eqref{distVn}
and the monotonicity properties of a Poisson process that
\begin{align}
\BE f(V^v_n)\le
\idotsint f(\psi_n)\, \Pi_{\tilde K(\psi_{n-1})}(d\psi_n)\cdots
\Pi_{\tilde K(\psi_1)}(d\psi_2)\Pi_{\tilde K(\delta_v)}(d\psi_1)=\BE f(W^{v,1}_n).
\end{align}
This is \eqref{e876} for $k=1$, i.e. \eqref{e876_1}.

By the definition of $W^{v,n}_{2}$ and \eqref{distVn} we have
\begin{align}\label{2944}
\BE f(W^{v,n}_{2})=\BE \int f(\psi)H'_n(V^v_n,d\psi), 
\end{align}
where for given $m\in\N$ and $\delta_{\boldsymbol{x}_m}\in\bN_{<\infty}$
\begin{align*}
H'_n(\mu,\cdot):=\idotsint \I\{\psi^1+\cdots+\psi^m\in\cdot\}\,H_n(0,\delta_{x_1},d\psi^1)\cdots \,H_n(0,\delta_{x_m},d\psi^m). 
\end{align*}
Using \eqref{Kmon} and the monotonicity properties of a Poisson process
in the definition \eqref{e:kernelHn} of kernels $H_n$ once again, we see that 
\begin{align*}
\int f(\psi)H'_n(\mu,d\psi)
&\le \idotsint f(\psi^1_n+\cdots+\psi^m_n)\,\Pi_{\tilde K(\psi^1_{n-1})}(d\psi^1_n)\cdots \Pi_{\tilde K(\psi^m_{n-1})}(d\psi^m_n)\times\cdots\\
&\qquad\qquad \times \Pi_{\tilde K(\delta_{x_1})}(d\psi^1_1)\cdots \Pi_{\tilde K(\delta_{x_m})}(d\psi^m_1).
\end{align*}
By the definition \eqref{tildeK} of the kernel $\tilde K$ and the fundamental properties of a Poisson process, the above $m$ innermost integrals equal
\begin{align*}
\int f(\psi_n)\,\Pi_{\tilde K(\psi^1_{n-1}+\cdots+\psi^m_{n-1})}(d\psi_n).
\end{align*}
Proceeding inductively, we obtain that
\begin{align*}
\int f(\psi)H'_n(\mu,d\psi)\le \idotsint f(\psi_n)\, \Pi_{\tilde K(\psi_{n-1})}(d\psi_n)\cdots \Pi_{\tilde K(\mu)}(d\psi_1).
\end{align*}
The above right-hand side is an increasing function of $\mu$. Hence, we can apply
\eqref{e876} for $k=1$ to obtain from \eqref{2944}
\begin{align*}
\BE f(W^{v,n}_{2})\le \idotsint f(\psi_n)\, \Pi_{\tilde K(\psi_{n-1})}(d\psi_n)\cdots \Pi_{\tilde K(\chi_n)}(d\psi_1)
\Pi_{\tilde K(\chi_{n-1})}(d\chi_n)\cdots \Pi_{\tilde K(\delta_v)}(d\chi_1)=\BE f(W^{v,1}_{2n}).
\end{align*}
Hence, we have \eqref{e876} for $k=2$. The case of a general $k$ can be treated analogously.
The second assertion \eqref {e877} can be proved in the same way, modifying
the arguments of the function $f$ in an appropriate way.
\end{proof}

In the following result, and also later, we consider a Galton-Watson process $(W_k)_{k\ge 0}$  
with Poisson offspring distribution with parameter 
$D^*_\varphi=\|D_\varphi\|_\infty$ starting with $W_0=1$. Then $W_k$ is the number of points in 
the $k$-th generation.  We set $W_{\le k}:=\sum_{i=0}^k W_i$.

\begin{proposition}\label{p:GW_c_1} 
Let $v\in\BX$ and $n\in\N$. Then 
    \begin{align*}
        |W^{v,1}_n|\le_{st} W_n, \quad |W^{v,1}_{\le n}|\le_{st} W_{\le n}.
    \end{align*}
\end{proposition}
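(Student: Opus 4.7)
I will prove both inequalities simultaneously by induction on $n$ via a coupling, whose only analytic input is the elementary fact that $\text{Poisson}(a) \le_{st} \text{Poisson}(b)$ whenever $a \le b$, and that sums of independent Poissons are Poisson with summed parameter.

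The key observation comes from a single application of Proposition \ref{p2.1}: if $\xi^k_x$ is an RCM driven by $\eta + \delta_x$ with $\eta$ having intensity measure $\lambda$, then (taking $V^x_{\le -1} = 0$ and $V^x_0 = \delta_x$ in the spatial Markov kernel) $V^x_1(\xi^k_x)$ is a Poisson process with intensity measure $K_\lambda(0, \delta_x, \cdot) = \varphi(x, \cdot)\,\lambda$. Hence $|V^x_1(\xi^k_x)|$ is $\text{Poisson}(D_\varphi(x))$, and $D_\varphi(x) \le D^*_\varphi$ for $\lambda$-a.e.\ $x$. Consequently, conditional on $W^{v,1}_k$ with $|W^{v,1}_k| = N$ and support $x_1,\dots,x_N$ (all lying in the full-$\lambda$-measure set $\{D_\varphi \le D^*_\varphi\}$ almost surely, since each generation after the 0-th is sampled according to an intensity absolutely continuous with respect to $\lambda$), the total offspring $|W^{v,1}_{k+1}| = \sum_i |V^{x_i}_1(\xi^k_{x_i})|$ is, by conditional independence of $\{\xi^k_{x_i}\}$ and Poisson additivity, $\text{Poisson}(\sum_i D_\varphi(x_i))$. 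This is stochastically dominated by $\text{Poisson}(N D^*_\varphi)$, which is precisely the conditional distribution of $W_{k+1}$ given $W_k = N$.

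Starting from $|W^{v,1}_0| = W_0 = 1$ and assuming inductively that we have coupled the two processes up to generation $k$ with $|W^{v,1}_j| \le W_j$ a.s.\ for all $j \le k$, the preceding paragraph combined with standard monotone coupling of Poisson laws lets us construct $(|W^{v,1}_{k+1}|, W_{k+1})$ on a joint space with the correct marginal conditional laws given $(W^{v,1}_k, W_k)$ and satisfying $|W^{v,1}_{k+1}| \le W_{k+1}$. Iterating to generation $n$ gives the first inequality; summing the coupled inequalities over $j = 0, 1, \dots, n$ yields $|W^{v,1}_{\le n}| \le W_{\le n}$, which is the second.

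The argument is almost entirely bookkeeping, so I do not anticipate a real obstacle. The one subtlety worth stating carefully is the step that converts the generation-wise stochastic domination into a joint coupling across all generations, since $|W^{v,1}_{k+1}|$ depends on the random locations $x_i$ (not just on $N$), while $W_{k+1}$ depends only on $W_k$. The trick is to first sample $|W^{v,1}_{k+1}|$ from its correct conditional law given $W^{v,1}_k$, then use the monotone Poisson coupling between the laws $\text{Poisson}(\sum_i D_\varphi(x_i))$ and $\text{Poisson}(W_k D^*_\varphi)$ to produce $W_{k+1} \ge |W^{v,1}_{k+1}|$ with the required conditional law given $W_k$.
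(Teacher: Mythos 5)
Your proof is correct and is essentially the paper's argument in coupling form: both rest on the facts that each point $x$ produces a Poisson$(D_\varphi(x))$ number of offspring with $D_\varphi(x)\le D^*_\varphi$ ($\lambda$-a.e.), that independent Poissons add, and that Poisson laws are stochastically monotone in the parameter; the paper phrases this as a comparison of iterated integrals against Poisson process distributions, while you realize the same domination by an explicit generation-by-generation coupling. Your coupling also hands you the second inequality by summing the pointwise bounds, which matches the paper's remark that one only needs to modify the argument of the test function.
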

\begin{proof}
Note that the total size of the offspring distribution $|V^x_1(\xi_x)|$ is a Poisson random variable with parameter 
$D_\varphi(x)$. Recall the definition \eqref{tildeK}.  
Taking an increasing and measurable $f\colon\N\to\R$, we 
obtain from the monotonicity properties of a Poisson process that
\begin{align*}
\BE f(|W^{v,1}_n|)=&
\idotsint f(|\psi_n|)\, \Pi_{\tilde K(\psi_{n-1})}(d\psi_n)\cdots
\Pi_{\tilde K(\psi_1)}(d\psi_2)\Pi_{\tilde K(\delta_v)}(d\psi_1)\\
\le& \idotsint f(|\psi_n|)\, \Pi_{D^*_\varphi\psi_{n-1}}(d\psi_n)\cdots
\Pi_{D^*_\varphi\psi_1}(d\psi_2)\Pi_{D^*_\varphi\delta_v}(d\psi_1)\le \BE f(W_n).
\end{align*}

The proof of the second inequality is the same, up to the fact that the argument of the function $f$ has to be suitably modified.
\end{proof}

The following results are crucial for later purposes.

\begin{proposition}\label{p2.6}
Let $v\in\BX$ and $n, k\in\N$. Then 
\begin{align*}
V^v_{\le kn}\le_{st} \widetilde W^{v,n}_{\le k}.
\end{align*}
\end{proposition}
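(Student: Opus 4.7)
The plan is to prove the claim by induction on $k$ with $n$ fixed. The base case $k=1$ is essentially by definition: $\widetilde W^{v,n}_{\le 1}=\delta_v+V^{v!}_{\le n}=V^v_{\le n}$, so both sides have the same distribution.

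For the inductive step, assuming $V^x_{\le kn}\le_{st}\widetilde W^{x,n}_{\le k}$ for every $x\in\BX$, the main tool will be Proposition \ref{p2.4} applied with splitting level $n$ and remainder $kn$, which gives
\[
V^v_{\le(k+1)n}\le_{st} V^v_{\le n-1}+\int V^x_{\le kn}(\xi^{[n]}_x)\, V^v_n(dx).
\]
The family $\{\xi^{[n]}_x:x\in V^v_n\}$ is, by construction, conditionally independent given $C^v_{\le n}$, and each $\xi^{[n]}_x$ is an RCM based on $\eta+\delta_x$. Hence, Strassen's theorem lets me couple each $V^x_{\le kn}(\xi^{[n]}_x)$ with a random measure $\widehat W^{x,n}_{\le k}$ distributed as $\widetilde W^{x,n}_{\le k}$, where the $\widehat W^{x,n}_{\le k}$ are conditionally independent given $V^v_n$ and $V^x_{\le kn}(\xi^{[n]}_x)\le \widehat W^{x,n}_{\le k}$ almost surely. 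Integration against $V^v_n(dx)$ preserves this order.

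It then remains to identify the distribution of the upper bound with that of $\widetilde W^{v,n}_{\le k+1}$. Unfolding the recursion defining $W^{v,n}_j$ and $\widetilde W^{v,n}_j$, for $j\ge 2$ one can write $\widetilde W^{v,n}_j=\int\widetilde S^{x,n}_{j-1}\,V^v_n(dx)$, where $(\widetilde S^{x,n}_j)_{j\ge 0}$ is the spatial branching process issued from the root $x\in V^v_n$ using the independent RCMs attached to the descendants of $x$; its law is precisely that of $\widetilde W^{x,n}$. Summing in $j$ and combining $\widetilde W^{v,n}_1=V^{v!}_{\le n}$ with $\widetilde S^{x,n}_0=\delta_x$ and $V^v_{\le n}-V^v_n=V^v_{\le n-1}$ yields
\[
\widetilde W^{v,n}_{\le k+1}=V^v_{\le n-1}+\int\widetilde S^{x,n}_{\le k}\, V^v_n(dx),
\]
which has the same distribution as $V^v_{\le n-1}+\int\widehat W^{x,n}_{\le k}\,V^v_n(dx)$ by conditional independence, closing the induction.

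The main obstacle I expect is the careful combinatorial bookkeeping: verifying both that the conditional independence of the $\xi^{[n]}_x$ given $C^v_{\le n}$ really permits the simultaneous Strassen coupling across all $x\in V^v_n$, and that the sub-tree decomposition of $\widetilde W^{v,n}_{\le k+1}$ around its generation-one vertices reproduces exactly the independent copies of $\widetilde W^{x,n}_{\le k}$ needed to match the coupled upper bound.
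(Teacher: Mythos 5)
Your proof is correct, and it reorganizes the argument in a way that differs from the paper's. The paper does not induct on $k$: it applies Proposition \ref{p2.4} once to peel off the first $n$ generations, then iterates \emph{forwards}, introducing the nested point processes $V^{[2]}_n,V^{[3]}_n,\dots$ of descendant pairs and the attached conditionally independent graphs $\xi^{[2]}_{x_1,x_2}$, arriving at the explicit telescoping bound \eqref{078}, whose terms are then regrouped to recognize $\widetilde W^{v,n}_{\le k}$. You instead run a \emph{backward} induction on $k$, which trades the bookkeeping of nested integrals for two extra ingredients: (a) the substitution of each $V^x_{\le kn}(\xi^{[n]}_x)$ by a dominating copy of $\widetilde W^{x,n}_{\le k}$ inside the random integral, and (b) the sub-tree decomposition $\widetilde W^{v,n}_{\le k+1}=V^v_{\le n-1}+\int\widetilde S^{x,n}_{\le k}\,V^v_n(dx)$ with the $\widetilde S^{x,n}_{\le k}$ conditionally i.i.d.\ copies of $\widetilde W^{x,n}_{\le k}$ given $V^v_n$. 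Both ingredients check out: (b) is exactly the branching property built into the recursive definition of $(W^{v,h}_j,\widetilde W^{v,h}_j)$, and for (a) you do not even need Strassen's theorem --- conditioning on $C^v_{\le n}$, the integral is a sum of conditionally independent terms, so for an increasing functional $f$ you can replace the law of $V^{x_i}_{\le kn}(\xi^{[n]}_{x_i})$ by that of $\widetilde W^{x_i,n}_{\le k}$ one coordinate at a time using the induction hypothesis and Fubini, which sidesteps the measurability concerns of a simultaneous coupling over the random set $V^v_n$. Your base case $\widetilde W^{v,n}_{\le 1}=\delta_v+V^{v!}_{\le n}=V^v_{\le n}$ is also right. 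Net effect: your route is cleaner at the level of the main induction but pushes the combinatorial work into the branching-property identity, whereas the paper keeps everything as explicit iterated integrals and never needs to identify sub-trees of the branching process.
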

\begin{proof}
By Proposition \ref{p2.4}, from the similar definitions of $\{\xi_x^{[n]} : x\in V^v_n\}$ and $\{\xi_x^{1} : x\in V^v_n\}$ we get 
\begin{align}\label{6076}
V^v_{\le kn}\le_{st} V^v_{\le n-1}+\int V^{x_1}_{\le (k-1)n}\big(\xi^1_{x_1}\big)\,V^v_n(d x_1).
\end{align}
Define a point process $V^{[2]}_n$ on $\BX \times\BX$ by 
\begin{align*}
V^{[2]}_n:=\iint \I\{(x_1,x_2)\in\cdot\}\,V^{x_1}_{ n}(\xi^1_{x_1}, d x_2)\,V^v_n(d x_1).
\end{align*}
Let $\big\{\xi^{[2]}_{x_1,x_2}:(x_1,x_2)\in V^{[2]}_n\big\}$ be a family of random graphs which are conditionally independent given $V^{[2]}_n$ and $\BP$-a.s.
\begin{align*}
    \BP(\xi^{[2]}_{x_1,x_2}\in \cdot \mid V^{[2]}_n)=\BP (\xi^{x_2}\in \cdot),  \quad (x_1,x_2)\in V^{[2]}_n.
\end{align*}  
This implies that for each $x_1\in V^v_n$ the random graphs
$\xi^{[2]}_{x_1,x_2}$, $x_2\in V^{x_1}_{ n}(\xi^1_{x_1})$, are conditionally
independent given $V^{x_1}_{n}(\xi^1_{x_1})$ and
\begin{align*}
    \BP(\xi^{[2]}_{x_1,x_2}\in \cdot \mid V^{x_1}_{n}(\xi^1_{x_1}) )=\BP (\xi^{x_2}\in \cdot),  \quad x_2\in V^{x_1}_{n}(\xi^1_{x_1}).
\end{align*}  
Therefore, we can apply Proposition \ref{p2.4} in \eqref{6076} to the conditional distribution
(w.r.t.\ $V^{[2]}_n$) to see that
\begin{align*}
V^v_{\le kn} \le_{st}& V^v_{\le n-1}+\int V^{x_1}_{\le n-1}(\xi^1_{x_1}) \, V^v_n(d x_1)
+\iint V^{x_2}_{\le (k-2)n}(\xi^{[2]}_{x_1,x_2})\,V^{x_1}_n(\xi^1_{x_1}, d x_2)\,V^v_n(d x_1),
\end{align*}
It is easy to see that the distribution
of the above right-hand side does not change when replacing $\xi^{[2]}_{x_1,x_2}$ by $\xi^2_{x_2}$. It follows that
\begin{align*}
V^v_{\le kn} \le_{st}& V^v_{\le n-1}+\int V^{x_1}_{\le n-1}(\xi^1_{x_1}) \, V^v_n(d x_1)
+\iint V^{x_2}_{\le (k-2)n}(\xi^2_{x_2})\,V^{x_1}_n(\xi^1_{x_1}, d x_2)\,V^v_n(d x_1).
\end{align*}
Proceeding inductively, we obtain
\begin{align}\label{078}\notag
V^v_{\le kn} \le_{st} V^v_{\le n-1}&+\int V^{x_1}_{\le n-1}(\xi^1_{x_1}) \, V^{v}_n(d x_1)+\cdots +
\idotsint V^{x_{k-2}}_{\le n-1}(\xi^{k-2}_{x_{k-2}}) \, V^{x_{k-3}}_{n}(\xi^{k-3}_{x_{k-3}},d x_{k-2})\cdots V^{v}_n(d x_1)\\
&+\idotsint V^{x_{k-1}}_{\le n}(\xi^{k-1}_{x_{k-1}}) \, V^{x_{k-2}}_{n}(\xi^{k-2}_{x_{k-2}},d x_{k-1})\cdots V^{v}_n(d x_1).
\end{align}
Here, the last term can be written as
\begin{align*}
\idotsint V^{x_{k-2}}_n(\xi^{k-2}_{x_{k-2}}) &\, V^{x_{k-3}}_{n}(\xi^{k-3}_{x_{k-3}},d x_{k-2})\cdots V^{v}_n(d x_1)\\
&\quad +\idotsint V^{x_{k-1}!}_{\le n}(\xi^{k-1}_{x_{k-1}}) \, V^{x_{k-2}}_{n}(\xi^{k-2}_{x_{k-2}},d x_{k-1})\cdots V^{v}_n(d x_1)\\
&=\idotsint V^{x_{k-2}}_n(\xi^{k-2}_{x_{k-2}}) \, V^{x_{k-3}}_{n}(\xi^{k-3}_{x_{k-3}},d x_{k-2})\cdots V^{v}_n(d x_1)
+\widetilde{W}^{v,n}_k.
\end{align*}
Inserting this into \eqref{078}, performing the preceding step several times, we end up with
\begin{align*}
V^v_{\le kn} \le_{st} V^v_{\le n}+\widetilde{W}^{v,n}_1+\cdots+\widetilde{W}^{v,n}_k= \widetilde W^{v,n}_{\le k}.
\end{align*} 
This concludes the proof.
\end{proof}

\begin{corollary}\label{c2.1}
Let $v\in\BX$ and $k\in\N$. Then 
\begin{align*}
V^v_{\le k}\le_{st} W^{v,1}_{\le k}.
\end{align*}
\end{corollary}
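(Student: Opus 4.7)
The plan is to derive this corollary as a direct specialization of Proposition \ref{p2.6} to the case $n=1$, combined with the observation that for $h\equiv 1$ the two branching processes $\widetilde W^{v,1}$ and $W^{v,1}$ coincide generation by generation.

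First, I would apply Proposition \ref{p2.6} with $n=1$ to obtain the stochastic inequality
\begin{align*}
V^v_{\le k} \le_{st} \widetilde W^{v,1}_{\le k}, \quad k\in\N.
\end{align*}
Second, I would verify by induction on $k$ that $\widetilde W^{v,1}_k = W^{v,1}_k$ almost surely for every $k\in\N_0$, which by summation yields $\widetilde W^{v,1}_{\le k} = W^{v,1}_{\le k}$, and hence the desired conclusion.

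The key observation supporting the induction is that $V^{x!}_{\le 1}(\mu') = V^x_1(\mu')$ for any graph $\mu'$ containing $x$. Indeed, by definition $V^{x!}_{\le 1}(\mu') = V^x_{\le 1}(\mu') - \delta_x = V^x_0(\mu') + V^x_1(\mu') - \delta_x$, and $V^x_0(\mu') = \delta_x$. The base case $k=0$ holds since both processes start at $\delta_v$, and the case $k=1$ gives $\widetilde W^{v,1}_1 = V^{v!}_{\le 1} = V^v_1 = W^{v,1}_1$. For the inductive step, using the recursive definitions
\begin{align*}
\widetilde W^{v,1}_{k+1} = \int V^{x!}_{\le 1}(\xi^k_x)\,W^{v,1}_k(dx), \qquad W^{v,1}_{k+1} = \int V^x_1(\xi^k_x)\,W^{v,1}_k(dx),
\end{align*}
the identity $V^{x!}_{\le 1}(\xi^k_x) = V^x_1(\xi^k_x)$ applied pointwise for each $x\in W^{v,1}_k$ immediately gives $\widetilde W^{v,1}_{k+1} = W^{v,1}_{k+1}$.

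There is no real obstacle here: the content of the statement already lies in Proposition \ref{p2.6}, and all that remains is a bookkeeping check on the definition of the tilde-process in the degenerate case $h\equiv 1$. The only point worth being careful about is remembering that the $k=0$ generation is included in both $V^v_{\le k}$ (through $V^v_0=\delta_v$) and $W^{v,1}_{\le k}$ (through $W^{v,1}_0=\delta_v$), so no off-by-one correction is needed when passing from Proposition \ref{p2.6} to the present corollary.
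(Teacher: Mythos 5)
Your proof is correct and takes exactly the same route as the paper: Proposition \ref{p2.6} with $n=1$ together with the identity $\widetilde W^{v,1}_{\le k}= W^{v,1}_{\le k}$, which the paper states without proof and you verify carefully via $V^{x!}_{\le 1}=V^x_1$.
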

\begin{proof}
The statement follows from Proposition \ref{p2.6} for $n=1$ and the observation 
that $\widetilde W^{v,1}_{\le k}\equiv W^{v,1}_{\le k}$.
\end{proof}

\begin{proposition}\label{p2.9}
Let $v\in\BX$ and $h\colon \BX\to\N$ be a measurable function.
Then 
\begin{align*}
 V^v\le_{st} \widetilde W^{v,h}.
\end{align*}
\end{proposition}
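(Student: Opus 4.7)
I would extend the scheme of Proposition~\ref{p2.6} to accommodate a variable chunk size. The goal is to prove, for every $N \in \N_0$,
\[
V^v_{\le N} \le_{st} \widetilde W^{v,h}_{\le N},
\]
and then pass to the limit $N \to \infty$ via monotone convergence.

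For the finite-$N$ bound I would iterate Proposition~\ref{p2.4}, using $n$ equal to $h$ evaluated at the current branching vertex. Applied first with $n = h(v)$ and $m = N - h(v)$ (the case $N \le h(v)$ is already trivial, since then $V^v_{\le N} \le V^v_{\le h(v)} = \widetilde W^{v,h}_{\le 1}$), this gives
\[
V^v_{\le N} \le_{st} V^v_{\le h(v)-1} + \int V^x_{\le N-h(v)}\big(\xi^{[h(v)]}_x\big)\,V^v_{h(v)}(dx).
\]
For each $x \in V^v_{h(v)} = W^{v,h}_1$, the conditional independence of $\{\xi^{[h(v)]}_x\}_x$ given $C^v_{\le h(v)}$, together with the distributional identification $\xi^{[h(v)]}_x \overset{d}{=} \xi^1_x$ in the branching construction, allows me to apply Proposition~\ref{p2.4} again, conditionally, with $n = h(x)$. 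Because $h \ge 1$, the remaining depth strictly decreases at each iteration, so along every branch the procedure terminates within $N$ steps. The same end-of-proof manipulation as in Proposition~\ref{p2.6} (splitting each terminal $V^{x_k}_{\le h(x_k)}(\xi^k_{x_k})$ as $\delta_{x_k} + V^{x_k!}_{\le h(x_k)}(\xi^k_{x_k})$ and absorbing the $\delta_{x_k}$ back up the chain) then rearranges the accumulated terms into $\widetilde W^{v,h}_{\le N}$.

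Sending $N \to \infty$, we have $V^v_{\le N} \uparrow V^v$ and $\widetilde W^{v,h}_{\le N} \uparrow \widetilde W^{v,h}$ almost surely as counting measures, so for every bounded increasing $f\colon \bN \to \R$ monotone convergence yields $\BE f(V^v) \le \BE f(\widetilde W^{v,h})$, which is the claimed stochastic domination.

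\textbf{Main obstacle.} The delicate part is the variable-chunk bookkeeping needed to identify, in distribution, the accumulated ``settled'' contributions at each level $k$ with $\widetilde W^{v,h}_k$, since different branches of the iteration terminate at different depths depending on the realization of $h$ along them. An alternative route that sidesteps the explicit truncation is to apply Corollary~\ref{c:clus_dom} (instead of Proposition~\ref{p2.4}) at every step, decomposing $V^v$ directly into $\widetilde W^{v,h}_{\le 1}$ plus fresh full clusters hung off $V^v_{h(v)}$, iterating this, and arguing that the residual sitting on the $k$-th generation of $W^{v,h}$ vanishes when the branching process goes extinct and is absorbed into the (then-infinite) $\widetilde W^{v,h}$ otherwise.
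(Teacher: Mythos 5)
Your proposal is correct and follows essentially the same route as the paper: iterate Proposition \ref{p2.4} with chunk size $h$ evaluated at the current branching vertex (exactly the scheme of Proposition \ref{p2.6}), use $h\ge 1$ to exhaust the depth and obtain $V^v_{\le N}\le_{st}\widetilde W^{v,h}_{\le N}\le \widetilde W^{v,h}$, then let $N\to\infty$ using $V^v_{\le N}\uparrow V^v$. The handling of the trivial case $N\le h(v)$ and the replacement of $\xi^{[2]}_{x_1,x_2}$ by $\xi^2_{x_2}$ match the paper's argument, so no further comparison is needed.
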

\begin{proof}  
As in the proof of Proposition \ref{p2.6} it follows for each $n\in\N$ that
\begin{align*}
V^v_{\le n} \le_{st}& V^v_{\le h(v)-1}+\int V^{x_1}_{\le h(x_1)-1}(\xi^1_{x_1}) \, V^v_{h(v)}(d x_1)
+\iint V^{x_2}_{\le n-h(v)-h(x_1)}(\xi^{[2]}_{x_1,x_2})\,V^{x_1}_{h(x_1)}(\xi^1_{x_1}, d x_2)\,V^v_{h(v)}(d x_1).
\end{align*}
Note that the distribution
of the above right-hand side does not change upon replacing
$\xi^{[2]}_{x_1,x_2}$ by $\xi^2_{x_2}$. It follows that
\begin{align*}
V^v_{\le n} \le_{st}& V^v_{\le h(v)-1}+\int V^{x_1}_{\le h(x_1)-1}(\xi^1_{x_1}) \, V^v_{h(v)}(d x_1)
+\iint V^{x_2}_{\le n-h(v)-h(x_1)}(\xi^2_{x_2})\,V^{x_1}_{h(x_1)}(\xi^1_{x_1}, d x_2)\,V^v_{h(v)}(d x_1).
\end{align*}
Proceeding inductively, since $h(x)\ge 1$ for any $x\in\BX$, we obtain for any $n\in\N$
\begin{align*}
    V^v_{\le n} \le_{st} \widetilde W^{v,h}_{\le n} \le \widetilde W^{v,h}.
\end{align*}
This concludes the proof, since with probability one $V^v_{\le n}\uparrow V^v$ as $n\to\infty$.
\end{proof}

\begin{corollary}\label{p2.8} 
Let $v\in\BX$ and $n\in\N$.
Then 
\begin{align*}
V^v\le_{st} \widetilde W^{v,n}.
 \end{align*}
\end{corollary}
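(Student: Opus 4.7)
The plan is to invoke Proposition \ref{p2.9} directly, since Corollary \ref{p2.8} is essentially the constant-$h$ specialization of that proposition. Recall from the paper's conventions that when $h\equiv n$ for some fixed $n\in\N$, the upper index $n$ is used in place of $h$, so that $\widetilde{W}^{v,n}$ denotes $\widetilde{W}^{v,h}$ with $h\equiv n$.

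First I would verify that the constant function $h(x):=n$ for all $x\in\BX$ is measurable and $\N$-valued (both trivially true for any fixed $n\in\N$). Then, applying Proposition \ref{p2.9} to this choice of $h$ immediately yields
\[
V^v \le_{st} \widetilde{W}^{v,h} = \widetilde{W}^{v,n},
\]
which is exactly the claim. No additional argument is required; in particular, there is no obstacle, since all the work (the recursive domination argument combining Proposition \ref{p2.4} with iterated conditioning) has already been carried out in the proof of Proposition \ref{p2.9}.

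The only conceptual point worth flagging is that this corollary is the natural way the general statement of Proposition \ref{p2.9} will be used downstream: the spatial branching majorant $\widetilde{W}^{v,n}$ with a uniform generation depth $n$ is the object for which one can then invoke Galton–Watson subcriticality criteria (as hinted at in Remark \ref{r:loc_bound} and Proposition \ref{p:GW_c_1}) to control $|V^v|$. So while the proof itself is a one-line specialization, the statement is singled out because the constant-$h$ version is the form actually used in applications to subcriticality and exponential moment bounds.
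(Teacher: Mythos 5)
Your proof is correct and coincides with the paper's intended argument: the paper states Corollary \ref{p2.8} without proof, precisely because it is the immediate specialization of Proposition \ref{p2.9} to the constant function $h\equiv n$, exactly as you argue.
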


\section{A criterion for subcriticality and exponential moments}\label{secsub}

We establish the setting of Section \ref{secMarkov} 
with intensity measure $\lambda$ replaced by $t\lambda$ for some $t\ge 0$. 
We use the so-called generations method to build lower bounds on $t_T$, see e.g. \cite{Men85,Zuev85,Men86,Men87,Men88}.
As is common in percolation theory, we denote the underlying probability measure by $\BP_t$, 
to stress the dependence on the intensity parameter. 

\begin{lemma}\label{l:not_triv}
We have $t_T\ge (D^*_\varphi)^{-1}$. Moreover, we have $t_T>0$ if and only if $D^*_\varphi<\infty$,
\end{lemma}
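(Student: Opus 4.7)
The plan is to derive both statements directly from the Galton--Watson domination established in Corollary \ref{c2.1} and Proposition \ref{p:GW_c_1}. The entire argument takes place under $\BP_t$, so that $\eta$ has intensity $t\lambda$ and the dominating Galton--Watson process $(W_k)_{k\ge 0}$ from just before Proposition \ref{p:GW_c_1} has Poisson offspring distribution with the rescaled parameter $tD^*_\varphi$.

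For the lower bound $t_T\ge (D^*_\varphi)^{-1}$, I would first dispose of the trivial case $D^*_\varphi=\infty$ (for which the bound is vacuous) and then fix $t$ with $tD^*_\varphi<1$ and any $v\in\BX$. Chaining Corollary \ref{c2.1} with Proposition \ref{p:GW_c_1} yields $|V^v_{\le k}|\le_{st} |W^{v,1}_{\le k}|\le_{st} W_{\le k}$ for every $k\in\N$. Taking expectations, letting $k\to\infty$ by monotone convergence on both sides, and summing the resulting geometric series gives $\BE_t|V^v|\le (1-tD^*_\varphi)^{-1}$. Since this estimate is uniform in $v$, $c^*(t)\le (1-tD^*_\varphi)^{-1}<\infty$, and the definition \eqref{e:tT} of $t_T$ then yields $t_T\ge (D^*_\varphi)^{-1}$.

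For the equivalence, the ``if'' direction is immediate from the bound just obtained. For the ``only if'' direction I would use the elementary identity $c^v_1(t)=tD_\varphi(v)$, which gives $c^v(t)\ge c^v_1(t)$ for $\lambda$-a.e.\ $v$, and hence $c^*(t)\ge c^*_1(t)=tD^*_\varphi$. If $D^*_\varphi=\infty$, then $c^*(t)=\infty$ for every $t>0$, forcing $t_T=0$.

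There is no substantial technical difficulty in this proof. The only point requiring a bit of care is the correct rescaling of the Galton--Watson offspring parameter from $D^*_\varphi$ (under $\BP$) to $tD^*_\varphi$ (under $\BP_t$), after which both claims reduce to standard facts about subcritical and supercritical Galton--Watson processes.
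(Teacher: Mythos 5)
Your argument is correct and follows essentially the same route as the paper: dominating the cluster by the Galton--Watson process with Poisson($tD^*_\varphi$) offspring via the $W^{v,1}$ branching construction and Proposition \ref{p:GW_c_1}, and using $c^*_1(t)=tD^*_\varphi$ for the ``only if'' direction. The only cosmetic difference is that you bound the cumulative sizes $|V^v_{\le k}|$ through Corollary \ref{c2.1} and pass to the limit, whereas the paper bounds each generation $c^v_n(t)\le (tD^*_\varphi)^n$ via Proposition \ref{p2.5} and sums the geometric series; these are interchangeable.
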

\begin{proof}
If $ D^*_\varphi=0$, then each Poisson point is isolated and $t_T=\infty$. If
$D^*_\varphi=\infty$, then $c^*_1(t)=t D^*_\varphi=\infty$ for any $t>0$ and $t_T=0$. 
Let $t\ge 0$ and assume that $0<D^*_\varphi<\infty$. 
The Proposition \ref{p2.5} implies that $c^v_n(t)\le \BE_t |W^{v,1}_n|$. By Proposition \ref{p:GW_c_1} we have
$\BE_t |W^{v,1}_n|\le \BE_t W_n=(c^*_1(t))^n$ for any $n\in\N$. Therefore,
$t_T\ge 1/D^*_\varphi>0$.
\end{proof}

In the remainder of the section, we shall assume that $ D^*_\varphi <\infty$.

\begin{remark}\rm
    One can attempt to improve the lower bound of $t_T$ by obtaining an estimate of the form 
\begin{align*}
    \BE_t (|V_{n+l}^v|\mid C_{\le n}^v)\le \gamma(t) |V_n^v|
\end{align*} 
for all $n\ge n_0$ and some $l, n_0 \in\N$. Then as soon as
$\gamma(t_0)<1$ there is no percolation and $t_T>t_0$. It is true that
in continuous models, obtaining such estimates for $l\ge 2$ involves
appreciable technical difficulties.
\end{remark}

By Proposition \ref{p2.3+} for $n\in\N_0$ we have
\begin{align}\label{esub}
c^v_{n+2}(t)\le c^*_2(t) {c}^v_n(t).
\end{align}
This easily implies that if $c^*_2(t)<1$, then $t<t_T$. One of the main goals of this section is 
to show that if $D^*_\varphi<\infty$ and $c^*_n(t)<1$ for some $n\in\N$, then $t<t_T$.

\begin{lemma}\label{lmoments} Let $n, k\in\N$ and assume that $D^*_\varphi<\infty$. 
Then for any $t, \delta>0$ 
\begin{align*}
\esssup_{v\in\BX} \BE_t e^{\delta |W^{v,n}_{\le k}|}<\infty.
\end{align*}
\end{lemma}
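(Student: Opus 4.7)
The plan is to dominate $|W^{v,n}_{\le k}|$ generation by generation by a scalar Galton--Watson process with Poisson offspring, since a scalar GW truncated at bounded depth has finite exponential moments of every order.

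First, Proposition \ref{p2.5} gives $W^{v,n}_j \le_{st} W^{v,1}_{jn}$ for every $j \in \N$, and applying Proposition \ref{p:GW_c_1} with $t\lambda$ in place of $\lambda$ (so that the offspring parameter becomes $tD^*_\varphi$) yields
\[
|W^{v,n}_j| \le_{st} W_{jn}, \quad j = 1, \dots, k,
\]
where $(W_i)_{i\ge 0}$ is the scalar Galton--Watson process with $W_0 = 1$ and Poisson($tD^*_\varphi$) offspring distribution. The crucial feature is that the dominating random variables on the right do not depend on $v$.

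Since $|W^{v,n}_{\le k}| = 1 + \sum_{j=1}^k |W^{v,n}_j|$, an application of H\"older's inequality yields
\[
\BE_t e^{\delta |W^{v,n}_{\le k}|} \le e^\delta \prod_{j=1}^k \bigl(\BE_t e^{k\delta |W^{v,n}_j|}\bigr)^{1/k} \le e^\delta \prod_{j=1}^k \bigl(\BE\, e^{k\delta W_{jn}}\bigr)^{1/k}.
\]
The right-hand side is independent of $v$, so it suffices to show that $\BE e^{\delta' W_m} < \infty$ for every $\delta' > 0$ and $m \in \N_0$.

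I would establish this last assertion by induction on $m$. The base $m = 0$ is immediate. For the inductive step, conditioning on $W_1 \sim \mathrm{Poisson}(tD^*_\varphi)$ and using the branching property (the subtrees rooted at the children of the root are i.i.d.\ copies of the process of depth $m-1$),
\[
\BE e^{\delta' W_m} = \BE\bigl(\BE e^{\delta' W_{m-1}}\bigr)^{W_1} = \exp\bigl( tD^*_\varphi\bigl(\BE e^{\delta' W_{m-1}} - 1\bigr) \bigr),
\]
via the Poisson MGF; this is finite whenever $\BE e^{\delta' W_{m-1}}$ is, so the induction closes. There is essentially no obstacle in the argument: the only step requiring a moment's thought is invoking H\"older to decouple the non-independent generations $|W^{v,n}_j|$, after which the problem reduces to scalar GW processes of bounded depth with Poisson offspring, whose exponential moments are handled by the elementary recursion above.
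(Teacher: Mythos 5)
Your proof is correct and follows essentially the same route as the paper: reduce to the scalar Galton--Watson process with Poisson($tD^*_\varphi$) offspring via Propositions \ref{p2.5} and \ref{p:GW_c_1}, and then use that its population up to a bounded generation has exponential moments of every order. The only (harmless) differences are that you dominate generation by generation and recombine with H\"older's inequality where the paper uses the cumulative bounds $W^{v,n}_{\le k}\le_{st}W^{v,1}_{\le kn}$ and $|W^{v,1}_{\le kn}|\le_{st}W_{\le kn}$ directly, and that you verify the Poisson Galton--Watson exponential-moment fact by an elementary induction where the paper cites \cite{Nakayama04}.
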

\begin{proof}
By Proposition \ref{p2.5}  we have $W_{\le k}^{v,n}\le_{st} W^{v,1}_{\le kn}$. Then by Proposition \ref{p:GW_c_1} we obtain 
\begin{align*}
    \esssup_{v\in\BX} \BE_t e^{\delta |W^{v,n}_{\le k}|}\le \BE_t e^{\delta W_{\le kn}}.
\end{align*}
Since the offspring distribution of $W_{\le kn}$ is Poisson with parameter $c^*_1(t)<\infty$, it is well-known that $W_{\le kn}$ 
has exponential moments of each order; see e.g. \cite{Nakayama04}.
\end{proof}

\begin{corollary}\label{cmoments} Assume that $D^*_\varphi<\infty$ and let $n\in\N$. Then for any $t, \delta>0$ 
\begin{align*}
\esssup_{v\in\BX} \BE_t e^{\delta |V_{\le n}^v|}<\infty.
\end{align*}
\end{corollary}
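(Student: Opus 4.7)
The plan is to read off the result as a direct consequence of the stochastic-domination machinery from Section~\ref{secMarkov} together with the preceding lemma. The point is that $|V^v_{\le n}|$ is dominated (uniformly in $v$) by a functional of a classical Galton--Watson process whose offspring distribution is Poisson with parameter $tD^*_\varphi<\infty$, and such a total-progeny count is well-known to have exponential moments of all orders.

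Concretely, I would proceed in three short steps. First, invoke Corollary~\ref{c2.1} to obtain the stochastic ordering
\begin{equation*}
V^v_{\le n}\le_{st} W^{v,1}_{\le n}, \qquad v\in\BX.
\end{equation*}
Since the map $\nu\mapsto e^{\delta|\nu|}$ is increasing on $\bN$ for each $\delta>0$, this ordering yields
\begin{equation*}
\BE_t e^{\delta|V^v_{\le n}|}\le \BE_t e^{\delta|W^{v,1}_{\le n}|}
\end{equation*}
for every $v\in\BX$ and every $\delta,t>0$. Second, take the essential supremum over $v\in\BX$ on both sides. Third, apply Lemma~\ref{lmoments} with the role of $n$ there played by $1$ and the role of $k$ played by our $n$; since $D^*_\varphi<\infty$, that lemma gives
\begin{equation*}
\esssup_{v\in\BX}\BE_t e^{\delta|W^{v,1}_{\le n}|}<\infty,
\end{equation*}
which finishes the proof.

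There is essentially no obstacle here, as the work has already been done: the hard parts are the inductive stochastic-comparison arguments culminating in Proposition~\ref{p2.6}/Corollary~\ref{c2.1}, and the observation (already used in Lemma~\ref{lmoments}, via~\cite{Nakayama04}) that the total size of the first $n$ generations of a Poisson Galton--Watson process has all exponential moments whenever its mean offspring number $c^*_1(t)=tD^*_\varphi$ is finite. The only thing worth double-checking is the direction of the inequality in the stochastic ordering when passing to $e^{\delta|\cdot|}$, which is straightforward because we only use monotonicity in the cardinality.
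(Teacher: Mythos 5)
Your proposal is correct and follows essentially the same route as the paper: the paper's proof also combines the domination $V^v_{\le n}\le_{st} W^{v,1}_{\le n}$ (there cited via Proposition \ref{p2.6} with first index $1$, which is exactly Corollary \ref{c2.1}) with Lemma \ref{lmoments} applied to $W^{v,1}_{\le n}$. The only cosmetic difference is that you cite Corollary \ref{c2.1} directly and spell out the monotonicity of $\nu\mapsto e^{\delta|\nu|}$, which the paper leaves implicit.
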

\begin{proof}
By Proposition \ref{p2.6} we have $V_{\le n}^v\le_{st} W^{v,1}_{\le n}$. Then the result follows from Lemma \ref{lmoments}.
\end{proof}

\begin{theorem}\label{t:subcrit}
    Let $h\colon \BX\to\N$ be a measurable bounded function and $t\ge 0$. 
Assume that $D^*_\varphi<\infty$ and $c^*_h(t):=\esssup\limits_{v\in\BX} c^v_{h(v)}(t)<1$. Then 
    \begin{align*}
        \esssup_{v\in\BX} \BE_t |\widetilde W^{v,h}|\le \frac{c^*_{\le h^*}(t)}{1-c^*_h(t)}<\infty,
    \end{align*}
    where $h^* := \|h\|_\infty$.
\end{theorem}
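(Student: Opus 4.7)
The plan is to treat the auxiliary branching process $W^{v,h}$ as a spatial Galton--Watson process, bound $\BE_t|W^{v,h}_k|$ geometrically in $k$, transfer this bound to $\widetilde W^{v,h}_k$, and then sum. First I would introduce the intensity measure $\Lambda^{v,h}_k$ of $W^{v,h}_k$. Using the conditional independence of $\{\xi^k_x : x \in W^{v,h}_k\}$ given $Z_k$, together with $\xi^k_x\overset{d}{=}\xi^x$, Campbell's formula yields the recursion
\[
\Lambda^{v,h}_{k+1}(\cdot) = \int \Lambda^x_{h(x)}(\cdot)\,\Lambda^{v,h}_k(dx),
\]
and in particular $\BE_t|W^{v,h}_{k+1}| = \int c^x_{h(x)}(t)\,\Lambda^{v,h}_k(dx)$.

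Next I would argue that for $k \ge 1$ the points of $W^{v,h}_k$ are inherited from the Poisson process $\eta$ (since $h(x)\ge 1$), so $\Lambda^{v,h}_k \ll \lambda$. The bound $c^x_{h(x)}(t)\le c^*_h(t)$ then holds $\Lambda^{v,h}_k$-a.e., and the above identity upgrades to $\BE_t|W^{v,h}_{k+1}| \le c^*_h(t)\,\BE_t|W^{v,h}_k|$. For $\lambda$-a.e.\ $v$ the base case $\BE_t|W^{v,h}_1|=c^v_{h(v)}(t)\le c^*_h(t)$ also holds, so by induction $\BE_t|W^{v,h}_k| \le (c^*_h(t))^k$ for every $k\ge 0$.

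Since $V^{x!}_{\le h(x)} = V^x_{\le h(x)} - \delta_x$, the same conditioning gives, for $k\ge 1$ and $\lambda$-a.e.\ $v$,
\[
\BE_t|\widetilde W^{v,h}_k| = \int (c^x_{\le h(x)}(t) - 1)\,\Lambda^{v,h}_{k-1}(dx) \le (c^*_{\le h^*}(t) - 1)(c^*_h(t))^{k-1}.
\]
Combining with $|\widetilde W^{v,h}_0|=1$ and summing the resulting geometric series,
\[
\BE_t|\widetilde W^{v,h}| \le 1 + (c^*_{\le h^*}(t) - 1)\sum_{k=0}^\infty (c^*_h(t))^k = \frac{c^*_{\le h^*}(t) - c^*_h(t)}{1-c^*_h(t)} \le \frac{c^*_{\le h^*}(t)}{1-c^*_h(t)},
\]
which is the claimed bound. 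Finiteness of $c^*_{\le h^*}(t)$ follows from Corollary \ref{cmoments}, since $h^*<\infty$ and $D^*_\varphi<\infty$.

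The main obstacle will be making rigorous the replacement of $c^x_{h(x)}(t)$ by the essential supremum $c^*_h(t)$ inside the integrals, which requires $\Lambda^{v,h}_k \ll \lambda$ for $k\ge 1$. This is transparent once one verifies inductively that every generation after the zeroth is built from Poisson points of $\eta$, while the $k=0$ case is absorbed into the outer essential supremum by restricting to the $\lambda$-conull set on which $c^v_{h(v)}(t)\le c^*_h(t)$.
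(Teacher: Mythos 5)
Your proof is correct and follows essentially the same route as the paper: conditioning on the branching generations, using the conditional independence of the graphs $\xi^k_x$ together with Campbell/Fubini to express expected generation sizes, bounding by the essential suprema $c^*_h(t)$ and $c^*_{\le h^*}(t)$, and summing a geometric series. The only difference is organizational: you establish the generation-wise bound $\BE_t|W^{v,h}_k|\le (c^*_h(t))^k$ and sum, whereas the paper derives and solves a self-referential inequality for $\BE_t|W^{v,h}|$; your variant has the minor advantage of not presupposing finiteness of $\BE_t|W^{v,h}|$ and of making explicit the absolute-continuity point ($\Lambda^{v,h}_k\ll\lambda$ for $k\ge 1$) that justifies replacing $c^x_{h(x)}(t)$ and $c^x_{\le h(x)}(t)$ by their essential suprema, which the paper leaves implicit.
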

\begin{proof}
By Fubini's theorem, we have
\begin{align*}
        \BE_t |W^{v,h}|& = 1 + c_{h(v)}^v(t)+\sum_{k\ge 1} \BE_t \int |V^x_{h(x)}(\xi_x^k)| \, W^{v,h}_k(dx)
        = 1 + c_{h(v)}^v(t)+\sum_{k\ge 1} \BE_t \int c^x_{h(x)}(t) \, W^{v,h}_k(dx) \\
        &\le 1+c_{h(v)}^v(t)+ c^*_h(t) (\BE_t |W^{v,h}|-1).
\end{align*}
Therefore, $\BE_t |W^{v,h}|\le (1+c_{h(v)}^v(t)- c^*_h(t))/(1- c^*_h(t))\le (1- c^*_h(t))^{-1}$ for any $v\in\BX$. 
Moreover, by Corollary \ref{cmoments} and Fubini's theorem, we obtain that for any $v\in\BX$
\begin{align*}
\BE_t |\widetilde W^{v,h}|&= c_{\le h(v)}^v(t)+\sum_{k\ge 1} \BE_t \int |V^{x!}_{\le h(x)}(\xi_x^k)| \, W^{v,h}_k(dx)
        = c_{\le h(v)}^v(t)+\sum_{k\ge 1} \BE_t \int (c_{\le h(x)}^x(t)-1) \, W^{v,h}_k(dx) \\
        &\le c_{\le h(v)}^v(t)  + (c^*_{\le h^*}(t)-1)(\BE_t |W^{v,h}|-1)
       < \frac{ c^*_{\le h^*}(t)}{1- c^*_h(t)}< \infty.
\end{align*}
This proves the result.
\end{proof}

\begin{corollary}\label{c:subcrit}
    Let $n\in\N$ and $t\ge 0$. If $D^*_\varphi<\infty$ and $c^*_n(t)<1$, then 
$c^*(t)\le c^*_{\le n}(t)/(1-c^*_n(t))<\infty$ and $t<t_T$.
\end{corollary}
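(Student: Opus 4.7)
The plan is to apply Theorem \ref{t:subcrit} with the constant choice $h\equiv n$. Under this specialization, $h^*=n$, $c^*_h(t)=c^*_n(t)$, and by hypothesis $c^*_n(t)<1$, so the theorem delivers
\begin{align*}
\esssup_{v\in\BX}\BE_t|\widetilde W^{v,n}|\le \frac{c^*_{\le n}(t)}{1-c^*_n(t)}<\infty.
\end{align*}
Next I would invoke Corollary \ref{p2.8}, which gives the stochastic domination $V^v\le_{st}\widetilde W^{v,n}$. Since the map $\mu\mapsto|\mu|$ on $\bN$ is increasing, this yields $c^v(t)=\BE_t|V^v|\le \BE_t|\widetilde W^{v,n}|$ for $\lambda$-a.e.\ $v$. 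Taking the essential supremum over $v$ then produces the quantitative bound
\begin{align*}
c^*(t)\le \frac{c^*_{\le n}(t)}{1-c^*_n(t)}<\infty,
\end{align*}
which is the first assertion.

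For the strict inequality $t<t_T$, the definition \eqref{e:tT} only directly yields $t\le t_T$ from finiteness of $c^*(t)$. To upgrade to a strict inequality, I would argue that the quantity $c^*_n(\cdot)$ is monotone non-decreasing and right-continuous in the intensity parameter (this follows for instance from the stochastic monotonicity in Proposition \ref{p2.2}/Proposition \ref{p2.2+} combined with monotone convergence applied to the explicit expression for $c^v_n$ in terms of $K_{t\lambda}$). Consequently, the open condition $c^*_n(t)<1$ persists in some right-neighborhood $[t,t+\varepsilon]$, and the same argument gives $c^*(t')<\infty$ for all $t'$ in this neighborhood, so that $t<t+\varepsilon\le t_T$.

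No step is really a serious obstacle here: the corollary is essentially a bookkeeping step combining the branching-process bound of Theorem \ref{t:subcrit} with the stochastic domination of Corollary \ref{p2.8}. The only mildly technical point is the right-continuity argument used to pass from $c^*(t)<\infty$ to strict subcriticality $t<t_T$; everything else is immediate from the two cited results.
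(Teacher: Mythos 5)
Your main argument is exactly the paper's proof: Theorem \ref{t:subcrit} with $h\equiv n$ gives $\esssup_v\BE_t|\widetilde W^{v,n}|\le c^*_{\le n}(t)/(1-c^*_n(t))<\infty$, and Corollary \ref{p2.8} transfers this bound to $c^*(t)$; the paper's own proof consists of precisely these two citations and nothing more, reading the conclusion $t<t_T$ off the definition \eqref{e:tT} without further comment.

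The extra step you add to upgrade $t\le t_T$ to $t<t_T$ is where the problem lies. You claim that $c^*_n(\cdot)$ is non-decreasing and right-continuous in the intensity, ``following from Proposition \ref{p2.2}/Proposition \ref{p2.2+}''. Neither proposition gives this: Proposition \ref{p2.2} concerns the cumulative quantities $V^v$ and $V^v_{\le n}$, while Proposition \ref{p2.2+} covers single generations only for $m\in\{1,2\}$, and the paper explicitly remarks after Proposition \ref{p2.2} that stochastic monotonicity of $V^v_m$ in $\lambda$ for higher generations is an open question (adding points can create shortcuts and move vertices into earlier generations, so $c^v_n$ need not increase with $t$ for $n\ge 3$). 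Moreover, even granting pointwise monotonicity and continuity of $t\mapsto c^v_n(t)$, right-continuity of the essential supremum $c^*_n(t)=\esssup_v c^v_n(t)$ would not follow, since an essential supremum of a decreasing family need not converge to the essential supremum of the limit. So your persistence argument for the open condition $c^*_n(t)<1$ is unjustified as written. If you do want an explicit argument for strictness rather than the paper's implicit reading, a safer route is sprinkling: write $\eta_{t+\epsilon}=\eta_t+\eta_\epsilon$ and explore the cluster of $v$ alternately through $t$-clusters (mean size at most $c^*(t)<\infty$, which you have already established) and the Poisson points of $\eta_\epsilon$ attached to them (mean at most $\epsilon D^*_\varphi$ per explored vertex); with the domination results of Section \ref{secMarkov} this yields $c^*(t+\epsilon)\le c^*(t)/(1-\epsilon D^*_\varphi c^*(t))<\infty$ whenever $\epsilon D^*_\varphi c^*(t)<1$, and hence $t<t_T$.
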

\begin{proof}
The assertion follows from Corollary \ref{p2.8} and Theorem \ref{t:subcrit} with $h\equiv n$. 
\end{proof}

In the following, we generalize Corollary \ref{c:subcrit} for the case where it is known at which point of the underlying space the mean cluster size takes its maximal value. 

\begin{proposition}[A special criterion for subcriticality]\label{p:spetial_crit}
Let $v_0\in\BX$, $n\in\N$ and $t\ge 0$. If $D^*_\varphi<\infty$, $c^{v_0}(t)=c^*(t)$ and $c_n^{v_0}(t)<1$, 
then $c^*(t)\le c^{v_0}_{\le n-1}(t)/(1-c^{v_0}_n(t))<\infty$ and $t<t_T$.
\end{proposition}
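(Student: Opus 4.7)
The idea is to reproduce the argument behind Corollary \ref{c:subcrit}, but specialized to the single base point $v_0$ so that the role of the uniform bound $c^*_n(t)<1$ is taken over by the pointwise bound $c^{v_0}_n(t)<1$, with the maximality hypothesis $c^{v_0}(t)=c^*(t)$ used to close the resulting scalar inequality. First I will apply Corollary \ref{c:clus_dom} with $v=v_0$ and the $n$ of the statement to get
\begin{equation*}
V^{v_0}\le_{st} V^{v_0}_{\le n-1}+\int V^x\bigl(\xi^{[n]}_x\bigr)\,V^{v_0}_n(dx).
\end{equation*}
Taking expectations of the cardinality and using that the family $\{\xi^{[n]}_x\}$ is conditionally independent of $V^{v_0}_n$ with $\xi^{[n]}_x\overset{d}{=}\xi^x$, Campbell's formula converts the right-hand integral to $\int c^x(t)\,\Lambda^{v_0}_n(dx)$, leading to
\begin{equation*}
c^{v_0}(t)\le c^{v_0}_{\le n-1}(t)+\int c^x(t)\,\Lambda^{v_0}_n(dx).
\end{equation*}

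Next I will exploit the maximality assumption. Since $V^{v_0}_n\subset\eta$ for $n\ge 1$, the intensity measure $\Lambda^{v_0}_n$ is absolutely continuous with respect to $t\lambda$, so the hypothesis $c^{v_0}(t)=c^*(t)$ delivers $c^x(t)\le c^{v_0}(t)$ for $\Lambda^{v_0}_n$-almost every $x$. Combined with $\Lambda^{v_0}_n(\BX)=c^{v_0}_n(t)$, this yields
\begin{equation*}
c^{v_0}(t)\le c^{v_0}_{\le n-1}(t)+c^{v_0}(t)\,c^{v_0}_n(t).
\end{equation*}
Corollary \ref{cmoments} gives $c^{v_0}_{\le n-1}(t)<\infty$, and together with $c^{v_0}_n(t)<1$ this rearranges into
\begin{equation*}
c^{v_0}(t)\le \frac{c^{v_0}_{\le n-1}(t)}{1-c^{v_0}_n(t)}<\infty.
\end{equation*}
Since $c^*(t)=c^{v_0}(t)$, the claimed bound on $c^*(t)$ follows, and in particular $t<t_T$.

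The main obstacle will be making the rearrangement rigorous when $c^{v_0}(t)$ is not a priori known to be finite, since in the extended reals $a\le c+ab$ with $c<\infty$ and $0\le b<1$ does not by itself force $a<\infty$ when $ab$ is $\infty\cdot b$. To handle this I plan to first apply Proposition \ref{p2.4} to the truncated cluster, obtaining
\begin{equation*}
c^{v_0}_{\le N+n-1}(t)\le c^{v_0}_{\le n-1}(t)+\int c^x_{\le N-1}(t)\,\Lambda^{v_0}_n(dx)
\end{equation*}
for every $N\in\N$, each of whose terms is finite by Corollary \ref{cmoments}. Bounding $c^x_{\le N-1}(t)\le c^x(t)\le c^{v_0}(t)$ and carefully passing to the limit $N\to\infty$ via monotone convergence---combined with an inductive control exploiting the contraction factor $c^{v_0}_n(t)<1$---should give the uniform bound $c^{v_0}_{\le N}(t)\le c^{v_0}_{\le n-1}(t)/(1-c^{v_0}_n(t))$ for all $N$, establishing $c^{v_0}(t)<\infty$ simultaneously with the final bound and without circularity.
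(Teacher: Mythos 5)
Your core argument is exactly the paper's proof of Proposition \ref{p:spetial_crit}: apply Corollary \ref{c:clus_dom} with $v=v_0$, take expectations via Fubini/Campbell to get $c^{v_0}(t)\le c^{v_0}_{\le n-1}(t)+\int c^x(t)\,\Lambda^{v_0}_n(dx)$, bound $c^x(t)\le c^*(t)=c^{v_0}(t)$ for $\Lambda^{v_0}_n$-a.e.\ $x$ (your absolute-continuity remark is a slightly more careful justification of this step than the paper gives), and then rearrange using $c^{v_0}_{\le n-1}(t)<\infty$ and $c^{v_0}_n(t)<1$. Up to that point the two proofs coincide step for step, and the paper stops there: it writes $c^{v_0}(t)\le c^{v_0}_{\le n-1}(t)+c^{v_0}_n(t)c^{v_0}(t)$ and immediately concludes $c^{v_0}(t)\le c^{v_0}_{\le n-1}(t)/(1-c^{v_0}_n(t))<\infty$.

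The only place you diverge is the final paragraph, where you try to justify the rearrangement when $c^{v_0}(t)$ is not a priori finite. The concern is legitimate (in the extended reals $a\le b+qa$ with $b<\infty$, $q<1$ does not exclude $a=\infty$), but the patch you sketch does not close it: bounding $c^x_{\le N-1}(t)\le c^x(t)\le c^{v_0}(t)$ is circular in precisely the case you want to exclude, and the ``inductive control'' would require a truncated comparison of the form $c^x_{\le N-1}(t)\le c^{v_0}_{\le N-1}(t)$, which does not follow from the hypothesis $c^{v_0}(t)=c^*(t)$ --- maximality is assumed only for the full mean cluster size, not for its generation-truncated versions, so iterating the Proposition \ref{p2.4} recursion gives you no contraction. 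Note, however, that the paper's own proof is no more careful here: it performs the rearrangement without comment. So your proposal matches the published argument in substance; the appended truncation plan is an unfinished attempt to repair a point the authors pass over silently, and making it rigorous would require a genuinely new input rather than the monotone-convergence/induction scheme you describe.
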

\begin{proof}
    By Corollary \ref{c:clus_dom} and Fubini's theorem, we have
    \begin{align*}
        c^*(t)=c^{v_0}(t)\le c^{v_0}_{\le n-1}(t) + \BE_t \int c^x(t) \, V^{v_0}_n(d x)\le c^{v_0}_{\le n-1}(t)+c^{v_0}_{n}(t)c^{v_0}(t).
    \end{align*}
    Therefore, $c^{v_0}(t)\le c^{v_0}_{\le n-1}(t)/(1-c^{v_0}_n(t))<\infty$.
\end{proof}

\begin{theorem}\label{t:subcrit_ex}
    Under the conditions of Theorem \ref{t:subcrit} there exists $\delta=\delta(t, h^*)>0$ such that
    \begin{align*}
       \esssup_{v\in\BX} \BE_t e^{\delta |\widetilde W^{v,h}|}<\infty.
    \end{align*}
\end{theorem}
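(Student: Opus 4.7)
The plan is to establish the result by induction over a generation cutoff of $\widetilde W^{v,h}$, combining the spatial branching structure with the uniform exponential moment bound from Corollary \ref{cmoments} and the strict subcriticality $c^*_h(t)<1$. The starting observation, which follows directly from the construction of $\widetilde W^{v,h}$, is that conditionally on $\xi^0_v$,
\[
|\widetilde W^{v,h}_{\le k+1}| = |V^v_{\le h(v)}| + \sum_{x\in V^v_{h(v)}} S_x^{(k)},
\]
where the $S_x^{(k)}$ are conditionally independent given $\xi^0_v$, and each $S_x^{(k)}$ has the same distribution as $|\widetilde W^{x,h}_{\le k}|-1$. Setting $\phi_k(v,\delta):=\BE_t e^{\delta|\widetilde W^{v,h}_{\le k}|}$ and $L_k(\delta):=\esssup_v \phi_k(v,\delta)$, taking conditional exponential moments, using $|V^v_{\le h(v)}|=|V^v_{\le h(v)-1}|+|V^v_{h(v)}|$ and $e^{-\delta}L_k(\delta)\le L_k(\delta)$, yields the central recursion
\[
L_{k+1}(\delta) \le \esssup_{v\in\BX}\BE_t\bigl[e^{\delta|V^v_{\le h(v)-1}|}\,L_k(\delta)^{|V^v_{h(v)}|}\bigr].
\]

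It therefore suffices to find $\delta>0$ and $M\ge e^\delta$ (ensuring the base case $L_0(\delta)=e^\delta\le M$) such that
\[
\esssup_{v\in\BX}\BE_t\bigl[e^{\delta|V^v_{\le h(v)-1}|}\,M^{|V^v_{h(v)}|}\bigr] \le M;
\]
then induction gives $L_k(\delta)\le M$ for every $k$, and monotone convergence along $\widetilde W^{v,h}_{\le k}\uparrow \widetilde W^{v,h}$ yields $\esssup_v\BE_t e^{\delta|\widetilde W^{v,h}|}\le M<\infty$. To construct such $\delta,M$, write $\epsilon:=\log M$ and $u_v:=\delta|V^v_{\le h(v)-1}|+\epsilon|V^v_{h(v)}|$. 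Using $e^u\le 1+u+\tfrac12 u^2 e^u$, the domination $u_v\le(\delta\vee\epsilon)|V^v_{\le h^*}|$ and Corollary \ref{cmoments} yield, for any fixed small $\delta_0>0$, a constant $B=B(t,h^*,\delta_0)$ independent of $v$ such that $\BE_t u_v^2 e^{u_v}\le B(\delta^2+\epsilon^2)$ whenever $\delta,\epsilon\in[0,\delta_0]$. With $m:=c^*_h(t)<1$ and $C_1:=c^*_{\le h^*}(t)$, this gives
\[
\BE_t e^{u_v} \le 1+\delta C_1+\epsilon m+\tfrac{B}{2}(\delta^2+\epsilon^2),
\]
and comparing with $e^\epsilon\ge 1+\epsilon$ reduces the desired inequality to $\delta C_1+B(\delta^2+\epsilon^2)\le\epsilon(1-m)$. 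Choosing $\delta=c\epsilon$ with $c:=\min\{(1-m)/(2C_1),1\}$ and then $\epsilon>0$ small enough makes this hold and automatically ensures $\delta\le\epsilon\le\delta_0$.

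The main technical step is the uniform quadratic remainder estimate $\BE_t u_v^2 e^{u_v}\le B(\delta^2+\epsilon^2)$; here Corollary \ref{cmoments}, which supplies exponential moments of $|V^v_{\le h^*}|$ uniformly in $v$, is essential, since otherwise the constant $B$ would not be controllable independently of the base point. All other ingredients---the recursive decomposition, the induction on $k$, and the choice of parameters---are then routine once this uniform estimate is in hand.
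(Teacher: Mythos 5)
Your proof is correct, but it follows a genuinely different route from the paper. You work directly with the truncated branching process $\widetilde W^{v,h}_{\le k}$, use the branching property to get the recursion $L_{k+1}(\delta)\le \esssup_v \BE_t\big[e^{\delta|V^v_{\le h(v)-1}|}L_k(\delta)^{|V^v_{h(v)}|}\big]$ for $L_k(\delta)=\esssup_v\BE_t e^{\delta|\widetilde W^{v,h}_{\le k}|}$, and then exhibit a fixed point $M=e^{\epsilon}$ by a second-order Taylor bound, where the uniform remainder estimate comes from Corollary \ref{cmoments} and the drift condition from $c^*_h(t)<1$ and $c^*_{\le h^*}(t)<\infty$; monotone convergence finishes. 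The paper instead runs a one-point-at-a-time exploration process $(Y_k,Z_k)$, proves a uniform exponential moment for the stopping time $\tau=|W^{v,h}|$ via a Lyapunov/test-function criterion (citing Kalashnikov), bounds $\BE_t e^{\delta|Y_k|}$ geometrically through the Galton--Watson domination of Propositions \ref{p2.6} and \ref{p:GW_c_1}, and glues the two by Cauchy--Schwarz. Both arguments rest on the same two pillars ($c^*_h(t)<1$ and uniform exponential moments of $|V^v_{\le h^*}|$), but yours is more self-contained and quantitative (it produces an explicit bound $M$ without the exploration machinery or Cauchy--Schwarz loss), while the paper's skeleton has the advantage of being reusable verbatim for the diameter bound in Theorem \ref{t:diameter_ex}, where a generating-function recursion is less natural. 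Two routine points you assert without detail but which do hold: the subtree decomposition with conditionally independent $S^{(k)}_x$ is exactly the branching property built into the paper's recursive construction of $\{\xi^k_x\}$, and substituting $\phi_k(x,\delta)\le L_k(\delta)$ at the points $x$ of $V^v_{h(v)}$ is legitimate because the intensity measure of that point process is absolutely continuous with respect to $\lambda$, so the exceptional $\lambda$-null set is a.s.\ avoided; note also that your $\delta$ depends on $c^*_h(t)$ and not merely on $(t,h^*)$, but the same is true of the paper's proof.
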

\begin{proof}
To describe the total population sizes of $W^{v,h}$ and $\widetilde W^{v,h}$, we define an exploration process. 
Let's consider the following spatial Markov process $(X_k)_{k\ge 0}=(Y_k,Z_k)_{k\ge 0}$ 
along with a sequence of families of random graphs $\{\xi^k_x:x\in Z_k\}$, $k\in\N_0$, recursively as follows. We set 
\begin{align*}
    X_0=(\delta_v,\delta_v), \quad X_1=(V^{v}_{\le h(v)},V^{v}_{ h(v)}),
\end{align*}
 and $\{\xi^0_x:x\in Z_0\}:=\{\xi^v\}$. Given $k\ge 1$, $(X_n)_{n\le k}$ and $\big(\{\xi^n_x:x\in Z_n\}\big)_{n\le k-1}$
we let $\{\xi^k_x:x\in Z_k\}$ be a family of random graphs which are conditionally independent given $Z_k$ and $\BP$-a.s.
\begin{align*}
    \BP(\xi^k_x\in \cdot \mid Z_k)=\BP (\xi^x\in \cdot),  \quad x\in Z_{k}.
\end{align*}  
Then we define
\begin{align*}
    Y_{k+1}=&  Y_k + \int \I\{d(v,x)=\min_{y\in Z_k} d(v,y)\} V^{x!}_{\le h(x)}(\xi_x^k) \, Z_k(d x), \\
    Z_{k+1}=& Z_k+\int \I\{d(v,x)=\min_{y\in Z_k} d(v,y)\} (V^x_{h(x)}(\xi_x^k)-\delta_x) \, Z_k(d x).
\end{align*}
Let $\tau:=\min(k\ge 1 : Z_k=\emptyset)$. Note that $\tau$ is a
stopping time w.r.t. $\{Z_k\}_{k\in\N}$. Moreover, $\tau=|W^{v,h}|$
and $Y_\tau=\widetilde W^{v,h}$ in distribution. Then we can use the
well-known test function criteria to prove the existence of an
exponential moment for $\tau$ (see \cite[Corollary 2,
p. 115]{Kalashnikov94}). Let $\delta, \varepsilon>0$ and
$\tau_k:=\min(\tau,k)$ for $k\ge 1$. Then for any $k\ge 1$ with
probability one
    \begin{align*}
        e^{\delta \tau_k}\le e^{\delta \tau_k+\varepsilon|Z_{\tau_k}|}=e^{\varepsilon|Z_0|}+\sum_{m=0}^{\tau_k-1}\left( e^{\delta(m+1)+\varepsilon|Z_{m+1}|}- e^{\delta m+\varepsilon|Z_{m}|}\right).
    \end{align*}
    Note that by Corollary \ref{cmoments} we have as $\varepsilon\to 0$
    \begin{align*}
        \BE_t & \left( e^{\delta(m+1)+\varepsilon|Z_{m+1}|}- e^{\delta m+\varepsilon|Z_{m}|}\mid Z_{m}\right)\\ 
        &= e^{\delta m+\varepsilon|Z_{m}|} \BE_t  \left( \exp\left(\delta+\varepsilon \int \I\{d(v,x)=\min_{y\in Z_m} d(v,y)\} (|V^x_{h(x)}(\xi_x^m)|-1) 
\, Z_m(d x)\right)- 1\mid Z_{m}\right)\\
        &\sim e^{\delta m+\varepsilon|Z_{m}|}  \left( e^\delta\left(1+ \varepsilon \int \I\{d(v,x)=\min_{y\in Z_m} d(v,y)\} (c^x_{h(x)}(t)-1) 
\, Z_m(d x)\right)- 1\right) \\
        &\le e^{\delta m+\varepsilon|Z_{m}|} \left( e^\delta\left(1+ \varepsilon (c^*_h(t)-1)\right)- 1\right).
    \end{align*}   
   Then there exist $\varepsilon, \delta>0$ such that for any $m\ge 0$
   \begin{align*}
       \BE_t \left( e^{\delta(m+1)+\varepsilon|Z_{m+1}|}- e^{\delta m+\varepsilon|Z_{m}|}\right)=\BE_t \,  \BE_t \left( e^{\delta(m+1)+\varepsilon|Z_{m+1}|}- e^{\delta m+\varepsilon|Z_{m}|}\mid Z_{m}\right)\le 0.
   \end{align*}
Therefore, $\BE_t e^{\delta \tau_k}\le \BE_t e^{\varepsilon|Z_0|}=e^\varepsilon$ for any $k\ge 1$. Letting $k\to\infty$ we obtain the light tail property for $\tau$ uniformly in $v\in\BX$. 

Note that by Propositions \ref{p2.6} and \ref{p:GW_c_1} we have for any $k\ge 1$
\begin{align*}
    \BE_t e^{\delta |Y_k|}&=\BE_t \left(e^{\delta |Y_{k-1}|} \BE_t
    \left(\exp\left(\delta \int \I\{d(v,x)=\min_{y\in Z_{k-1}} d(v,y)\} |V^{x!}_{\le h(x)}(\xi_x^{k-1})|\, Z_{k-1}(d x)\right)\mid X_{k-1}\right)\right) \\
    &=\BE_t \left(e^{\delta |Y_{k-1}|} \int \I\{d(v,x)=\min_{y\in Z_{k-1}} d(v,y)\} \BE_t( e^{\delta |V^{x!}_{\le h(x)}(\xi_x^{k-1})|}\mid Z_{k-1})\, Z_{k-1}(d x)\right) \\
    &\le \BE_t e^{\delta |Y_{k-1}|} \BE_t e^{ \delta  (W_{\le  h^*}-1)} \le \left(\BE_t e^{ \delta  (W_{\le  h^*}-1)}\right)^k.
\end{align*}

    By Theorem \ref{t:subcrit} $|\widetilde W^{v,h}|$ is a proper random variable. Then, by Fubini's theorem, Cauchy–Schwarz inequality, and the previous inequality, we also have 
    \begin{align*}
    \BE_t e^{ \delta |\widetilde W^{v,h}| }&= \BE_t \left(e^{ \delta |\widetilde W^{v,h}| }\sum_{k=1}^\infty \I\{\tau=k\}\right)
        =\sum_{k=1}^\infty \BE_t (e^{\delta |Y_k|} \I\{\tau=k\}) \\
        &\le \sum_{k=1}^\infty \sqrt{\BE_t e^{2\delta |Y_k|} \BP_t(\tau=k)}
        \le \sum_{k=1}^\infty \sqrt{\big(\BE_t e^{2\delta (W_{\le h^*}-1)}\big)^k \BP_t(\tau=k)}.
    \end{align*}
Since $W_{\le  h^*}$ has exponential moments of all orders (see e.g. \cite{Nakayama04}) 
and $\lim\limits_{\delta\to 0} \BE_t e^{2\delta (W_{\le  h^*}-1)}=1$ we get the required result from the uniform light tail property for $\tau$. 
\end{proof}

\begin{corollary}\label{c:subcrit_ex}
 Under the conditions of Corollary \ref{c:subcrit} there exists $\delta\equiv\delta(t,n)>0$ such that
    \begin{align}\label{e:lightailed}
       \esssup_{v\in\BX} \BE_t e^{\delta |V^{v}|}<\infty.
    \end{align}
\end{corollary}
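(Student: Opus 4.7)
The corollary is a direct application of Theorem \ref{t:subcrit_ex} combined with the stochastic domination already established in Corollary \ref{p2.8}. The plan is to specialize the branching process bound to constant $h$ and then transfer the exponential integrability from $|\widetilde W^{v,n}|$ to $|V^v|$.

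More concretely, I would first set $h\equiv n$, which is a measurable bounded function with $h^* = n$, and verify that the hypotheses of Theorem \ref{t:subcrit} (and hence of Theorem \ref{t:subcrit_ex}) are satisfied: we are assuming $D^*_\varphi<\infty$ and $c^*_h(t)=c^*_n(t)<1$. Theorem \ref{t:subcrit_ex} then delivers some $\delta=\delta(t,n)>0$ such that
\begin{align*}
\esssup_{v\in\BX}\BE_t e^{\delta|\widetilde W^{v,n}|}<\infty.
\end{align*}

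Next, I would invoke Corollary \ref{p2.8}, which gives the stochastic inequality $V^v\le_{st}\widetilde W^{v,n}$ for every $v\in\BX$. In particular, $|V^v|\le_{st}|\widetilde W^{v,n}|$. Since the map $k\mapsto e^{\delta k}$ is increasing, the definition of stochastic order yields
\begin{align*}
\BE_t e^{\delta|V^v|}\le \BE_t e^{\delta|\widetilde W^{v,n}|}
\end{align*}
for $\lambda$-a.e.\ $v\in\BX$. Taking the essential supremum on both sides gives \eqref{e:lightailed}.

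There is really no obstacle here; the work was done in building up Propositions \ref{p2.5}, \ref{p2.6}, \ref{p2.9} (yielding the domination by $\widetilde W^{v,n}$) and in Theorem \ref{t:subcrit_ex} (which supplies the exponential moment for the branching majorant). The present corollary is just the packaging of these two ingredients in the case $h\equiv n$.
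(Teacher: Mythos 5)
Your argument is correct and coincides with the paper's own proof, which also combines Corollary \ref{p2.8} (the domination $V^v\le_{st}\widetilde W^{v,n}$) with Theorem \ref{t:subcrit_ex} applied to $h\equiv n$. The extra detail you spell out (monotonicity of $\mu\mapsto e^{\delta|\mu|}$ to pass from the stochastic order to the moment bound) is exactly what the paper leaves implicit.
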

\begin{proof}
    The claim follows from Corollary \ref{p2.8} and Theorem \ref{t:subcrit_ex} with $h\equiv n$. 
\end{proof}

\begin{theorem}\label{tmoments_2}
Suppose that $t<t_T$. Then there exists $\delta\equiv\delta(t)>0$ such that
\eqref{e:lightailed} holds.
\end{theorem}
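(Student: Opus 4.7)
The plan is to reduce Theorem \ref{tmoments_2} to Theorem \ref{t:subcrit_ex} by producing a bounded measurable function $h:\BX\to\N$ satisfying $c^*_h(t)<1$. Once such $h$ is in hand, Theorem \ref{t:subcrit_ex} yields $\delta>0$ with $\esssup_{v\in\BX} \BE_t e^{\delta|\widetilde W^{v,h}|}<\infty$, and the stochastic domination $V^v\le_{st}\widetilde W^{v,h}$ provided by Proposition \ref{p2.9} then immediately gives \eqref{e:lightailed}.

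To construct such an $h$, first observe that $t<t_T$ forces $t_T>0$, hence $D^*_\varphi<\infty$ by Lemma \ref{l:not_triv}, and by definition $c^*(t)<\infty$. Since $c^v_0(t)=1$ one has $c^*(t)\ge 1$; the degenerate case $c^*(t)=1$ forces every cluster to be a singleton $\lambda$-a.e.\ and the claim holds trivially. Assuming $c^*(t)>1$, I would fix any integer $N>c^*(t)-1$. Then for $\lambda$-a.e.\ $v$ one has $\sum_{n=1}^N c^v_n(t)\le c^v(t)-1\le c^*(t)-1$, so by a simple averaging bound
\[
\min_{1\le n\le N} c^v_n(t)\;\le\; \frac{c^*(t)-1}{N}\;=:\;\gamma\;<\;1.
\]
I would then set $h(v)$ to be the smallest $n\in\{1,\ldots,N\}$ attaining this minimum. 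Since $v\mapsto c^v_n(t)=\BE_t|V^v_n|$ is measurable for each fixed $n$, the resulting $h$ is measurable, bounded by $N$, and satisfies $c^*_h(t)\le\gamma<1$, so all hypotheses of Theorem \ref{t:subcrit_ex} are met.

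The main (and essentially only) conceptual obstacle is that $c^*(t)<\infty$ does \emph{not} obviously force $c^*_n(t)<1$ for any single $n$: even when each cluster expectation is finite, the essential supremum $\esssup_v c^v_n(t)$ can fail to decay in $n$ because the points at which a large $c^v_n(t)$ is attained may depend on $n$. For this reason Corollary \ref{c:subcrit_ex} cannot be invoked directly. The variable-$h$ version of the criterion (Theorem \ref{t:subcrit_ex}) together with the pigeonhole argument above circumvents this difficulty by exploiting the uniform summability $\sum_n c^v_n(t)\le c^*(t)$ rather than any uniform bound on individual generations.
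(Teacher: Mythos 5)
Your proof is correct and follows essentially the same route as the paper: both arguments use the summability $\sum_n c^v_n(t)\le c^*(t)<\infty$ to pigeonhole, for $\lambda$-a.e.\ $v$, a generation index $h(v)$ bounded by an integer of order $c^*(t)$ with $c^v_{h(v)}(t)$ uniformly below $1$, and then conclude via Proposition \ref{p2.9} and Theorem \ref{t:subcrit_ex}. The only cosmetic difference is that the paper takes $h(v)$ to be the first $i\le\lceil 2c^*(t)\rceil$ with $c^v_i(t)\le 1/2$, whereas you take the argmin over $\{1,\dots,N\}$ with $N>c^*(t)-1$; both yield a bounded measurable $h$ with $c^*_h(t)<1$.
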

\begin{proof} Since $t<t_T$ we have $c^*_1(t)=t D^*_\varphi\le c^*(t)<\infty$. 
Define $N:=\lceil 2 c^*(t) \rceil$ and 
\begin{align*}
A_i:=\{ v\in\BX : c^{v}_i(t)\le 1/2\}\setminus \bigg(\bigcup_{j=1}^{i-1} A_j\bigg),\quad i\in\N.
\end{align*} 
Then $\lambda(\bigcap_{i=1}^{N} A^c_i)=0$, since otherwise $c^*(t)> N/2\ge c^*(t)$. 
Let $n$ be the smallest number such that $\lambda(\bigcap_{i=1}^{n} A_{i}^c)=0$.
By Proposition \ref{p2.9} we have that  
$
    V^v\le_{st} \widetilde W^{v,h},
$
where $h(v):=i$ for $v\in A_i$ and $h(v):=1$, otherwise.
Therefore, the required result follows from Theorem \ref{t:subcrit_ex}.
\end{proof}

\begin{example}\label{ex:hyperbolic}\rm Take $\BX$ as the $d$-dimensional hyperbolic space $\BH^d$ for some $d\ge 2$
equipped with the hyperbolic metric $d_{\BH^d}$; ; see e.g.\ \cite{HLS24} and the references given there. 
Assume that $\lambda$ is given by the Haar measure $\mathcal{H}^d$ on $\R^d$.
Assume that the connection function is given by $\varphi(x,y)=\tilde{\varphi}(d_{\BH^d}(x,y))$ for
some measurable $\tilde\varphi\colon\R_+\to[0,1]$. 
Fix a point $o\in \BH^d$.
Since the space $\BH^d$ is homogeneous, we can argue as
in Remark \ref{r:432} to see that
\begin{align*}
t_c=\sup\{t\ge 0: \BP_t(|C^o|<\infty)=0),\quad
t_T=\sup\big\{t\ge 0: \BE_t |C^o|<\infty\big\}.
\end{align*}
It was proved in \cite{DicksonHeydenreich25} that 
$t_c<\infty$ if and only if $\int\varphi(o,x)\,\mathcal{H}^d(dx)>0$. In accordance with our 
Lemma \ref{l:not_triv} it was also shown there that
$t_T>0$ if and only if $\int\varphi(o,x)\,\mathcal{H}^d(dx)<\infty$.
Assume now that $\int\varphi(0,x)\,\mathcal{H}^d(dx)\in(0,\infty)$. It was proved in \cite{DicksonHeydenreich25} 
that $t_c=t_T$. Hence our 
Theorem \ref{tmoments_2} yields a strong sharp phase transition at $t_c$, just as in the stationary (unmarked) Euclidean case.
\end{example}

\section{Diameter distribution}\label{secDiameter}

We establish the setting of Section \ref{secMarkov} with an intensity measure $t\lambda$ for $t\ge 0$. 
Denote by 
\begin{align*}
    d_m(A_1, A_2):=\sup_{x\in A_1, y\in A_2} d(x,y)
\end{align*}
the maximum distance between the points of $A_1,A_2\subset\BX$, 
where we recall that $d(\cdot,\cdot)$ denotes the metric on $\BX$. We use
the convention that $d_m(A, \emptyset):=0$ for any
$A\subset\BX$.  We will also use the same notations for $\mu\in\bG$ identifying $A$ with $V(\mu)$.

\begin{lemma}\label{l:subadditive_D}
    Let $A_1,A_2\subset \BX$, $A=A_1 \cup A_2$, and $v\in A$. If $A_1\cap A_2\ne\emptyset$, then
    \begin{align*}
        D(A)\le \min(D(A_1)+D(A_2), 2 d_m(v,A)).
    \end{align*}
\end{lemma}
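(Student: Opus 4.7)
The two bounds are independent and follow from the triangle inequality; I would treat them separately.

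For the bound $D(A) \le 2 d_m(v,A)$, I would simply observe that $v \in A$, so for any $x, y \in A$ both $d(x,v)$ and $d(v,y)$ are bounded by $\sup_{z \in A} d(v,z) = d_m(v,A)$ (interpreting $v$ as the singleton $\{v\}$ in the definition of $d_m$). The triangle inequality then yields $d(x,y) \le d(x,v) + d(v,y) \le 2 d_m(v,A)$, and taking the supremum over $x, y \in A$ gives the claim.

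For the bound $D(A) \le D(A_1) + D(A_2)$, I would fix once and for all a point $z \in A_1 \cap A_2$, which exists by the hypothesis $A_1 \cap A_2 \ne \emptyset$. Given arbitrary $x, y \in A = A_1 \cup A_2$, I would do a short case analysis based on which of $A_1, A_2$ contains $x$ and $y$. If $\{x,y\} \subset A_i$ for some $i \in \{1,2\}$, then $d(x,y) \le D(A_i) \le D(A_1) + D(A_2)$ directly. If instead $x \in A_i$ and $y \in A_j$ with $i \ne j$, then $x, z \in A_i$ and $z, y \in A_j$, so the triangle inequality through $z$ gives $d(x,y) \le d(x,z) + d(z,y) \le D(A_i) + D(A_j) = D(A_1) + D(A_2)$. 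Taking the supremum over $x, y \in A$ completes the proof.

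There is no real obstacle here: the whole argument is a two-line application of the triangle inequality, and the nontrivial input (the existence of the bridging point $z$) is exactly what the hypothesis $A_1 \cap A_2 \ne \emptyset$ provides. The only thing to be a little careful about is the mild notational abuse in writing $d_m(v, A)$ for $d_m(\{v\}, A)$, which I would either flag in passing or absorb into the already stated convention that sets and their supporting measures are identified.
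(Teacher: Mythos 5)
Your proposal is correct and follows essentially the same route as the paper: both bounds rest on the triangle inequality, with the cross-distance $d_m(A_1,A_2)$ controlled via a bridging point in $A_1\cap A_2$; your explicit case analysis over where $x,y$ lie is just a rephrasing of the paper's identity $D(A)=\max(D(A_1),D(A_2),d_m(A_1,A_2))$.
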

\begin{proof}
    Note that for any $x\in A_1\cap A_2$ by the triangle inequality, we have 
    \begin{align*}
        d_m(A_1,A_2)\le d_m(x,A_1)+d_m(x,A_2)\le d_m(A_1\cap A_2,A_1)+d_m(A_1\cap A_2,A_2)\le D(A_1)+D(A_2).
    \end{align*}
    Therefore, the result follows from the fact that $D(A)=\max(D(A_1),D(A_2),d_m(A_1,A_2))$, and a simple consequence of the triangle inequality, i.e. $D(A)\le 2 d_m(v,A)$.
\end{proof}

Denote for $v\in\BX$ and $n\in\N$ the maximum length of the edges between generations $n-1$ and $n$ by
\begin{align*}
    E^v_n:=\max_{x\in V^v_{n-1}, y\in V^v_n : x\sim y} d(x,y).
\end{align*}
We also define by $E^v_{\le n}:=\max(E^v_1,E^v_2,\dots,E^v_n)$ and $E^v:=\sup_{n\in\N} E^v_{\le n}$ the maximal length among the edges between generations in $C^v_{\le n}$ and $C^v$ respectively. Note that $E^v_1 = d_m(v,V^v_1)$. For $v\in\BX$ and $t, r\ge 0$ we denote by 
\begin{align*}
     \phi^v_t(r):=1-\exp\left( - t \int_{B^c_r(v)} \varphi(v,x) \, \lambda(d x)\right),
\end{align*}
where $B_r(v):=\{x\in\BX : d(v,x)\le r \}$ is the closed ball of radius $r$ centered at a point $v$ in $\BX$. 
We also denote by $\phi^*_t(r):=\esssup_{v\in\BX}\phi^v_t(r)$.

\begin{lemma}
    Let $v\in\BX$ and $t, r\ge 0$. Then $\BP_t(E^v_1>r)=\phi^v_t(r)\le \phi^*_t(r)$.
\end{lemma}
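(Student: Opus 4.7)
The plan is to reduce the claim to the standard thinning/restriction properties of a Poisson process. The first observation is that, by construction, the first generation $V^v_1$ is obtained from $\eta$ by independently keeping each point $x\in\eta$ with probability $\varphi(v,x)$. Hence $V^v_1$ is a (possibly non-homogeneous) Poisson process on $\BX$ with intensity measure $t\varphi(v,\cdot)\,\lambda$. In particular, for any measurable $B\subset\BX$, the count $V^v_1(B)$ is Poisson with parameter $t\int_B\varphi(v,x)\,\lambda(dx)$, and counts over disjoint sets are independent.

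Next I would rewrite the event $\{E^v_1>r\}$ in terms of a single count. By the definition of $E^v_1$ and the convention $d_m(v,\emptyset)=0$, one has $E^v_1>r$ if and only if there exists $y\in V^v_1$ with $d(v,y)>r$, that is, $y\in B_r^c(v)$. Equivalently,
\begin{align*}
\{E^v_1>r\}=\{V^v_1(B_r^c(v))\ge 1\}.
\end{align*}

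Finally I would apply the Poisson void probability to this count. Since $V^v_1(B_r^c(v))$ is Poisson with parameter $t\int_{B_r^c(v)}\varphi(v,x)\,\lambda(dx)$, we obtain
\begin{align*}
\BP_t(E^v_1>r)=1-\BP_t(V^v_1(B_r^c(v))=0)=1-\exp\Bigl(-t\int_{B_r^c(v)}\varphi(v,x)\,\lambda(dx)\Bigr)=\phi^v_t(r).
\end{align*}
The bound $\phi^v_t(r)\le \phi^*_t(r)$ is then immediate from the definition of $\phi^*_t(r)$ as the essential supremum of $\phi^v_t(r)$ over $v\in\BX$ (understood $\lambda$-a.e.).

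I do not anticipate any real obstacle: the only point that requires a moment of care is identifying the distribution of $V^v_1$, but this is the standard independent-thinning interpretation of the connection step and has already been used implicitly throughout the paper (see the construction of $\xi^v$ in Section~\ref{s:definitions} and Section~\ref{s:basics}).
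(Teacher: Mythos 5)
Your proposal is correct and follows essentially the same route as the paper: the paper's proof simply invokes Proposition \ref{p2.1} to identify $V^v_1$ as a Poisson process with intensity measure $K_{t\lambda}(0,\delta_v)=t\varphi(v,\cdot)\lambda$, which is exactly your thinning observation, and then the void-probability computation you spell out is what the paper leaves implicit. Your caveat that the bound by $\phi^*_t(r)$ holds for $\lambda$-a.e.\ $v$ is the right reading of the essential supremum.
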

\begin{proof}
    The claim follows from Proposition \ref{p2.1} since $V^v_1$ is a Poisson process with intensity measure $K_{t\lambda}(0,\delta_v)$.  
\end{proof}

\begin{proposition}\label{p:edge_moments}
    Let $v\in\BX$, $n\in\N$ and $r\ge 0$. Then  
    \begin{align*}
        \BP_t(E^v_{\le n}>r)\le c^v_{\le n-1}(t) \phi^*_t(r), \quad  \BP_t(E^v>r)\le c^v(t) \phi^*_t(r).
    \end{align*}
\end{proposition}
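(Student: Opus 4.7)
The plan is to decompose the event by generation and leverage the spatial Markov property (Proposition~\ref{p2.1}). Since $\{E^v_{\le n}>r\}=\bigcup_{k=1}^{n}\{E^v_k>r\}$, a union bound reduces the first inequality to proving $\BP_t(E^v_k>r)\le c^v_{k-1}(t)\,\phi^*_t(r)$ for each $k\in\{1,\ldots,n\}$, because then
\[
\BP_t(E^v_{\le n}>r)\le\sum_{k=1}^{n} c^v_{k-1}(t)\,\phi^*_t(r)=c^v_{\le n-1}(t)\,\phi^*_t(r).
\]

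For the per-generation bound I would condition on $C^v_{\le k-1}$ and union-bound over $x\in V^v_{k-1}$. For fixed such $x$, the set of neighbors of $x$ in $V^v_k$ coincides with
\[
\{z\in\eta\setminus V^v_{\le k-1}:\,z\sim x,\ z\not\sim V^v_{\le k-2}\},
\]
since any $z$ outside $V^v_{\le k-1}$ connected to $x\in V^v_{k-1}$ automatically has graph distance exactly $k$ from $v$, and conversely any neighbor of $x$ in $V^v_k$ cannot be connected to $V^v_{\le k-2}$ (otherwise it would lie in an earlier generation). Invoking the spatial Markov property in the form that underpins Proposition~\ref{p2.1}, conditionally on $C^v_{\le k-1}$ the set $\eta\setminus V^v_{\le k-1}$ is a Poisson process on $\BX\setminus V^v_{\le k-1}$ with intensity $t\,\bar\varphi(V^v_{\le k-2},z)\lambda(dz)$, equipped with independent Bernoulli$(\varphi(x,z))$ connection variables to $x$. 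Independent thinning then realises the set of neighbors of $x$ in $V^v_k$ as a Poisson process with intensity dominated by $t\varphi(x,z)\lambda(dz)$, so that
\[
\BP_t\bigl(\exists\,z\in V^v_k:\,z\sim x,\,d(x,z)>r\mid C^v_{\le k-1}\bigr)
\le 1-\exp\Bigl(-t\int_{B_r(x)^c}\varphi(x,z)\,\lambda(dz)\Bigr)
=\phi^x_t(r)\le\phi^*_t(r).
\]
Summing over $x\in V^v_{k-1}$ and taking expectations gives $\BP_t(E^v_k>r)\le c^v_{k-1}(t)\,\phi^*_t(r)$, completing the first inequality.

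The second inequality follows by monotone convergence: $E^v_{\le n}\uparrow E^v$ gives $\{E^v_{\le n}>r\}\uparrow\{E^v>r\}$, while $c^v_{\le n-1}(t)\uparrow c^v(t)$.

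I expect the only delicate point to be the spatial Markov step. Proposition~\ref{p2.1} as stated provides only the conditional intensity of the point set $V^v_k$, not the joint independence of the unexplored Poisson configuration with its connection variables to $V^v_{k-1}$ that I use to identify the law of the neighbors of $x$. This refinement is, however, implicit in the exploration-based derivation of Proposition~\ref{p2.1}, and could be packaged as a short auxiliary lemma if one wishes to keep the argument fully self-contained. The remaining steps are routine bookkeeping.
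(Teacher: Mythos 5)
Your proof is correct and follows essentially the same route as the paper: a union bound over generations and over the points of $V^v_{k-1}$, then the conditional Poisson computation (the paper writes the conditional probability as $1-\exp(-t\int_{B^c_r(x)}\varphi(x,y)\bar\varphi(V^v_{\le k-2},y)\,\lambda(dy))\le\phi^x_t(r)\le\phi^*_t(r)$, which is exactly your thinning bound), and finally monotone convergence for $E^v_{\le n}\uparrow E^v$. The refinement of the spatial Markov property you flag is used in the paper at the same level of detail (it invokes Proposition \ref{p2.1}, with the more refined statement available from \cite{ChLast25}), so this is not a gap relative to the paper's own argument.
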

\begin{proof}
    By subadditivity of a probability measure and Proposition \ref{p2.1}, we have
    \begin{align*}
        \BP_t(E^v_{\le n}>r)=\BP_t\left(\bigcup_{k=1}^n\{E^v_{k}>r\}\right)
&\le \sum_{k=1}^n \BP_t(E^v_{k}>r)
       =\sum_{k=1}^n \BP_t\left(\bigcup_{x\in V^v_{k-1}}\{\max_{y\in V_k^v : x \sim y} d(x,y) >r \} \right) \\  
&\le \sum_{k=1}^n \BE_t \int \BP_t\left( \max_{y\in V_k^v : x \sim y} d(x,y) >r \mid C^v_{\le k-1}\right) \, V^v_{k-1}(d x) \\ 
&=\sum_{k=1}^n \BE_t \int 1-\exp\left( - t \int_{B^c_r(x)} \varphi(x,y) \bar\varphi(V^v_{\le k-2},y) \, \lambda(d y)\right) \, V^v_{k-1}(d x) \\ 
&\le \sum_{k=1}^n \BE_t \int \phi^x_t(r) \, V^v_{k-1}(d x) \le c^v_{\le n-1}(t) \phi^*_t(r).
    \end{align*}
The second inequality follows immediately from the first one, since $E^v_{\le n}\uparrow E^v$ a.s. as $n\to\infty$.
\end{proof}

Denote by $\tau^v:=\min( n\in\N : V^v_n=\emptyset)$ the depth of $C^v$.

\begin{lemma}\label{l:diam_edge_bound} 
Let $v\in\BX$ and $n\in\N$. Then with probability one 
    \begin{align*}
        d_m(v,V^v_{\le n})\le n E^v_{\le n}, \quad   d_m(v,V^v)\le \tau^v E^v.
    \end{align*}
\end{lemma}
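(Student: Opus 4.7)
The plan is to use the definition of the generations $V^v_k$ as graph distance level sets together with the triangle inequality, separately for each of the two bounds.

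For the first bound, fix $x\in V^v_{\le n}$, and let $k\le n$ be the unique index with $x\in V^v_k$. By definition of $V^v_k$, the graph distance from $v$ to $x$ in $C^v$ is exactly $k$, so there exists a geodesic path $v=x_0\sim x_1\sim \cdots\sim x_k=x$ in $C^v$ with $x_i\in V^v_i$ for every $i=0,\ldots,k$ (the intermediate vertex $x_i$ must lie in $V^v_i$, otherwise the prefix $x_0\sim\cdots\sim x_i$ could be shortened, contradicting that the full path is a geodesic). Each consecutive pair $(x_{i-1},x_i)$ is an edge with endpoints in $V^v_{i-1}$ and $V^v_i$, so by the definition of $E^v_i$ we have $d(x_{i-1},x_i)\le E^v_i\le E^v_{\le n}$. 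Applying the triangle inequality along the path gives
\begin{align*}
d(v,x)\le \sum_{i=1}^k d(x_{i-1},x_i)\le k\, E^v_{\le n}\le n\, E^v_{\le n}.
\end{align*}
Taking the supremum over $x\in V^v_{\le n}$ yields $d_m(v,V^v_{\le n})\le n\, E^v_{\le n}$.

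For the second bound, we distinguish two cases. If $\tau^v<\infty$, then $V^v_k=\emptyset$ for all $k\ge \tau^v$, hence $V^v=V^v_{\le \tau^v-1}$, and $E^v_{\le \tau^v-1}\le E^v$. Applying the first bound with $n=\tau^v-1$ gives
\begin{align*}
d_m(v,V^v)=d_m(v,V^v_{\le \tau^v-1})\le (\tau^v-1)\, E^v_{\le \tau^v-1}\le \tau^v\, E^v.
\end{align*}
If $\tau^v=\infty$, then the right-hand side is $\infty\cdot E^v$, interpreted as $0$ when $E^v=0$ (in which case every edge of $C^v$ has length $0$ and $d_m(v,V^v)=0$) and as $+\infty$ otherwise, so the bound holds trivially.

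There is no real obstacle here: once one observes that geodesic paths in $C^v$ visit the generations $V^v_0,V^v_1,\ldots$ in order, the result is an immediate application of the triangle inequality. The only minor care is the $\tau^v=\infty$ case, which is handled by the usual convention on $\infty\cdot 0$.
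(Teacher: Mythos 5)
Your proof is correct and takes essentially the same route as the paper's: you bound $d(v,x)$ for $x\in V^v_k$ by the metric length of a geodesic whose consecutive vertices lie in successive generations, each edge being at most $E^v_i\le E^v_{\le n}$, exactly as the paper does via $d^{C^v_{\le k}}(v,x)\le E^v_1+\dots+E^v_k$. For the second bound the paper simply invokes monotone convergence in $n$, while you argue by the case distinction $\tau^v<\infty$ versus $\tau^v=\infty$; this is only a cosmetic difference.
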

\begin{proof}
    Denote for $\mu\in\bG$ and $v_1,v_2\in\mu$ the minimal length of the paths between $v_1$ and $v_2$ within the graph $\mu$ by $d^\mu(v_1,v_2)$. Note that for any $k\in\N$ by the triangle inequality, we have 
    \begin{align*}
        d_m(v,V^v_{k})= \max_{x\in V^v_k} d(v,x)\le \max_{x\in V^v_k} d^{C^v_{\le k}}(v,x)\le E_1^v+\dots+E_k^v.
    \end{align*}
    Therefore
    \begin{align*}
        d_m(v,V^v_{\le n})=\max(d_m(v,V^v_{1}),d_m(v,V^v_{2}), \dots , d_m(v,V^v_{n}))\le E_1^v+\dots+E_n^v\le n E^v_{\le n}.
    \end{align*}
    The second inequality follows from the first one by monotone convergence. 
\end{proof}

\begin{proposition}\label{p:diameter_moments}
    Let $v\in\BX$, $n\in\N$ and $t, r\ge 0$. 
    Then
    \begin{align*}
        \BP_t(d_m(v,V^v_{\le n})>r)\le c^v_{\le n-1}(t) \phi^*_t(r/n),
        \quad \BP_t(d_m(v,V^v)>r)\le c^v_{\le n-1}(t)\phi^*_t(r/n) + \BP_t(\tau^v>n).
    \end{align*}
\end{proposition}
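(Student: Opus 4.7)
The plan is to derive both inequalities from Lemma \ref{l:diam_edge_bound} combined with the edge-length tail bound in Proposition \ref{p:edge_moments}, with the second inequality obtained by conditioning on whether the cluster is contained within the first $n$ generations.

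For the first inequality, I would apply Lemma \ref{l:diam_edge_bound} to get the a.s.\ deterministic comparison $d_m(v,V^v_{\le n})\le n E^v_{\le n}$, whence
\begin{align*}
\{d_m(v,V^v_{\le n})>r\}\subseteq \{E^v_{\le n}>r/n\}.
\end{align*}
Taking $\BP_t$-probabilities and invoking the first bound of Proposition \ref{p:edge_moments} with $r$ replaced by $r/n$ yields $\BP_t(d_m(v,V^v_{\le n})>r)\le c^v_{\le n-1}(t)\phi^*_t(r/n)$.

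For the second inequality, I would split according to whether the cluster has exhausted itself by generation $n$. Recall that $\tau^v=\min(k\in\N:V^v_k=\emptyset)$; since each vertex of $V^v_k$ must be adjacent to one in $V^v_{k-1}$, the event $\{\tau^v\le n\}$ implies $V^v_k=\emptyset$ for all $k\ge n$, hence $V^v=V^v_{\le n-1}=V^v_{\le n}$. Consequently
\begin{align*}
\BP_t(d_m(v,V^v)>r)\le \BP_t\big(d_m(v,V^v)>r,\tau^v\le n\big)+\BP_t(\tau^v>n)\le \BP_t(d_m(v,V^v_{\le n})>r)+\BP_t(\tau^v>n),
\end{align*}
and the first term is bounded by $c^v_{\le n-1}(t)\phi^*_t(r/n)$ by the part already proved.

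No step here is genuinely hard; the only point requiring care is the observation that $\{\tau^v\le n\}$ forces $V^v=V^v_{\le n}$, which legitimises reducing the second inequality to the first one on that event. Everything else is a direct combination of already established ingredients.
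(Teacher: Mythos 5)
Your proposal is correct and follows essentially the same route as the paper: the first bound via Lemma \ref{l:diam_edge_bound} together with Proposition \ref{p:edge_moments} applied at $r/n$, and the second via the inclusion $\{d_m(v,V^v)>r\}\subset\{d_m(v,V^v_{\le n})>r\}\cup\{\tau^v>n\}$, which the paper states directly and you justify by noting that $\tau^v\le n$ forces $V^v=V^v_{\le n}$.
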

\begin{proof}
By Lemma \ref{l:diam_edge_bound} and Proposition \ref{p:edge_moments} we have
    \begin{align*}
         \BP_t(d_m(v,V^v_{\le n})>r)\le  \BP_t(n E^v_{\le n}>r)\le c^v_{\le n-1}(t)\phi^*_t(r/n).
    \end{align*}
   The second inequality follows from the first one and the following inclusion
   \begin{align*}
       \{d_m(v,V^v)>r\}\subset \{d_m(v,V^v_{\le n})>r\} \cup \{ \tau^v > n \}.
   \end{align*}
\end{proof}

\begin{corollary}\label{c:diameter_moments}
     Let $v\in\BX$, $n\in\N$ and $t, r\ge 0$. 
    Then
    \begin{align*}
        \BP_t(D(V^v_{\le n})>r)\le c^v_{\le n-1}(t)\phi^*_t(r/2n), 
        \quad \BP_t(D(V^v)>r)\le c^v_{\le n-1}(t)\phi^*_t(r/2n) + \BP_t(\tau^v>n).
    \end{align*}
\end{corollary}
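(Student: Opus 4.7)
The plan is to deduce this corollary almost immediately from Proposition \ref{p:diameter_moments} by replacing the quantity $d_m(v, V^v_{\le n})$ with the diameter $D(V^v_{\le n})$ via the triangle-inequality bound $D(A) \le 2\, d_m(v, A)$ for any $v \in A$, which is already recorded (and proved) at the end of Lemma \ref{l:subadditive_D}. Concretely, since $v \in V^v_{\le n}$, we have the deterministic inclusion
\begin{align*}
\{D(V^v_{\le n}) > r\} \subset \{2\, d_m(v, V^v_{\le n}) > r\} = \{d_m(v, V^v_{\le n}) > r/2\}.
\end{align*}

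For the first inequality, I would take probabilities on both sides of this inclusion and apply Proposition \ref{p:diameter_moments} with $r$ there replaced by $r/2$, obtaining the bound $c^v_{\le n-1}(t)\, \phi^*_t((r/2)/n) = c^v_{\le n-1}(t)\, \phi^*_t(r/(2n))$.

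For the second inequality, I would use the analogous inclusion $\{D(V^v) > r\} \subset \{d_m(v, V^v) > r/2\}$ together with the decomposition $\{d_m(v, V^v) > r/2\} \subset \{d_m(v, V^v_{\le n}) > r/2\} \cup \{\tau^v > n\}$ used in the proof of Proposition \ref{p:diameter_moments}; a union bound and the first part of that proposition (again with $r$ replaced by $r/2$) yield the stated estimate. There is essentially no obstacle here, since all the work has been done in Lemma \ref{l:subadditive_D}, Lemma \ref{l:diam_edge_bound}, and Proposition \ref{p:diameter_moments}; the corollary is just the translation from the one-sided diameter $d_m(v, \cdot)$ to the symmetric diameter $D(\cdot)$, which costs exactly the factor $2$ appearing in $\phi^*_t(r/2n)$.
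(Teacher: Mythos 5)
Your proposal is correct and follows exactly the paper's route: the paper also deduces the corollary immediately from Lemma \ref{l:subadditive_D} (via the bound $D(A)\le 2\,d_m(v,A)$ for $v\in A$) combined with Proposition \ref{p:diameter_moments} applied at radius $r/2$, which is precisely where the factor $2$ in $\phi^*_t(r/2n)$ comes from. Nothing is missing in your argument.
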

\begin{proof}
    The claims follow immediately from Lemma \ref{l:subadditive_D} and Proposition \ref{p:diameter_moments}.
\end{proof}

\begin{corollary}\label{c:diameter_ex}
    Let $n\in\N$ and $t\ge 0$. If $D^*_\varphi<\infty$ and $\phi^*_t(r)$ decay exponentially fast as $r\to\infty$, then
    there exists $\delta:=\delta(t,n)>0$ such that 
    \begin{align*}
     \esssup_{v\in\BX} \BE_t e^{\delta D(V^v_{\le n})}<\infty.
     \end{align*}
\end{corollary}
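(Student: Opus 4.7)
The plan is to convert the tail bound of Corollary \ref{c:diameter_moments} into an exponential moment bound via the standard identity $\BE_t e^{\delta X}=1+\delta\int_0^\infty e^{\delta r}\BP_t(X>r)\,dr$ for a nonnegative random variable $X$, applied with $X=D(V^v_{\le n})$. The hypothesis $D^*_\varphi<\infty$ enters only to guarantee, via Corollary \ref{cmoments}, that $c^*_{\le n-1}(t):=\esssup_{v\in\BX} c^v_{\le n-1}(t)$ is finite (a fortiori, since the exponential moments of $|V^v_{\le n-1}|$ are uniformly finite).

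First I would fix constants $C,\alpha>0$ with $\phi^*_t(r)\le C e^{-\alpha r}$ for all $r\ge 0$; enlarging $C$ if necessary, the trivial bound $\phi^*_t\le 1$ allows one to absorb any initial region. Substituting into the first inequality of Corollary \ref{c:diameter_moments} yields the $v$-uniform tail bound
\begin{align*}
\BP_t(D(V^v_{\le n})>r)\le \min\!\bigl(1,\, c^*_{\le n-1}(t)\,C\, e^{-\alpha r/(2n)}\bigr),\quad r\ge 0.
\end{align*}

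Next I would choose any $\delta\in(0,\alpha/(2n))$ and split the integral at the threshold $r_0:=\max\bigl(0,\,2n\alpha^{-1}\log(c^*_{\le n-1}(t)\,C)\bigr)$. On $[0,r_0]$ the trivial bound $\BP_t(\cdots)\le 1$ contributes $e^{\delta r_0}-1$, while on $[r_0,\infty)$ the exponential tail contributes a finite multiple of $e^{(\delta-\alpha/(2n))r_0}$; both are finite, and every constant depends only on $t$ and $n$, not on $v$. This proves the claim uniformly in $v\in\BX$.

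There is no genuine obstacle here: the argument is a routine conversion of exponential decay of the tail into an exponential moment. The only point requiring mild care is that $\delta$ must be chosen strictly below $\alpha/(2n)$, where the factor $2n$ enters through the two successive uses of the triangle inequality in Lemma \ref{l:subadditive_D} and Lemma \ref{l:diam_edge_bound} (diameter $\le 2 d_m$ and radius $\le nE^v_{\le n}$). A sharper exponent would require a refined diameter-to-edge bound that the present tail estimate does not provide.
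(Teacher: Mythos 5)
Your argument is correct and is essentially the paper's proof: the paper also deduces the corollary by combining the tail bound of Corollary \ref{c:diameter_moments} with the uniform finiteness of $c^*_{\le n-1}(t)$ supplied by Corollary \ref{cmoments}, the only difference being that you spell out the routine tail-to-exponential-moment integration and the choice $\delta<\alpha/(2n)$, which the paper leaves implicit.
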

\begin{proof}
    The claim follows from Corollaries \ref{c:diameter_moments} and \ref{cmoments}.
\end{proof}

\begin{remark}\rm
  Suppose that $\phi^*_t(r)$ and the tail distribution of depth $\BP_t(\tau^v>r)$ decay exponentially fast as $r\to\infty$ with the
  exponents $\delta_1>0$ and $\delta_2>0$ respectively. Then using
  Proposition \ref{p:diameter_moments} one can easily show that the tail  distribution of $d_m(v,V^v)$ decrease as  $\exp(-\sqrt{\delta_1 \delta_2 r})$ via $r$. In other words, using
  the previous Proposition, we can't prove the light tail property for
  the diameter of a cluster. We will use a similar construction as in Theorem \ref{t:subcrit_ex} to achieve the desired result; see
  Theorem \ref{t:diameter_ex}. For example, if $\phi^*_t(r)$ has a heavy-tail (e.g., decay with
  polynomial speed) and depth $\tau^v$ has a light-tailed distribution, then one can build a "better" upper bound for the tail distribution of $d_m(v,V^v)$ using Proposition \ref{p:diameter_moments}.     
\end{remark}

\begin{theorem}\label{t:diameter_ex}
Let $h\colon \BX\to\N$ be a measurable and bounded function and $t\ge 0$. 
Assume that $D^*_\varphi<\infty$ and $c^*_h(t):=\esssup\limits_{v\in\BX} c^v_{h(v)}(t)<1$.
Assume also that $\phi^*_t(r)$ decays exponentially fast as $r\to\infty$, 
then there exists $\delta\equiv\delta(t, h^*)>0$ such that 
\begin{align*}
\esssup_{v\in\BX} \BE_t e^{\delta D(\widetilde W^{v,h})}<\infty.
\end{align*}
\end{theorem}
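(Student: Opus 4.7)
The plan is to dominate $D(\widetilde W^{v,h})$ deterministically by a sum of local maximal edge lengths indexed by the vertices of $W^{v,h}$, and then extract the exponential moment by an exploration argument analogous to the proof of Theorem~\ref{t:subcrit_ex}. For $x \in W^{v,h}_k$ set $M_x := E^x_{\le h(x)}(\xi^k_x)$. Any $y \in \widetilde W^{v,h}_{k+1}$ lies in $V^{x_k!}_{\le h(x_k)}(\xi^k_{x_k})$ for some chain $v = x_0, x_1, \ldots, x_k$ in $W^{v,h}$ with $x_{j+1} \in V^{x_j}_{h(x_j)}(\xi^j_{x_j})$. Lemma~\ref{l:diam_edge_bound} yields $d(x_j, x_{j+1}) \le h^* M_{x_j}$ and $d(x_k, y) \le h^* M_{x_k}$, which combined with $D(\widetilde W^{v,h}) \le 2 d_m(v, \widetilde W^{v,h})$ from Lemma~\ref{l:subadditive_D} gives the pointwise bound
\begin{align*}
D(\widetilde W^{v,h}) \le 2 h^* \sum_{x \in W^{v,h}} M_x.
\end{align*}

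Next I would establish a uniform exponential tail for each $M_x$. Proposition~\ref{p:edge_moments} at $n = h^*$ gives $\BP_t(E^x_{\le h^*} > r) \le c^*_{\le h^* - 1}(t)\, \phi^*_t(r)$; combined with Corollary~\ref{cmoments} (which ensures $c^*_{\le h^* - 1}(t) < \infty$) and the exponential decay hypothesis on $\phi^*_t$, this supplies a function $\delta' \mapsto B(\delta')$ with $B(\delta') \to 1$ as $\delta' \to 0^+$ such that $\BE_t e^{\delta' E^x_{\le h(x)}} \le B(\delta')$ for every $x \in \BX$. By construction of the families $\{\xi^k_x\}$, the conditional law of $\xi^k_x$ given the past of an exploration of $W^{v,h}$ equals the unconditional law of $\xi^x$, so this bound survives conditioning on the exploration history.

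Finally, I would reveal the vertices $x_1, x_2, \ldots$ of $W^{v,h}$ one at a time via the exploration process $(Y_k, Z_k)$ from the proof of Theorem~\ref{t:subcrit_ex}, with the convention $M_{x_i} := 0$ for $i > \tau := |W^{v,h}|$. Setting $S_k := \sum_{i=1}^k M_{x_i}$, the tower property gives $\BE_t e^{\delta' S_k} \le B(\delta')^k$. Since $\tau \le |\widetilde W^{v,h}|$, Theorem~\ref{t:subcrit_ex} supplies $\delta_0 > 0$ and $C < \infty$ with $\BP_t(\tau = k) \le C e^{-\delta_0 k}$, so a Cauchy--Schwarz estimate mimicking the closing step of the proof of Theorem~\ref{t:subcrit_ex} yields
\begin{align*}
\BE_t e^{\delta D(\widetilde W^{v,h})} \le \BE_t e^{2 \delta h^* S_\tau} \le \sqrt{C}\, \sum_{k \ge 1} \bigl(B(4 \delta h^*)\, e^{-\delta_0}\bigr)^{k/2},
\end{align*}
which is finite uniformly in $v$ for any $\delta$ small enough that $B(4 \delta h^*) < e^{\delta_0}$.

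The hard part is the bookkeeping in the exploration: one must verify that at each step $i$, the local quantity $M_{x_i}$ is conditionally independent of the past with the required uniform exponential moment, even though $M_{x_i}$ and the branching decisions that determine the order of exploration are both functions of the same underlying random graph $\xi^{k(x_i)}_{x_i}$. This is the same conditional-independence check that underlies Theorem~\ref{t:subcrit_ex}; once in place, the rest is just the light-tail Cauchy--Schwarz sum.
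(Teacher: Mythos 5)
Your proposal is correct and follows essentially the same route as the paper: the same exploration process $(Y_k,Z_k)$ from Theorem \ref{t:subcrit_ex}, a per-step uniform exponential moment for the diameter increment, and the closing Cauchy--Schwarz sum against the uniform light tail of $\tau$. The only (harmless) deviation is that you control the increment by the path-length bound $D(\widetilde W^{v,h})\le 2h^*\sum_{x} M_x$ via Lemma \ref{l:diam_edge_bound} and Proposition \ref{p:edge_moments}, whereas the paper applies Lemma \ref{l:subadditive_D} directly to $D(Y_k)$ and invokes the uniform exponential moment of $D(V^v_{\le h^*})$ from Corollary \ref{c:diameter_ex}; the conditional-independence bookkeeping you flag is exactly the structure already used in Theorem \ref{t:subcrit_ex}, so it goes through.
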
 
\begin{proof}
To describe $W^{v,h}$ and $\widetilde W^{v,h}$ we will use the same 
exploration process (spatial Markov process) $(X_k)_{k\ge 0}=((Y_k,Z_k))_{k\ge 0}$ 
as in Theorem \ref{t:subcrit_ex}. Recall that $X_0=(\delta_v,\delta_v)$, $X_1=(V^{v}_{\le h(v)},V^{v}_{ h(v)})$ and
\begin{align*}
    Y_{k+1}=&  Y_k + \int \I\{d(v,x)=\min_{y\in Z_k} d(v,y)\} V^{x!}_{\le h(x)}(\xi_x^k) \, Z_k(d x), \\
    Z_{k+1}=& Z_k+\int \I\{d(v,x)=\min_{y\in Z_k} d(v,y)\} (V^x_{h(x)}(\xi_x^k)-\delta_x) \, Z_k(d x).
\end{align*}
Let $\tau:=\min\{k\ge 1 : Z_k=\emptyset\}$. We know that $\tau$ is a stopping time 
w.r.t.\ $\{Z_k\}_{k\in\N}$, $\tau=|W^{v,h}|$ and $Y_\tau=\widetilde W^{v,h}$ in distribution.

Let $\delta$ be a positive and sufficiently small parameter. By Lemma \ref{l:subadditive_D} we have for any $k\ge 1$
\begin{align*}
    \BE_t e^{\delta D(Y_k)}&\le\BE_t \left(e^{\delta D(Y_{k-1})} 
\BE_t \left(\exp\left(\delta \int \I\{d(v,x)=\min_{y\in Z_{k-1}} d(v,y)\} D(V^x_{\le h(x)}(\xi_x^{k-1}))\, Z_{k-1}(d x)\right)\Bigm| X_{k-1}\right)\right) \\
&=\BE_t \left(e^{\delta D(Y_{k-1})} \int \I\{d(v,x)
=\min_{y\in Z_{k-1}} d(v,y)\} \BE_t (e^{\delta D(V^x_{\le h(x)}(\xi^{k-1}_x))}\Bigm| Z_{k-1})\, Z_{k-1}(d x)\right) \\
&\le \BE_t e^{\delta D(Y_{k-1})} \esssup_{v\in\BX}\BE_t e^{ \delta  D(V^v_{\le  h^*})} 
\le \left(\esssup_{v\in\BX}\BE_t e^{ \delta  D(V^v_{\le  h^*})}\right)^k.
\end{align*}
By Theorem \ref{t:subcrit} $|\widetilde W^{v,h}|$ is a proper random variable. 
Then, by Fubini's theorem, Cauchy–Schwarz inequality, and the previous inequality, we also have 
\begin{align*}
   \BE_t e^{ \delta D\left( \widetilde W^{v,h}\right) }
&=\BE_t \left( e^{ \delta D\left( \widetilde W^{v,h}\right) }\sum_{k=1}^\infty \I\{\tau=k\}\right)=\sum_{k=1}^\infty \BE_t (e^{\delta D(Y_k)} \I\{\tau=k\})\\
&\le \sum_{k=1}^\infty \sqrt{\BE_t e^{2\delta D(Y_k)} \BP_t(\tau=k)}
    \le \sum_{k=1}^\infty \sqrt{\big(\esssup_{v\in\BX}\BE_t e^{ 2 \delta  D(V^v_{\le  h^*})}\big)^k \BP_t(\tau=k)}.
\end{align*}
Note that $D(V^v_{\le  h^*})$ and $\tau$ have light tail
distributions uniformly in $v\in\BX$ by Corollary \ref{c:diameter_ex}
and Theorem \ref{t:subcrit_ex} respectively. Therefore, we get the
required result, since
$\lim\limits_{\delta\to 0}\esssup_{v\in\BX} \BE_t e^{2\delta
  D(V^v_{\le h^*})}=1$.
\end{proof}

\begin{corollary}\label{c:subcrit_diam_ex}
Suppose that $\phi^*_t(r)$ decays 
exponentially as $r\to\infty$. Then, under the conditions of Corollary \ref{c:subcrit}, there exists $\delta=\delta(t,n)>0$ such that
    \begin{align*}
       \esssup_{v\in\BX} \BE_t e^{\delta D(V^{v})}<\infty.
    \end{align*}
\end{corollary}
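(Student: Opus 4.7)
The plan is to deduce Corollary \ref{c:subcrit_diam_ex} directly from Theorem \ref{t:diameter_ex} by combining it with the stochastic domination provided by Corollary \ref{p2.8}, exactly mirroring how Corollary \ref{c:subcrit_ex} is obtained from Theorem \ref{t:subcrit_ex}.

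First I would verify that, under the hypotheses of Corollary \ref{c:subcrit} (that is, $D^*_\varphi < \infty$ and $c^*_n(t) < 1$) together with the additional exponential decay of $\phi^*_t(r)$, the constant function $h \equiv n$ fulfils every requirement of Theorem \ref{t:diameter_ex}. Indeed $h$ is trivially measurable and bounded with $h^* = n$, and $c^*_h(t) = c^*_n(t) < 1$. Applying that theorem yields a $\delta_0 \equiv \delta_0(t,n) > 0$ such that
\begin{align*}
\esssup_{v\in\BX} \BE_t e^{\delta_0 D(\widetilde W^{v,n})} < \infty.
\end{align*}

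Next I would transfer this exponential moment bound from $\widetilde W^{v,n}$ to $V^v$ via stochastic domination. By Corollary \ref{p2.8} we have $V^v \le_{st} \widetilde W^{v,n}$. The key observation is that the diameter $D$ is a monotone increasing functional on $\bN$ with respect to the support ordering: if $\nu \le \nu'$ as counting measures, then $V(\nu) \subseteq V(\nu')$ and taking suprema over a larger set can only increase, so $D(\nu) \le D(\nu')$. Consequently the map $\mu \mapsto e^{\delta_0 D(\mu)}$ is increasing, and stochastic domination gives
\begin{align*}
\BE_t e^{\delta_0 D(V^v)} \le \BE_t e^{\delta_0 D(\widetilde W^{v,n})}.
\end{align*}
Taking the essential supremum over $v \in \BX$ on both sides yields the assertion with $\delta := \delta_0$.

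There is no real obstacle here: the argument is a straightforward combination of two earlier results, and the only point requiring any care is the monotonicity of $D$, which is immediate from its definition as a supremum of pairwise distances. The conceptually substantial work (building the exploration process, extracting exponential tails from $\phi^*_t$ and from the extinction time $\tau$) has already been done in Theorem \ref{t:diameter_ex}.
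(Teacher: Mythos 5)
Your proposal is correct and follows exactly the paper's route: the paper likewise derives the corollary from Corollary \ref{p2.8} together with Theorem \ref{t:diameter_ex} applied with $h\equiv n$. Your explicit verification that the diameter $D$ is increasing under the support ordering (so that stochastic domination transfers the exponential moment) is a detail the paper leaves implicit, but it is the same argument.
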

\begin{proof}
The assertion follows from Corollary \ref{p2.8} and Theorem \ref{t:diameter_ex} with $h\equiv n$. 
\end{proof}

\begin{theorem}\label{t_diametr_moments_2}
Let $t<t_T$. If $\phi^*_t(r)$ decays exponentially as $r\to\infty$, 
then the diameter of the cluster of an arbitrary vertex has a light-tailed distribution (as in Corollary \ref{c:subcrit_diam_ex}).
\end{theorem}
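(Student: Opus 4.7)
The plan is to mimic the proof of Theorem \ref{tmoments_2}, with the cluster-size estimate (Theorem \ref{t:subcrit_ex}) replaced by the diameter estimate (Theorem \ref{t:diameter_ex}). First, since $t<t_T$ we have $c^*(t)<\infty$ and, by Lemma \ref{l:not_triv}, $D^*_\varphi<\infty$. I would then recycle the construction of the height function $h$ from the proof of Theorem \ref{tmoments_2}: set $N:=\lceil 2c^*(t)\rceil$, partition $\BX$ as $A_i:=\{v:c^v_i(t)\le 1/2\}\setminus\bigcup_{j<i}A_j$ for $i=1,\ldots,N$, let $n$ be the smallest index with $\lambda\big(\bigcap_{i\le n}A_i^c\big)=0$, and define $h(v):=i$ on $A_i$, $h(v):=1$ elsewhere. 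By construction $h$ is measurable, bounded by $n$, and $c^v_{h(v)}(t)\le 1/2$ for $\lambda$-a.e.\ $v$, so $c^*_h(t)\le 1/2<1$.

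Next I would dominate the cluster of $v$ by the spatial branching process $\widetilde W^{v,h}$ via Proposition \ref{p2.9}, which yields $V^v\le_{st}\widetilde W^{v,h}$ for $\lambda$-a.e.\ $v$. Because the elements of $\bN$ are simple counting measures and $\le$ on $\bN$ coincides with inclusion of supports, this stochastic ordering admits a coupling in which the vertex set of $V^v$ is contained in that of $\widetilde W^{v,h}$. Since the diameter $D(\cdot)$ is monotone under set inclusion, this passes to the stochastic inequality $D(V^v)\le_{st}D\big(\widetilde W^{v,h}\big)$ for $\lambda$-a.e.\ $v$.

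Finally, the hypotheses of Theorem \ref{t:diameter_ex} are now in place: $h$ is bounded and measurable, $D^*_\varphi<\infty$, $c^*_h(t)<1$, and $\phi^*_t(r)$ decays exponentially. Theorem \ref{t:diameter_ex} therefore furnishes some $\delta\equiv\delta(t,n)>0$ such that
\[
\esssup_{v\in\BX}\BE_t e^{\delta D(\widetilde W^{v,h})}<\infty,
\]
which, combined with the domination of the previous paragraph, gives $\esssup_{v\in\BX}\BE_t e^{\delta D(V^v)}<\infty$, i.e.\ the light-tail property asserted in the theorem.

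The proof is essentially an assembly of tools already built, so I do not anticipate a serious obstacle. The only point that requires a moment's care is the passage from stochastic domination of point processes to stochastic domination of their diameters; as noted, it is immediate from the simplicity of the measures in $\bN$ and the monotonicity of $D$ under inclusion, and the same step is used implicitly in the proof of Corollary \ref{c:subcrit_diam_ex}. Everything else is a direct analogue of the argument in Theorem \ref{tmoments_2}, trading the exponential moment of $|\widetilde W^{v,h}|$ for the exponential moment of $D(\widetilde W^{v,h})$.
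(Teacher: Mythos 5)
Your proposal is correct and follows exactly the paper's route: the paper's proof likewise recycles the construction of the bounded function $h$ from Theorem \ref{tmoments_2} (so that $c^*_h(t)<1$), dominates $V^v$ by $\widetilde W^{v,h}$ via Proposition \ref{p2.9}, and concludes with Theorem \ref{t:diameter_ex}. Your explicit remark on passing from $V^v\le_{st}\widetilde W^{v,h}$ to the diameters via monotonicity of $D$ under inclusion is the same step the paper uses implicitly (as in Corollary \ref{c:subcrit_diam_ex}), so there is no gap.
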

\begin{proof}
We can build the same upper bound for the cluster of an arbitrary vertex as in Theorem \ref{tmoments_2} and then 
derive the required result with Theorem \ref{t:diameter_ex}.
\end{proof}

\section{The stationary marked RCM}\label{s:stationaryRCM}

In this section, we consider the stationary RCM as introduced in Section \ref{s:perc}.
Let $n\in\N$ and define the measurable functions
$d_\varphi^{(n)}\colon \BM^2\to[0,\infty]$ and
$d_\varphi^{[n]}\colon \BM^2\to[0,\infty]$ by
\begin{align}\notag
d_\varphi^{(n)}(p,q):=&\iint \prod_{i=0}^{n-1} \varphi((x_i,p_i),(x_{i+1},p_{i+1})) \,d \boldsymbol{x}_n \, \BQ^{n-1}(d \boldsymbol{p}_{n-1}), \\
\label{e:653}
d_\varphi^{[n]}(p,q):=&\iint \prod_{i=0}^{n-1} \varphi((x_i,p_i),(x_{i+1},p_{i+1})) 
\prod_{3\le i+2\le j\le n} \bar\varphi ((x_i,p_i), (x_{j},p_j))
\,d \boldsymbol{x}_n \, \BQ^{n-1}(d \boldsymbol{p}_{n-1}).
\end{align}
where $p_0:=p$ and $p_n:=q$. In the special case $n=1$, we write
\begin{align*}
d_\varphi(p,q):=d_\varphi^{(1)}(p,q)=\int \varphi(x,p,q)\,dx.
\end{align*}
Note that
\begin{align*}
    d_\varphi^{(n)}(p,q):=\int \prod_{i=0}^{n-1} d_\varphi(p_i,p_{i+1}) \,\BQ^{n-1}(d \boldsymbol{p}_{n-1}).
\end{align*}
It follows from the Mecke equation  \eqref{e:Meckev} that $t^n d_\varphi^{(n)}(p,\cdot)$ is the $\BQ$-density
of the expected number of paths of length $n$ starting in $(0,p)$ and ending in a point with mark in a given
set from $\cB(\BM)$. Analogously, $t^n d_\varphi^{[n]}(p,\cdot)$
is the $\BQ$-density of the expected number of {\it self-avoiding walks} 
(paths without loops) of length $n$ starting in $(0,p)$ and ending in a point with a mark 
in a set from $\cB(\BM)$.
From the symmetry property of $\varphi$ we obtain 
that $d_\varphi^{(n)}$ and $d_\varphi^{[n]}$ are symmetric. From stationarity, one can also notice that for given 
$n\ge 1$ and $p,q\in\BM$ we have 
\begin{align}\label{d_[n]_submult}
    d_\varphi^{[2n]}(p,q)\le \int d_\varphi^{[n]}(p,r) d_\varphi^{[n]}(r,q)\, \BQ(dr).
\end{align}
For a given measurable $L\colon \BM^2\to\R\cup\left\{\pm\infty\right\}$ and $r_1,r_2\in\left[1,\infty\right)$, we define the norms
\begin{align}
    \|L\|_{r_1,r_2} :=& \left(\int \left(\int |L(p,q)|^{r_1}\, \BQ(d p)\right)^\frac{r_2}{r_1}\, \BQ(d q)\right)^\frac{1}{r_2},\\
    \|L\|_{\infty,r_2} :=& \esssup_{p\in\BM}\left(\int |L(p,q)|^{r_2}\, \BQ(d q)\right)^\frac{1}{r_2},\\
    \|L\|_{\infty,\infty} :=& \esssup_{p,q\in\BM} |L(p,q)|.
\end{align}

Note that, by definition, we have $D^*_\varphi=\|d_\varphi\|_{\infty,1}$.
Take $p\in\BM$, $n\in\N$ and $t\ge 0$. By the multivariate Mecke equation, we have that
\begin{align}\label{e:vpq}
\BE_tV^{(0,p)}_n(\R^d\times\cdot)=\int \I\{q\in\cdot\}v^{p,q}_n(t)\,\BQ(dq),
\end{align}
where the density $v^{p,q}_n(t)$ can be written as a linear combination of
integrals similar to those occurring in  \eqref{e:653}.
Since we can bound $V^{(0,p)}_n(\R^d\times B)$ by counting all
paths of length $n$ without loops and ending in a measurable $B\subset\BM$, 
we have
\begin{align}\label{cnbound1}
 v^{p,q}_n(t)\le t^n d_\varphi^{[n]}(p,q) \le t^n d_\varphi^{(n)}(p,q),\quad \BQ\text{-a.e.\ $q\in\BM$}
\end{align}
and therefore
\begin{align}\label{cnbound2}
 v^p_n(t):=v^{(0,p)}_n(t)\le t^n\int d_\varphi^{[n]}(p,q)\,\BQ(dq)
\le t^n\int d_\varphi^{(n)}(p,q)\,\BQ(dq).
\end{align}
We define 
\begin{align}\label{e:Delta_n}
    \Delta_n(t):=c^*_{\le n-1}(t) +\|v_n(t)\|_{\infty,\infty},
\end{align}
where we recall that the first term is the $\|\cdot\|_\infty$-norm of $p\mapsto  c^{(0,p)}_{\le n-1}(t)$
and the second term is defined as the $\|\cdot\|_{\infty,\infty}$-norm of the function
$(p,q)\mapsto  v^{p,q}_n(t)$.

\begin{lemma}\label{l:Delta_n}
Let $t\ge 0$ and $n\in\N$. If $\|d_\varphi\|_{\infty,1}<\infty$ and $\|d_\varphi^{[n]}\|_{\infty,\infty}<\infty$, 
then $\Delta_{n}(t)<\infty$. Moreover, if  $\|d_\varphi\|_{\infty,1}<\infty$ and $\|d_\varphi^{[n]}\|_{\infty,2}<\infty$, then $\Delta_{2n}(t)<\infty$.
\end{lemma}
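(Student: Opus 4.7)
The plan is to decompose $\Delta_n(t)$ according to its definition in \eqref{e:Delta_n} and bound each summand separately, using the ingredients already assembled in Sections \ref{secsub} and \ref{s:stationaryRCM}.

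First I would handle the mean-cluster term $c^*_{\le n-1}(t)$, which by linearity is $\sum_{k=0}^{n-1} c^*_k(t)$. Combining Proposition \ref{p2.5} (namely $V^v_k \le_{st} W^{v,1}_k$) with Proposition \ref{p:GW_c_1} yields $c^v_k(t) \le (c^*_1(t))^k$ for every $v\in\BX$, and in the stationary marked setting one has $c^*_1(t) = t\|d_\varphi\|_{\infty,1} = tD^*_\varphi$. The hypothesis $\|d_\varphi\|_{\infty,1}<\infty$ therefore gives $c^*_{\le m}(t)<\infty$ for every $m$, so this term is finite in both parts of the lemma without any further work.

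Next I would bound the density term $\|v_n(t)\|_{\infty,\infty}$ using the self-avoiding-walk estimate \eqref{cnbound1}, which gives
\begin{equation*}
\|v_n(t)\|_{\infty,\infty} \le t^n \|d_\varphi^{[n]}\|_{\infty,\infty}.
\end{equation*}
Under the first set of hypotheses the right-hand side is finite by assumption, which establishes $\Delta_n(t)<\infty$.

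For the second assertion I would use the same overall scheme but substitute a submultiplicativity argument to upgrade the $L^2$-type hypothesis on $d_\varphi^{[n]}$ to the $L^\infty$-type hypothesis on $d_\varphi^{[2n]}$. Starting from \eqref{d_[n]_submult} and the Cauchy--Schwarz inequality applied to the $\BQ$-integral in $r$, together with the symmetry of $d_\varphi^{[n]}$, I expect to obtain
\begin{equation*}
d_\varphi^{[2n]}(p,q) \le \Bigl(\int d_\varphi^{[n]}(p,r)^2\,\BQ(dr)\Bigr)^{1/2}\Bigl(\int d_\varphi^{[n]}(q,r)^2\,\BQ(dr)\Bigr)^{1/2} \le \|d_\varphi^{[n]}\|_{\infty,2}^{\,2}
\end{equation*}
for $\BQ^2$-a.e.\ $(p,q)$, hence $\|d_\varphi^{[2n]}\|_{\infty,\infty}\le \|d_\varphi^{[n]}\|_{\infty,2}^{\,2}<\infty$. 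Feeding this back into the estimate of the previous paragraph (applied with $2n$ in place of $n$) concludes the proof.

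There is no real obstacle here; the only place where one has to be a little careful is handling the essential-supremum nature of the norms when applying Cauchy--Schwarz (the inequalities hold only $\BQ$- or $\BQ^2$-a.e., but that is exactly what the $\|\cdot\|_{\infty,\infty}$ norm requires), and remembering to use the symmetry of $d_\varphi^{[n]}$ to treat the second factor.
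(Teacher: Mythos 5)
Your proposal is correct and follows essentially the same route as the paper: a geometric bound $c^*_k(t)\le (t\|d_\varphi\|_{\infty,1})^k$ for the mean-cluster term, the path-counting estimate \eqref{cnbound1} for $\|v_n(t)\|_{\infty,\infty}$, and the submultiplicativity \eqref{d_[n]_submult} combined with Cauchy--Schwarz and symmetry to get $\|d_\varphi^{[2n]}\|_{\infty,\infty}\le\|d_\varphi^{[n]}\|_{\infty,2}^2$. The only cosmetic difference is that you obtain the geometric bound via the branching-process domination of Propositions \ref{p2.5} and \ref{p:GW_c_1} (as the paper itself does in Lemma \ref{l:not_triv}), whereas the paper's proof of this lemma uses $\|d_\varphi^{(k)}\|_{\infty,1}\le\|d_\varphi\|_{\infty,1}^k$ together with \eqref{cnbound2}; both yield the same estimate.
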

\begin{proof}
By the recursive structure of $d^{(n)}$ we have $\|d_\varphi^{(n)}\|_{\infty,1}\le \|d_\varphi\|_{\infty,1}^n$. 
Therefore, we obtain from \eqref{cnbound2} that
\begin{align*}
c^*_{\le 2n-1}(t)\le \sum_{k=0}^{2n-1} \|v_k(t)\|_{\infty}
\le 1+ \sum_{k=1}^{2n-1} t^k\|d_\varphi^{(k)}\|_{\infty,1}\le \frac{(t\|d_\varphi\|_{\infty,1})^{2n}-1}{t\|d_\varphi\|_{\infty,1}-1},
 \end{align*}
where the final upper bound has to be interpreted as $2n$ if $t\|d_\varphi\|_{\infty,1}=1$. 
On the other hand, we obtain from the inequalities \eqref{cnbound1}, \eqref{d_[n]_submult} 
and the Cauchy--Schwarz inequality, 
\begin{align*}
\|v_{2n}(t)\|_{\infty,\infty} \le t^{2n} \|d_\varphi^{[2n]}\|_{\infty,\infty}
\le t^{2n}\esssup_{p,q\in\BM} \int d_\varphi^{[n]} (p,r) d_\varphi^{[n]} (r,q) \, \BQ(d r) \le t^{2n} \| d_\varphi^{[n]}\|_{\infty,2}^2.
\end{align*}
Hence
\begin{align}\label{e:Delta_2n}
\Delta_{2n}(t)\le \frac{(t\|d_\varphi\|_{\infty,1})^{2n}-1}{t\|d_\varphi\|_{\infty,1}-1}+  t^{2n} \| d_\varphi^{[2n]}\|_{\infty,\infty}
\le \frac{(t\|d_\varphi\|_{\infty,1})^{2n}-1}{t\|d_\varphi\|_{\infty,1}-1}+  t^{2n} \| d_\varphi^{[n]}\|_{\infty,2}^2.
\end{align} 
\end{proof}

Finally, we are ready to state our main result on the strong sharpness of the phase transition, which is a significant generalization of the main result from \cite{Ziesche18} and some of the results from \cite{CaicedoDickson24}. The main condition under which we can prove the strong sharpness is that there exists $n\in\N$ satisfying
\begin{align}\label{c:main}
    \|d_\varphi\|_{\infty,1}+ \|d_\varphi^{[n]}\|_{\infty,\infty}<\infty.
\end{align}

\begin{theorem}\label{t:main} Assume that $\|d_\varphi\|_{1,1}>0$.  We have the following:
\begin{enumerate}
\item[{\rm (i)}] $t_T\ge \|d_\varphi\|_{\infty,1}^{-1}$.
        \item[{\rm (ii)}] For $t<t_T$ there exists $\delta_1:=\delta_1(t)$ 
such that $\esssup\limits_{p\in\BM}\BE_t e^{\delta_1 |V^{(0,p)}|}<\infty$.
        \item[{\rm (iii)}]  If 
        \begin{align*}
          \esssup\limits_{p\in\BM} \int_{\|x\|>u} \varphi(x,p,q) \, d x \, \BQ(d q)
        \end{align*}
        decays exponentially fast as $u\to\infty$, then for $t<t_T$ there exists $\delta_2:=\delta_2(t)$ such that 
        \begin{align*}
            \esssup\limits_{p\in\BM}\BE_t e^{\delta_2 D(V^{(0,p)})}<\infty.
        \end{align*}
       \item [{\rm (iv)}] Suppose $n\in\N$ satisfies \eqref{c:main}. Then $t_T=t_c\in(0,\infty)$ and 
$\bar{c}_{t_c}=\BE_{t_c}\int |V^{(0,p)}|\,\BQ(dp)=\infty$.
        \item[{\rm (v)}] Suppose $n\in\N$ satisfies \eqref{c:main}. Then
        \begin{align*}
        \|c(t)\|_r\ge \frac{t_c}{\Delta_n(t)(t_c-t)},
    \end{align*}
    for $t<t_c$ and for all $r\in[1,\infty]$. 
       \item[{\rm (vi)}] Suppose that $n\in\N$ satisfies \eqref{c:main} and let $\delta>0$.  
Then 
\begin{align*}
    \|\theta(t)\|_r \geq \left(\frac{\bar\theta(t_c)}{\delta}+\frac{\I\{\bar\theta(t_c)=0\}}{2t\Delta_n(t)}\right)(t-t_c),
\end{align*}
for all $t\in[t_c, t_c + \delta]$ and $r\in\left[1,\infty\right]$. 
\end{enumerate}
\end{theorem}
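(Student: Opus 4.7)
The bound has two summands; I handle each separately.

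\textbf{Trivial summand.} When $\bar\theta(t_c)>0$, the term $\bar\theta(t_c)(t-t_c)/\delta$ reduces to monotonicity. A thinning coupling of the driving Poisson process shows that $t\mapsto \theta^p(t)$ is non-decreasing for each fixed $p$, and hence $t\mapsto \|\theta(t)\|_r$ is non-decreasing as well. Jensen's inequality for the probability measure $\BQ$ gives $\|\theta(t)\|_r\ge \|\theta(t)\|_1=\bar\theta(t)\ge \bar\theta(t_c)$. Since $(t-t_c)/\delta\le 1$ on $[t_c,t_c+\delta]$, this immediately yields $\|\theta(t)\|_r\ge (\bar\theta(t_c)/\delta)(t-t_c)$.

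\textbf{Non-trivial summand.} The genuine content appears when $\bar\theta(t_c)=0$: there we must establish $\bar\theta(t)\ge (t-t_c)/(2t\,\Delta_n(t))$ (the statement for general $r$ then follows via the same Jensen inequality). My plan is to follow the Aizenman--Barsky strategy, as transferred to the continuum RCM in \cite{CaicedoDickson24}. I would introduce a ghost field $h>0$ by independently marking each Poisson point ``green'' with probability $1-e^{-h}$, and work with the ghost magnetization
\begin{align*}
M(t,h):=1-\BE_t\exp\bigl(-h\,|V^{Q_0}|\bigr),
\end{align*}
which decreases to $\bar\theta(t)$ as $h\downarrow 0$. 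Computing $\partial_t M(t,h)$ via the Mecke formula \eqref{e:Meckev} (adding one Poisson point), isolating the pivotal contribution in which the inserted point connects the typical cluster to a fresh subcluster containing a green vertex, and repackaging the resulting expression via the self-avoiding-walk bookkeeping behind $\Delta_n(t)$ (see \eqref{cnbound1}, \eqref{cnbound2}, \eqref{e:Delta_n}), I would derive a differential inequality of the form
\begin{align*}
\partial_t M(t,h)\ge \frac{1-M(t,h)}{2t\,\Delta_n(t)}.
\end{align*}
Integrating from $t_c$ to $t$, using $M(t_c,0^+)=\bar\theta(t_c)=0$ and the monotonicity of $s\mapsto s\Delta_n(s)$, and finally letting $h\downarrow 0$, produces $\bar\theta(t)\ge (t-t_c)/(2t\,\Delta_n(t))$, which is the required bound.

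\textbf{Main obstacle.} The heart of the argument is the differential inequality with the explicit constant $1/(2t\,\Delta_n(t))$. This needs a continuum Russo/Margulis identity for Poisson intensities to isolate the pivotal mass, combined with a Simon--Lieb-type estimate that re-expresses this pivotal mass through $\Delta_n(t)$ by the very same self-avoiding-path counting used in the proof of (v); the weights coming from $d_\varphi^{[n]}$ in \eqref{e:653} are what prevent the path-sum from blowing up uniformly above $t_c$. Once this inequality is in hand, the remaining calculus, the limit $h\downarrow 0$, and the reduction from $\bar\theta(t)$ to $\|\theta(t)\|_r$ via Jensen are routine.
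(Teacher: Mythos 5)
Your proposal addresses only part (vi) of the six-part theorem; parts (i)--(v) are not touched, and in the paper (vi) is not independent of them: the argument for (vi) uses $\bar{c}(t_c)=\infty$, which comes from the susceptibility bound in (v). Within (vi), your treatment of the ``trivial summand'' (monotonicity of $\theta^p$ in $t$ plus Jensen) coincides with the paper's Theorem \ref{t:per_mean_field_bound}, and the reduction of $\|\theta(t)\|_r$ to $\bar\theta(t)$ is fine.

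The gap is in the ``non-trivial summand''. The single differential inequality you propose, $\partial_t M(t,h)\ge (1-M(t,h))/(2t\Delta_n(t))$, is not what Russo--Margulis plus a Simon--Lieb bound delivers, and it cannot hold in the form you need: nothing in that derivation is sensitive to $t_c$, yet for $t<t_T$ one has $M(t,h)=O(h)$ and $\partial_t M(t,h)=O(h)$ as $h\downarrow 0$, while your right-hand side stays bounded away from $0$; so the inequality would fail below criticality and there is no mechanism producing it only for $t\ge t_c$. The actual Aizenman--Barsky input (Lemma \ref{l:diff_ineqs_mag}(ii)) is $\widebar{M}\le\gamma\,\partial_\gamma\widebar{M}+\|M(t,\gamma)\|_2^2+tM^*\partial_t\widebar{M}$, i.e.\ the lower bound on $\partial_t\widebar{M}$ carries the extra term $\gamma\,\partial_\gamma\widebar{M}$, which is exactly what keeps the inequality consistent with $\bar\theta\equiv 0$ on $(0,t_c)$ and which forces the two-parameter integration over $[t_c,t']\times[\gamma_1,\gamma_2]$ rather than a one-dimensional integration in $t$. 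Handling the resulting boundary term as $\gamma_1\downarrow 0$ requires the mean-field lower bound $\widebar{M}(t_c,\gamma)\ge\sqrt{\gamma/2}\,/\Delta_n(t_c)$ (Lemma \ref{l:M_lower_bound}), which in turn rests on the \emph{first} Aizenman--Barsky inequality, on the Simon--Lieb-type bound $M^*(t,\gamma)\le\Delta_n(t)\widebar{M}(t,\gamma)$ (Lemma \ref{l:M_upper_bound}), and on $\bar{c}_f(t_c)=\infty$. None of these ingredients appears in your sketch, so the plan as stated does not close.
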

\begin{proof} Assertion (i) follows from Lemma
  \ref{l:not_triv}, (ii) follows from Theorem \ref{tmoments_2},
 (iii) follows from Theorem \ref{t_diametr_moments_2}. Other claims will be proven later on in this section. More precisely, (iv)
  follows from Proposition \ref{p:equality_t_T}, Corollary
  \ref{c:sharpness_of_ph_tran} and the definition of $t^{(1)}_T=t_c$,
(v) follows from Proposition \ref{p:susc_mean_field_bound} and
(vi)  follows from Theorem \ref{t:per_mean_field_bound}.
\end{proof}

\begin{remark}\rm For $n=1$ the condition \eqref{c:main} boils down to
$\|d_\varphi\|_{\infty,\infty}<\infty$. This is the main assumption made in \cite{CaicedoDickson24}. 
\end{remark}

\begin{remark}\rm \label{r:equality_t_T_inf}
Let $k\in\N$. By \eqref{e:Delta_2n} the condition 
   \begin{align}\label{e:equality_t_T_inf}
        \|d_\varphi\|_{\infty,1}+ \|d_\varphi^{[k]}\|_{\infty,2}<\infty
\end{align}
is sufficient for \eqref{c:main} with $n=2k$.
\end{remark}

\begin{remark}\rm \label{r:sharpness}
  Note that if $\phi^*_t(u)$ has a heavy tail in $u$ (thicker than exponential), then the diameter of the cluster of an arbitrary vertex has a heavy tail distribution (thicker than $\phi^*_t(u)$), which can be shown by Proposition \ref{p:diameter_moments}.
\end{remark}

\subsection{Sufficient condition for non-triviality of the phase transition}

We start with the following simple observation. 

\begin{proposition}\label{p:nottriv_t_T}
    We have that $t^{(1)}_T\ge \|d_\varphi\|_{2,2}^{-1}$.
\end{proposition}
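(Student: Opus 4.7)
My plan is to bound the series $\bar c(t) = \sum_{n=0}^\infty \bar c_n(t)$ geometrically with ratio $t\|d_\varphi\|_{2,2}$, using an iterated Cauchy--Schwarz argument in $L^2(\BQ)$.

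First, I would apply the bound \eqref{cnbound2}, which gives $c^{(0,p)}_n(t)\le t^n f_n(p)$ where $f_n(p):=\int d_\varphi^{(n)}(p,q)\,\BQ(dq)$. The recursive structure of $d_\varphi^{(n)}$ yields $f_0\equiv 1$ and
\begin{align*}
f_n(p)=\int d_\varphi(p,r)\,f_{n-1}(r)\,\BQ(dr).
\end{align*}

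Next, I would apply the Cauchy--Schwarz inequality pointwise in $p$ to get
\begin{align*}
f_n(p)^2 \le \left(\int d_\varphi(p,q)^2\,\BQ(dq)\right)\|f_{n-1}\|_{L^2(\BQ)}^2,
\end{align*}
and then integrate over $p$ against $\BQ$ to obtain $\|f_n\|_{L^2(\BQ)}\le \|d_\varphi\|_{2,2}\,\|f_{n-1}\|_{L^2(\BQ)}$. Iterating this, starting from $\|f_0\|_{L^2(\BQ)}=1$, gives $\|f_n\|_{L^2(\BQ)}\le \|d_\varphi\|_{2,2}^n$.

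Finally, a further Cauchy--Schwarz bound, $\int f_n(p)\,\BQ(dp)\le \|f_n\|_{L^2(\BQ)}$, yields $\bar c_n(t)\le (t\|d_\varphi\|_{2,2})^n$. Summing the resulting geometric series shows that $\|c(t)\|_1=\bar c(t)$ is finite whenever $t<\|d_\varphi\|_{2,2}^{-1}$, so the definition \eqref{e:meancr} gives $t_T^{(1)}\ge \|d_\varphi\|_{2,2}^{-1}$. There is no real obstacle here; the only conceptual ingredient is recognizing that $d_\varphi$ defines a Hilbert--Schmidt kernel on $L^2(\BQ)$ whose operator norm is bounded by $\|d_\varphi\|_{2,2}$, which then controls the iterated kernels $d_\varphi^{(n)}$ geometrically.
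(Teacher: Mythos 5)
Your proof is correct and follows essentially the same route as the paper: since $d_\varphi^{(n)}$ is symmetric, your $\|f_n\|_{L^2(\BQ)}$ is exactly the paper's $\|d_\varphi^{(n)}\|_{1,2}$, and both arguments establish the recursion $\|f_n\|_{L^2(\BQ)}\le \|d_\varphi\|_{2,2}\|f_{n-1}\|_{L^2(\BQ)}$ via Cauchy--Schwarz and then pass from $\bar c_n(t)$ to the $L^2$-norm by Jensen. The only cosmetic difference is that the paper explicitly disposes of the trivial cases $\|d_\varphi\|_{2,2}\in\{0,\infty\}$ first, which your geometric-series bound in fact handles implicitly.
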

\begin{proof}
    The claim is trivial for $\|d_\varphi\|_{2,2}\in \{0,\infty\}$. Suppose that $\|d_\varphi\|_{2,2}\in (0,\infty)$.
    Let $n\in\N$. From the Cauchy–Schwarz inequality we have
    \begin{align*}
\|d^{(n)}_\varphi\|_{1,2}^2 = \int \left( \int d^{(n-1)}_\varphi (p,q_1) d_\varphi(q_1,q) \, \BQ^2 (d(p,q_1)) \right)^2 \, \BQ(d q) \le  
\|d_\varphi\|_{2,2}^2 \|d^{(n-1)}_\varphi\|_{1,2}^2.
    \end{align*}
Therefore $\|d^{(n)}_\varphi\|_{1,2}\le \|d_\varphi\|_{2,2}^n$.
On the other hand, we obtain from \eqref{cnbound2} and Jensen's inequality that
    \begin{align*}
        \bar{c}_n(t)\le t^n \|d^{(n)}_\varphi\|_{1,1}\le t^n \|d^{(n)}_\varphi\|_{1,2},\quad t>0.
    \end{align*}
Therefore,
\begin{align*}
\bar{c}(t)=\sum_{n=0}^\infty \bar{c}_n(t)\le \sum_{n=0}^\infty t^n \|d_\varphi\|_{2,2}^n, 
\end{align*}
which converges if $t<\|d_\varphi\|_{2,2}^{-1}$.      
\end{proof}

\begin{remark}\label{r:Gilbert}\rm
Consider the Gilbert graph from Example \ref{ex:Gilbert} and assume that $\BQ\{0\}<1$. 
It was proved in
\cite{Hall85} that $t^{(1)}_T\in(0,\infty)$ if and only if $q_2:=\int r^{2d}\,\BQ(dr)<\infty$.
Note that $q_2<\infty$ is equivalent to $\|d_\varphi\|_{2,2}>0$.
If $q_2=\infty$ then it was shown in \cite{Hall85} that 
$c^{(0,p)}(t)=\infty$ for all $p\ge 0$ and $t>0$, so that $t_T=t^{(1)}_T=0$ in this case.
The authors of \cite{GouTheret19} introduce another
critical intensity $\hat{t}\le t_c$ (called $\hat{\lambda}_c$ on p.\ 3717). If $q_2<\infty$, then
Theorem 2 in this paper shows that $\BE_t\lambda_d(Z_0)<\infty$
for all $t<\hat{t}$, where $Z_0$ denotes the union of all
balls $B(x,r)$, $(x,r)\in\eta$, with $0\in B(x,r)$. 
Moreover, we then also have
\begin{align*}
\BE_t\eta(Z_0\times[0,\infty))<\infty,\quad t<\hat{t},
\end{align*}
Later it was proved in \cite{Duminiletal2020} that $t_c=\hat{t}$ provided
that $\int r^{5d-3}\,\BQ(dr)<\infty$.
\end{remark}

We continue with the following simple fact.

\begin{proposition}\label{p:nottriv_t_c}
Suppose $A\in \cB(\BM)$ satisfies $\BQ(A)>0$. Assume that for some
symmetric function $\varphi_0\colon \R^d\to [0,1]$, such that $\int \varphi_0(x)\, dx \in(0,\infty)$, we have   
\begin{align*}
    \varphi(x,p,q)\ge \varphi_0(x), \ \lambda_d\otimes\BQ^2\text{-a.e. } (x,p,q)\in\R^d\times A \times A.
\end{align*}
Then $t_c<\infty$.
\end{proposition}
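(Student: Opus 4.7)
The plan is to stochastically dominate $\xi$, restricted to points with marks in $A$, from below by an unmarked classical RCM on $\R^d$, to which the Penrose result quoted in Remark~\ref{ex:classical} applies. This is the standard thinning/coupling idea: the uniform lower bound $\varphi_0$ on $\R^d\times A\times A$ is exactly what lets us ignore marks once we restrict to $A$.

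First, I would single out $\eta_A:=\eta(\cdot\cap(\R^d\times A))$; by Poisson thinning and projection, its spatial projection $\eta_A^{\mathrm{sp}}$ is a Poisson process on $\R^d$ with intensity $t\BQ(A)>0$. Using the uniform variables $Z_{m,n}$ from the construction in Section~\ref{s:definitions}, which are independent of positions and marks, I would build an auxiliary graph $\xi_0$ on $\eta_A^{\mathrm{sp}}$ by retaining the edge between the points underlying indices $i_k,i_l$ (in an enumeration of $\eta_A$) precisely when $Z_{i_k,i_l}\le\varphi_0(X_{i_k}-X_{i_l})$. By the independence of the $Z$'s from everything else and their i.i.d.\ uniform distribution off the diagonal, $\xi_0$ has the law of a classical unmarked RCM on $\R^d$ driven by a Poisson process of intensity $t\BQ(A)$ and connection function $\varphi_0$.

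Next, by the hypothesis $\varphi(x,p,q)\ge \varphi_0(x)$ for $\lambda_d\otimes\BQ^2$-a.e.\ $(x,p,q)\in\R^d\times A\times A$, every edge of $\xi_0$ is simultaneously an edge of $\xi$ between the corresponding points of $\eta_A$. Hence any infinite cluster of $\xi_0$ lifts to an infinite cluster of $\xi$. Since $0<\int\varphi_0(x)\,dx<\infty$, Remark~\ref{ex:classical} gives a finite positive critical intensity $t_c^{(0)}$ for the classical RCM driven by $\varphi_0$, so choosing $t>t_c^{(0)}/\BQ(A)$ forces $\xi_0$, and hence $\xi$, to percolate with positive probability.

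To finish, the event $\{\xi\in C_\infty\}$ is translation invariant, so by the ergodicity of $\xi$ recalled in Section~\ref{sub:notationstatRCM} it has probability one for such $t$, which is equivalent to $\bar\theta(t)>0$. This forces $\theta^p(t)>0$ on a set of $p$ of positive $\BQ$-measure, and \eqref{e:tcmarked} then yields $t\ge t_c$, so $t_c\le t_c^{(0)}/\BQ(A)<\infty$. The only mildly delicate point is verifying that the coupled graph $\xi_0$ genuinely has the distribution of an unmarked classical RCM on $\R^d$; this is essentially tautological once one uses the independence of the $Z_{m,n}$'s from the positions and marks built into Section~\ref{s:definitions}.
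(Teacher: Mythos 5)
Your proposal is correct and follows essentially the same route as the paper: the paper also couples an unmarked RCM $\xi'$ with connection function $\varphi_0$, driven by the points of $\eta$ with marks in $A$, so that $\xi'\subset\xi[\R^d\times A]$, and then invokes the result of Penrose recalled in Remark \ref{ex:classical} to conclude $t_c<\infty$. You merely spell out the coupling via the $Z_{m,n}$ and the final ergodicity/definition-of-$t_c$ step that the paper leaves implicit, which is fine.
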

\begin{proof}  Let $\xi'$ be a RCM driven by $\eta_{\R^d\times A}$ with a connection
function $\varphi_0$. It is easy to build a coupling such that
$\xi'\subset \xi[\R^d\times A]$. By \cite{Pen91} we know that $\xi'$
percolates for large intensity. Therefore, $t_c<\infty$ for  $\xi$ as well.
\end{proof}

\begin{proposition}\label{p:nottriv_t_c_2}
    Assume that $\|d_\varphi\|_{1,1}\in(0,\infty)$, then $t_c<\infty$.
\end{proposition}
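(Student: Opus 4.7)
The plan is to reduce to Proposition~\ref{p:nottriv_t_c}. The hypothesis $\|d_\varphi\|_{1,1} = \iiint \varphi(x,p,q)\,dx\,\BQ^2(d(p,q)) \in (0,\infty)$, together with Lusin's theorem and the outer regularity of $\lambda_d \otimes \BQ^2$ on the Polish space $\R^d \times \BM^2$, yields $\epsilon > 0$, a bounded symmetric Borel set $B_0 \subset \R^d$ with $\lambda_d(B_0) > 0$, and Borel sets $A_1, A_2 \subset \BM$ with $\BQ(A_1), \BQ(A_2) > 0$, such that $\varphi(x,p,q) \ge \epsilon$ for $\lambda_d \otimes \BQ^2$-a.e.\ $(x,p,q) \in B_0 \times A_1 \times A_2$. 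Here the symmetry of $\varphi$ on $\R^d \times \BM^2$ is used to arrange $B_0 = -B_0$.

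If $\BQ(A_1 \cap A_2) > 0$, setting $A := A_1 \cap A_2$ and $\varphi_0 := \epsilon\,\I_{B_0}$ satisfies the hypothesis of Proposition~\ref{p:nottriv_t_c} on $\R^d \times A \times A$, yielding $t_c < \infty$ directly. In the harder ``bipartite'' case $\BQ(A_1 \cap A_2) = 0$, I would introduce an auxiliary random graph $\tilde\xi$ whose edges correspond to $2$-step paths in $\xi$ through intermediates with marks in $A_2$: split $\eta$ by independent $(1/2)$-coloring into two independent Poisson processes $\eta', \eta''$ of intensity $t\lambda/2$, and put an edge in $\tilde\xi$ between $(x,p), (y,q) \in \eta' \cap (\R^d \times A_1)$ whenever some $(z,r) \in \eta''$ with $r \in A_2$ satisfies $(x,p) \sim (z,r) \sim (y,q)$ in $\xi$. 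By Poisson thinning, together with $\varphi \ge \epsilon$ on $B_0 \times A_1 \times A_2$ and the symmetry of $\varphi$, the conditional edge probability in $\tilde\xi$ (given the $\eta'$-vertex set) is at least
\begin{align*}
1 - \exp\!\left(-\tfrac{t}{2}\,\epsilon^2\,\BQ(A_2)\,\psi_0(y-x)\right), \quad \text{where } \psi_0(w) := \lambda_d(B_0 \cap (B_0 + w)),
\end{align*}
which is symmetric in $y-x$ (since $B_0 = -B_0$) and integrates to $\lambda_d(B_0)^2 > 0$. Applying Proposition~\ref{p:nottriv_t_c} (after stochastically replacing $\tilde\xi$ by an independent-edge RCM on $\R^d \times A_1$ with this lower-bound connection function) gives percolation of the dominated RCM, hence of $\tilde\xi$, hence of $\xi$ for large $t$, so $t_c < \infty$.

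The main obstacle is that different edges of $\tilde\xi$ sharing a common $\eta''$-intermediate are positively correlated, so $\tilde\xi$ is not strictly a stationary marked RCM in the sense of Section~\ref{s:stationaryRCM}. This will be resolved by a further independent coloring of $\eta''$ that assigns disjoint sub-Poisson populations of intermediates to disjoint target pairs, which restores edge independence at the price of a slightly smaller but still positive lower bound; alternatively, one may invoke an FKG-type positive-correlation inequality combined with the Penrose renormalization argument underlying Proposition~\ref{p:nottriv_t_c} to reach the same conclusion.
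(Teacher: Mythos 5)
Your skeleton parallels the paper's proof up to a point: both arguments extract mark sets of positive $\BQ$-measure and a spatial set of positive Lebesgue measure on which $\varphi$ is bounded below, settle the case where one can take the two mark sets equal via Proposition~\ref{p:nottriv_t_c}, and are then left with a bipartite situation. There the routes diverge, and the divergence is where your argument has a genuine gap. The paper disposes of the bipartite case by coupling $\xi[\R^d\times(A\cup B)]$ with a two-mark RCM in which a mark cannot connect to itself and invoking \cite[Lemma 2.2]{CaicedoDickson24}, which extends Penrose's construction \cite{Pen91} to exactly this marked bipartite model, so no new percolation argument is needed. You instead build the two-step-path graph $\tilde\xi$ and want to push it through Proposition~\ref{p:nottriv_t_c}; but $\tilde\xi$ is not an RCM, since edges sharing an $\eta''$-intermediate are dependent (as you concede), and neither proposed repair is viable as stated. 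A further coloring of $\eta''$ assigning disjoint intermediate populations to distinct pairs of $\eta'$-points cannot keep a uniform positive per-pair intensity, because the number of relevant pairs in a bounded region is unbounded (quadratic in a Poisson number of points), so the lower bound on the edge probability degenerates; and ``FKG plus the Penrose renormalization argument'' names, rather than performs, precisely the percolation argument that is the whole content of this step --- in the dependent, marked setting it is essentially what \cite[Lemma 2.2]{CaicedoDickson24} supplies, and it cannot be treated as a black box. So the decisive claim, percolation of $\tilde\xi$ for large $t$, is not actually established; the payoff of your route (avoiding the citation) is therefore not yet earned.

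A second soft spot is your first step. Lusin's theorem yields a compact set $K$ of nearly full measure on which $\varphi$ is continuous, hence a product neighbourhood $U$ with $\varphi\ge\epsilon$ on $K\cap U$; it does not yield a product set $B_0\times A_1\times A_2$ of positive measure on which $\varphi\ge\epsilon$ holds a.e., because $K\cap U$ need not essentially contain any positive-measure rectangle. Indeed, the rectangle extraction is not a consequence of $\|d_\varphi\|_{1,1}\in(0,\infty)$ alone: take $\BM=[0,1]$ with $\BQ$ uniform and $\varphi(x,p,q)=\I\{\|x\|\le 1\}\I\{p-q\bmod 1\in G\}$ with $G=-G$ of measure close to $1$ whose complement is open and dense; by Steinhaus' theorem no rectangle $A_1\times A_2$ of positive measure can satisfy $\varphi\ge\epsilon$ a.e.\ on $B_0\times A_1\times A_2$. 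The paper asserts the analogous extraction without detail, so you are no worse off than the authors here, but your appeal to Lusin and outer regularity is not a derivation of it, and this step would need to be reworked (e.g.\ arguing directly with the set $\{\varphi\ge\epsilon\}$) for a complete proof. Finally, a minor point: the symmetry $\varphi(x,p,q)=\varphi(-x,q,p)$ transfers the lower bound to $(-B_0)\times A_2\times A_1$, not to $(-B_0)\times A_1\times A_2$, so you cannot simply ``arrange $B_0=-B_0$''; fortunately you never need this, since $w\mapsto\lambda_d(B_0\cap(B_0+w))$ is symmetric for any $B_0$.
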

\begin{proof}
  Suppose that the mark space contains only two marks
  $\BM=\{p_1, p_2\}$, and a mark cannot directly connect with
itself, i.e.\ $\int\varphi(x,p_i,p_i)\,dx=0$ for $i\in\{1,2\}$
  (otherwise we may refer to Proposition \ref{p:nottriv_t_c}). 
In this case, we can show $t_c<\infty$ as in \cite[Lemma  2.2]{CaicedoDickson24}, 
whose proof extends the approach from  \cite{Pen91} to the marked case.
Note that non-triviality of $\|d_\varphi\|_{1,1}$
implies existence of $\varepsilon>0$, $C\in\cB(\R^d)$ and
  $A,B\in \cB(\BM)$ with $\min\{\lambda_d(C),\BQ(A),\BQ(B)\}>0$ such that
    \begin{align*}
    \varphi(x,p,q)=\varphi(-x,q,p)\ge \varphi_0(x), \ \lambda_d\otimes\BQ^2\text{-a.e. } (x,p,q)\in\R^d\times A \times B,
    \end{align*}
where $ \varphi_0(x):=\varepsilon\I\{x\in\pm C\}$. We can again show $t_c<\infty$ by an appropriate coupling
($\xi'\subset \xi[\R^d\times (A\cup B)]$) with the connection function $\varphi_0(x)$ as in Proposition \ref{p:nottriv_t_c}.
\end{proof}

\begin{remark}\label{r:triv_t_T_inf}\rm
By definition, we have $D^*_\varphi=\|d_\varphi\|_{\infty,1}$. Therefore, we obtain from Lemma \ref{l:not_triv} that $\|d_\varphi\|_{\infty,1}<\infty$ is equivalent to $t_T>0$ and hence implies $t_c>0$. On the other hand,
$\|d_\varphi\|_{\infty,1}<\infty$ implies $\|d_\varphi\|_{1,1}<\infty$. Hence, if
$0<\|d_\varphi\|_{\infty,1}<\infty$ then Proposition \ref{p:nottriv_t_c_2} shows that $t_c<\infty$ and hence also $t_T<\infty$.
Altogether we see that $0<\|d_\varphi\|_{\infty,1}<\infty$ 
is necessary and sufficient for  $t_T\in(0,\infty)$,  
and sufficient for $t_c\in(0,\infty)$. 
\end{remark}

\begin{remark}\rm \label{r:suff}  
Notice that 
    \begin{align*}
\| d_\varphi\|_{1,1} \le \max(\| d_\varphi\|_{\infty,1},\| d_\varphi\|_{2,2})
\le \| d_\varphi\|_{\infty,2}\le \| d_\varphi\|_{\infty,\infty}.
    \end{align*}
Therefore, by Remark \ref{r:equality_t_T_inf} the condition  $\| d_\varphi\|_{\infty,2}<\infty$ 
is sufficient for \eqref{c:main} with $n=2$.
\end{remark}

\begin{example}\label{ex:min-reach}\rm
Let $\xi$ be a stationary marked RCM 
and suppose that $R\colon \BM^2\to \R_+$ is a  symmetric and meassurable
function   such that $\| R^{d}\|_{\infty,2}<\infty$. Assume that
\begin{align*}
    \varphi(x,p,q)\le \I\{ |x|\le R(p,q)\}, \ \lambda_d\otimes\BQ^2\text{-a.e. } (x,p,q)\in\R^d\times \BM^2.
\end{align*}
Then $d_\varphi(p,q)\le \kappa_d R^d(p,q)$ for $\BQ^2$-a.e. $(p,q)\in\BM^2$, 
where $\kappa_d$ is the volume of a unit ball in $\R^d$. Therefore, $\xi$ undergoes a 
strong sharp phase transition, since $\| d_\varphi\|_{\infty,2}\le \kappa_d\| R^{d}\|_{\infty,2}<\infty$. 

For example, let $\xi$ be a {\em min-reach} RCM (see
\cite{CaicedoKolesnikov25}) with $\BM=\R_+$ and
$R(p,q)=R_0(\min(p,q))$, where $R_0\colon \R_+\to \R_+$ is a
non-decreasing function. Assume that the {\em reach function} $R_0$ has a
finite moment of order $2d$ with respect to the mark distribution
$\BQ$. Then 
\begin{align*}
    \| R^{d}_0\|^2_{\infty,2}
\le \int R^{2d}_0(q)\,\BQ( dq)<\infty.
\end{align*}
Hence $\xi$ has a strong sharp phase transition.
In the special case $\varphi(x,p,q)=\I\{|x|\le \min(p,q)\}$, all the randomness comes from
the stationary marked Poisson process. The RCM $\xi$ is then a random version of the {\em symmetric random disk graph}; see \cite{Abuetal12}. It has a strong sharp phase transition if the radius distribution $\BQ$ has a finite moment of order $2d$.  
\end{example}

\begin{example}\label{ex:weighted_2}\rm
 Let $\varepsilon>0$ and consider
 Example \ref{ex:weighted} with
 $g(p,q):=(p\vee q)^\varepsilon:=\max(p,q)^\varepsilon$.  All that follows also applies to $g(p,q):=(p+q)^\varepsilon$ 
as well, while $p+q\ge p\vee q$.
It is easy to see that  $\|d_\varphi\|_{\infty,1}<\infty$ if and only if $\varepsilon<1$.
Assume $\varepsilon<1$ and take $n\in\N$. We have
\begin{align*}
|d_\varphi^{(n)}\|_{\infty,2}^2 \le 
m_\rho^{2n} \int_0^1 \left(\int_0^1 [p_1(p_1\vee p_2)\cdots (p_{n-1}\vee p_n)]^{-\varepsilon} 
\, d \boldsymbol{p}_{n-1} \right)^2 \, d p_n.
\end{align*}        
For $\varepsilon<1/2$ we obtain
$\|d_\varphi\|_{\infty,2}<\infty$ from a direct calculation.  
For $\varepsilon\in [1/2,1)$, it is also not difficult to show that 
$\|d_\varphi^{(n)}\|_{\infty,2}<\infty$ if
$\varepsilon\in[1-\frac1{2(n-1)},1-\frac1{2n})$ for some $n\ge 2$.
Therefore $\|d_\varphi^{(2)}\|_{\infty,2}<\infty$ for $\varepsilon=1/2$
while for $\varepsilon>1/2$ we have
$\|d_\varphi^{(n)}\|_{\infty,2}<\infty$,  where $n=\lceil(2-2\varepsilon)^{-1}\rceil$.
Altogether we obtain for each $\varepsilon\in(0,1)$ that
our condition  \eqref{e:equality_t_T_inf} holds. Hence, we have a strong sharp phase transition at $t_c=t_T$.

We note in passing that the condition
$\|d_\varphi\|_{2,2}<\infty$ is also equivalent with $\varepsilon<1$, while
condition $\|d_\varphi\|_{1,1}<\infty$ is equivalent
with $\varepsilon<2$.
\end{example}

\begin{remark}\rm  In the case $\varepsilon<1$, the functions from Example \ref{ex:weighted_2} are light-tailed versions
of the max-kernel and the sum kernel studied, e.g.\ in \cite{GHMM22}. There, the authors focus on the
case  $\varepsilon\in(1,2)$, which leads to a power law for the degree distribution. 
For our version, this distribution has finite exponential moments of all orders for $\varepsilon< 1$ and finite exponential moments of some orders for $\varepsilon=1$.
Another example is
$g(p,q)=(p\wedge q)^{-\delta}(p\vee q)^\varepsilon$ for given $\delta,\varepsilon>0$. Then 
  $\|d_\varphi\|_{\infty,1}<\infty$ if and only if $\varepsilon< 1+\delta$. In this case, $g$ is a light-tailed version of the preferential attachment kernel; see \cite{GHMM22}. 
Note that $g(p,q)\ge (p\vee q)^{\varepsilon-\delta}$. Therefore, just as in Example \ref{ex:weighted_2}, 
under the condition $\varepsilon<1+\delta$, we have
$\|d^{(n)}_\varphi\|_{\infty,2}<\infty$ for $n=\lceil(2-2(\varepsilon-\delta))^{-1}\rceil$. 
Another example is $g(p,q)=|p-q|^\varepsilon$ for some given $\varepsilon>0$. Then
$\|d_\varphi\|_{\infty,1}<\infty$ if and only if $\varepsilon< 1$. If  $\varepsilon<1$ then we have
$\|d_\varphi^{(n)}\|_{\infty,2}<\infty$ for $n=\lceil(2-2\varepsilon)^{-1}\rceil$, just as in Example \ref{ex:weighted_2}.
We believe that for most natural examples, the condition $\|d_\varphi\|_{\infty,1}$ 
(necessary for strong sharpness) implies 
$\|d^{(n)}_\varphi\|_{\infty,2}<\infty$ for some finite $n\in\N$. 
The min kernel and the product kernel from \cite{GHMM22} do not satisfy $\|d_\varphi\|_{\infty,1}<\infty$.
\end{remark}

\subsection{Susceptibility mean-field bound}

The following Proposition is a refinement of \cite[Lemma 2.3]{CaicedoDickson24} 
and \cite[Proposition 2.1]{DicksonHeydenreich22}.

\begin{proposition}\label{p:equality_t_T} Suppose that $n\in\N$ satisfies \eqref{c:main}. 
Then $t_T=t^{(r)}_T$ for all $r\in[1,\infty]$.
\end{proposition}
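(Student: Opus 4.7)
The plan is to establish the non-trivial inequality $t_T^{(1)} \le t_T$; combined with the chain $t_T = t_T^{(\infty)} \le t_T^{(r)} \le t_T^{(1)}$ for every $r \in [1,\infty]$ already noted in the excerpt, this forces $t_T^{(r)} = t_T$ for all such $r$.

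Fix $t < t_T^{(1)}$. Since $V^v$ is stochastically non-decreasing in the intensity measure (Proposition \ref{p2.2}), the map $s \mapsto \|c(s)\|_1$ is non-decreasing, so the definition of $t_T^{(1)}$ yields $\bar{c}(t) = \|c(t)\|_1 < \infty$. The heart of the argument is the cluster-domination bound of Corollary \ref{c:clus_dom}, applied at the typical vertex $v = (0,p)$ with the same $n$ that appears in \eqref{c:main}:
\begin{align*}
V^{(0,p)} \le_{st} V^{(0,p)}_{\le n-1} + \int V^x(\xi^{[n]}_x)\, V^{(0,p)}_n(dx),
\end{align*}
where the $\xi^{[n]}_x$ are, conditionally on $C^{(0,p)}_{\le n}$, independent random graphs distributed as $\xi^x$. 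Taking expectations, conditioning on $C^{(0,p)}_{\le n}$, and applying Campbell's theorem produces
\begin{align*}
c^{(0,p)}(t) \le c^{(0,p)}_{\le n-1}(t) + \int c^x(t)\, \Lambda^{(0,p)}_n(dx).
\end{align*}

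Next I would invoke stationarity (Remark \ref{r:432}) to replace $c^{(y,q)}(t)$ by $c^{(0,q)}(t)$, and use the disintegration \eqref{e:vpq} to rewrite the integral as $\int c^{(0,q)}(t)\, v^{p,q}_n(t)\, \BQ(dq)$. The self-avoiding-walk estimate \eqref{cnbound1} combined with hypothesis \eqref{c:main} gives $\|v_n(t)\|_{\infty,\infty} \le t^n \|d_\varphi^{[n]}\|_{\infty,\infty} < \infty$; pulling this supremum out yields
\begin{align*}
c^{(0,p)}(t) \le c^{(0,p)}_{\le n-1}(t) + \|v_n(t)\|_{\infty,\infty}\, \bar{c}(t).
\end{align*}
Taking the essential supremum over $p$ and invoking Lemma \ref{l:Delta_n} (which ensures $\Delta_n(t) = c^*_{\le n-1}(t) + \|v_n(t)\|_{\infty,\infty} < \infty$ under \eqref{c:main}) gives
\begin{align*}
c^*(t) \le c^*_{\le n-1}(t) + \|v_n(t)\|_{\infty,\infty}\, \bar{c}(t) \le \Delta_n(t)\bigl(1 + \bar{c}(t)\bigr) < \infty.
\end{align*}
Hence $t \le t_T$; letting $t \uparrow t_T^{(1)}$ yields $t_T^{(1)} \le t_T$, as desired.

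There is no genuine obstacle: every ingredient is already assembled in the paper. The one point deserving care is the conditioning step that produces $\int c^x(t)\, \Lambda^{(0,p)}_n(dx)$: it is crucial that the $\xi^{[n]}_x$ in Corollary \ref{c:clus_dom} are driven by the \emph{unthinned} Poisson process, so that their conditional expected cluster sizes equal exactly $c^x(t)$. This is precisely why the argument should use Corollary \ref{c:clus_dom} (via Proposition \ref{p2.4}) rather than the thinned analogue Proposition \ref{p2.3}, which would confront the open higher-generation monotonicity question noted after Proposition \ref{p2.2+}.
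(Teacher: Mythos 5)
Your proposal is correct and follows essentially the same route as the paper: both reduce to showing $t_T^{(\infty)}\ge t_T^{(1)}$ via Corollary \ref{c:clus_dom}, identify the conditional expected cluster size as $c^{(0,q)}(t)$ by stationarity, disintegrate through the density $v^{p,q}_n(t)$ of \eqref{e:vpq}, and bound $c^*(t)$ by a multiple of $\Delta_n(t)\bar{c}(t)$ using Lemma \ref{l:Delta_n}. The only cosmetic difference is that the paper uses $\bar{c}(t)\ge 1$ to write the cleaner bound $c^*(t)\le\Delta_n(t)\bar{c}(t)$ (which it reuses later), whereas you settle for $\Delta_n(t)(1+\bar{c}(t))$, which suffices here.
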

\begin{proof} We exclude the trivial case $\|d_\varphi\|_{\infty,1}=0$, since
then  $t_T=t^{(r)}_T=\infty$. Assume that $\|d_\varphi\|_{\infty,1}>0$.
We only need to show that $t_T=t^{(\infty)}_T\ge t^{(1)}_T$. 
By Corollary \ref{c:clus_dom} we have
\begin{align*}
c^*(t)  \le  c^*_{\le n-1}(t)
+\esssup_{p\in\BM} \BE_t \int  \BE_t\big[ |V^{(x,q)}(\xi^{[n]})| \bigm\vert C^{(0,p)}_{\le n}\big]\,V^{(0,p)}_n(d(x,q)).
\end{align*}
By the definition of $\xi^{[n]}$ and stationarity, the conditional expectation in the above integral
equals $c^{(0,q)}(t)$. Therefore, we obtain from the definition \eqref{e:vpq} of density
$v^{p,q}_n(t)$ that
\begin{align*}
c^*(t)  \le  c^*_{\le n-1}(t)
+\esssup_{p\in\BM} \int  c^{(0,q)}(t) v^{p,q}_n(t)\,\BQ(dq).
\end{align*}
It follows that $c^*(t)\le c^*_{\le n-1}(t)+\bar{c}(t)\|v_n(t)\|_{\infty,\infty}$
and since $\bar{c}(t)\ge 1$ we obtain 
\begin{align}\label{i:Delta_n}
c^*(t)\le \Delta_n(t) \bar{c}(t).
\end{align}
By assumption \eqref{c:main} and Lemma \ref{l:Delta_n} we have $\Delta_n(t)<\infty$, 
so the asserted inequality $t_T\ge t^{(1)}_ T$ follows.
\end{proof}

\begin{proposition}[Susceptibility mean-field bound] \label{p:susc_mean_field_bound}
    Suppose $n\in\N$ satisfies \eqref{c:main}. Then
    \begin{align}\label{e:susc_mean_field_bound}
        \|c(t)\|_r\ge \frac{t_T}{\Delta_n(t)(t_T-t) } 
    \end{align}
    for $t<t_T$ and for all $r\in[1,\infty]$.
\end{proposition}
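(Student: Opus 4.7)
The plan is to perform two reductions and then establish a differential inequality. First, Jensen's inequality on the probability space $(\BM,\BQ)$ gives $\|c(t)\|_1\le\|c(t)\|_r$ for every $r\in[1,\infty]$, so it suffices to prove the $r=1$ case. Second, the inequality \eqref{i:Delta_n} established in the proof of Proposition~\ref{p:equality_t_T}, namely $c^*(t)\le\Delta_n(t)\,\bar c(t)$, rearranges to $\bar c(t)\ge c^*(t)/\Delta_n(t)$. The claim thus reduces to the sharp mean-field lower bound $c^*(t)\ge t_T/(t_T-t)$ for $0\le t<t_T$.

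The plan for the latter is to derive the differential inequality $(c^*)'(t)\ge c^*(t)^2/t_T$; setting $u:=1/c^*$ this reads $u'(t)\le -1/t_T$, and integrating from $t$ to $t_T$ together with $u(t_T^-)=0$ yields $1/c^*(t)\le(t_T-t)/t_T$. To obtain this ODE inequality, I would start from a Mecke--Russo identity
\[
\frac{d}{dt}c^v(t)=\int\BE_t\bigl[|V^v(\xi^{v,y})|-|V^v(\xi^v)|\bigr]\,\lambda(dy),
\]
lower-bound the integrand by the cluster of $y$ on the event that $y$ attaches to $V^v$, and apply a Harris/FKG argument (both $\{y\leftrightarrow V^v(\xi^v)\}$ and $|V^y(\xi^y)|$ are increasing functionals of the edge variables of the RCM) to obtain
\[
\frac{d}{dt}c^v(t)\ge \int \BP_t\bigl(y\leftrightarrow V^v(\xi^v)\bigr)\,c^y(t)\,\lambda(dy).
\]
Taking essential supremum over $v$ and bounding the right-hand side below should then yield $(c^*)'(t)\ge c^*(t)^2/t_T$.

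The principal obstacle is recovering the sharp constant $1/t_T$ in the differential inequality. Straightforward lower bounds such as $\BP_t(y\leftrightarrow V^v)\ge\varphi(v,y)$ or a naive use of $c^y(t)\ge 1$ produce only linear growth of $c^v$, while estimates using $\BP_t(y\leftrightarrow V^v)\ge\BE_t\int\varphi(x,y)V^v(dx)(1-o(1))$ combined with $c^y(t)\le c^*(t)$ yield constants worse than $1/t_T$ by a factor of $t_TD^*_\varphi\ge 1$ (cf.\ Lemma~\ref{l:not_triv}). Producing the sharp constant appears to require a BK-type disjoint-occurrence inequality for the RCM, combined with the spatial Markov property of Proposition~\ref{p2.1} and the characterisation of $t_T$ as the blow-up threshold $c^*(t_T^-)=\infty$, so that the connection integral has the correct critical scaling $1/(t_T-t)$. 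A secondary technicality is that $c^v(t)$ is only $\lambda$-a.e.\ defined and $c^*(t)$ is an essential supremum; this should be addressable via approximate maximizers and the monotonicity of $t\mapsto c^v(t)$, passing to a limit after deriving the ODE inequality pointwise.
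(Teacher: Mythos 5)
Your two reductions are fine and coincide with the paper's: H\"older/Jensen reduces to $r=1$, and \eqref{i:Delta_n} reduces the claim to the sharp bound $c^*(t)\ge t_T/(t_T-t)$ for $t<t_T$. The gap is precisely at that core bound. The differential inequality you propose, $(c^*)'(t)\ge c^*(t)^2/t_T$, is not derived and is the wrong tool: a \emph{lower} bound on the derivative of this strength would force the susceptibility to blow up no slower than $(t_T-t)^{-1}$ \emph{and} no faster (near $t_T$ it implies the critical exponent $\gamma\le 1$), so it cannot be expected to hold in the generality of the proposition; mean-field lower bounds on the susceptibility classically come from the opposite direction, an \emph{upper} bound $\chi'\lesssim\chi^2$ integrated against the blow-up at $t_T$. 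Your proposed intermediate step also fails on its own terms: on the event $\{y\leftrightarrow V^v(\xi^v)\}$ the increment $|V^v(\xi^{v,y})|-|V^v(\xi^v)|$ counts only the \emph{new} vertices attached through $y$, and the cluster of $y$ may largely coincide with $C^v$, so the increment is not bounded below by $\I\{y\leftrightarrow V^v\}\,|V^y|$ and FKG cannot rescue the bound $\ge\BP_t(y\leftrightarrow V^v)\,c^y(t)$. Finally, even granting your ODE inequality, the integration as you state it (from $t$ to $t_T$ with $u(t_T^-)=0$) gives $u(t)\ge (t_T-t)/t_T$, i.e.\ the \emph{upper} bound $c^*(t)\le t_T/(t_T-t)$; to get the claimed conclusion you would have to integrate from $0$ using $c^*(0)=1$. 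You yourself flag that you cannot recover the constant $1/t_T$; that missing constant is exactly the content of the step you have not supplied.

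For comparison, the paper does not prove any differential inequality for $c^*$. It introduces the two-point function $T_t(p,q)$ and the associated integral operator $\mathcal{T}_t$ on $L^2(\BQ)$, identifies the operator-norm blow-up threshold $t_o$ with $t_T$ via \cite[Lemma 2.3]{CaicedoDickson24} together with Proposition~\ref{p:equality_t_T}, bounds $\|\mathcal{T}_t\|_{op}\le\|T_t\|_{\infty,1}$ by Schur's test (\cite[Lemma 3.2]{DicksonHeydenreich22}), and invokes \cite[Theorem 2.5]{CaicedoDickson24} for $\|\mathcal{T}_t\|_{op}\ge(t_o-t)^{-1}$; combined with the identity $c^*(t)=1+t\|T_t\|_{\infty,1}$ this yields $c^*(t)\ge t_T/(t_T-t)$, after which \eqref{i:Delta_n} and H\"older finish exactly as in your reductions. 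If you want a self-contained argument along your lines, you would need to prove a continuum analogue of the operator/Simon--Lieb-type bound (or of Aizenman--Newman's $\chi'\le\mathrm{const}\cdot\chi^2$ with the correct constant), not the lower differential inequality you propose.
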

\begin{proof}
To prove this result, we need to recall some of the notation from \cite{CaicedoDickson24}. Define 
\begin{align*}
T_t(p,q)=\int \BP_t\big((0,p)\leftrightarrow(x,q) \text{ in } \xi^{(0,p),(x,q)}\big) \, dx
\end{align*}
and let the integral operator $\mathcal{T}_t$ act as
$(\mathcal{T}_t f)(p)=\int T_t(p,q)f(q)\,\BQ(dq)$, for every
square-integrable function $f$ on $\BM$ (with respect to the
probability measure $\BQ$). Let
$t_o=\inf\{ t>0 : \|\mathcal{T}_t\|_{op}=\infty\}$, where, as usual, $\|\cdot\|_{op}$ refers to the operator norm for a linear operator on a Banach space.

By \cite[Lemma 2.3]{CaicedoDickson24} (which applies without
Assumption D in that paper)  and Proposition
\ref{p:equality_t_T} we have $t_o=t_T$. From \cite[Lemma
3.2]{DicksonHeydenreich22}, we know
$\|\mathcal{T}_t\|_{op}\le \|T_t\|_{\infty,1}$ (this is proven by
Schur’s test). Since $c^{(0,p)}(t)=1+t \int T_t(p,q) \,
\BQ(dq)$ we have $c^*(t)=1+t\|T_t\|_{\infty,1}$. By
\cite[Theorem 2.5]{CaicedoDickson24} (which again does not require 
Assumption D) we have
$\|\mathcal{T}_t\|_{op}\ge (t_0-t)^{-1}$ for $t\in(0,t_0)$. Hence
$\|T_t\|_{\infty,1}\ge (t_T-t)^{-1}$ and
$c^*(t)\ge t_T (t_T-t)^{-1}$ for $t\in(0,t_T)$. For $r=1$, the asserted result now follows from the inequality \eqref{i:Delta_n}. The general case $r\ge 1$ follows from H\"older's inequality.
\end{proof}

\subsection{Strong sharpness of the phase transition}

Analogously to \cite{CaicedoDickson24}, we introduce a continuous and
mark-dependent analogy of the magnetization originally introduced by
Aizenman and Barsky \cite{AizBar87}.  Let $\gamma\in(0,1)$ be a
parameter with which we enrich the marked RCM by adding to each vertex
a uniform $(0,1)$ (Lebesgue) label (independent of everything else),
and let $\BP_{t,\gamma}$ denote the resulting probability measure. A
vertex $x \in\eta$ is called a {\em ghost} vertex if its label is at
most $\gamma$, and we write $x \in \cG$. Similarly, we write
$x \leftrightarrow \cG$ if $x$ is connected to a ghost vertex. We
define magnetization as follows
\begin{align}
\label{eqn:define_magnetization}
    M(t,\gamma,p) := \BP_{t,\gamma}( (0,p) \leftrightarrow \cG \text{ in } \xi^{(0,p)}).
\end{align}
In accordance with our previous notation, we
use the $L^r(\BQ)$-norms for $r\in[1,\infty]$ to define
\begin{equation}
\widebar{M}(t,\gamma):=\|M(t,\gamma,\cdot)\|_1, \quad M^*(t,\gamma) := \|M(t,\gamma,\cdot)\|_\infty.
\end{equation}

Recall the definition of $\xi^{Q_0}$ and $C^{Q_0}$ in Subsection \ref{sub:notationstatRCM}.
We assume that $Q_0$ is also independent of the labels.
Note that
$\widebar{M}(t,\gamma)= \BP_{t,\gamma}(\text{$(0,Q_0) \leftrightarrow \cG$ in $\xi^{Q_0}$})$.  
We will also need the following functions. For
$t\in\R_+$, we define 
\begin{align}
\bar{c}_{f}(t):= \BE_t |C^{Q_0}|\I\{ |C^{Q_0}| < \infty\} = \sum_{n\in\N}n \BP_t(|C^{Q_0}| = n),
\end{align}
and for $\gamma\in(0,1)$ we also define the ``ghost-free'' mean size of the cluster of a typical vertex
\begin{equation}
\bar{c}(t,\gamma) = \BE_{t,\gamma}|C^{Q_0}|\I\{ C^{Q_0} \cap \cG = \emptyset\}.
\end{equation}

It is easy to relate the above function to the mean size of the finite cluster of a typical vertex and 
the percolation probability $\bar\theta(t)=\BP_{t,\gamma}\big(\big|C^{Q_0}\big|=\infty\big)$.

\begin{lemma}\label{l:gamma_to_0}
    Let $t\in\R_+$. Then
    \begin{align*}
        \lim_{\gamma \to 0}\widebar{M}(t,\gamma) = \bar\theta(t), \quad
        \lim_{\gamma \to 0}\bar{c}(t,\gamma) = \bar{c}_{f}(t).
    \end{align*}
\end{lemma}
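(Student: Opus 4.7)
The plan is to condition on the cluster $C^{Q_0}$ (or on $C^{(0,p)}$ in the mark-wise version), exploit the fact that the uniform labels attached to the vertices of $\eta$ are independent of the graph $\xi$, and then pass to the limit $\gamma\downarrow 0$ via monotone or dominated convergence. The underlying observation is that, given $C^{Q_0}$, each vertex of $C^{Q_0}\cap\eta$ is independently a ghost with probability $\gamma$, so the probability that no vertex of $C^{Q_0}\cap\eta$ is a ghost is exactly $(1-\gamma)^{|C^{Q_0}|-1}$ on $\{|C^{Q_0}|<\infty\}$, with the natural convention $(1-\gamma)^{\infty}=0$ on $\{|C^{Q_0}|=\infty\}$.

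For the first limit, this conditioning yields
\begin{align*}
\widebar{M}(t,\gamma)=\BE_t\big[1-(1-\gamma)^{|C^{Q_0}|-1}\big]
=\bar\theta(t)+\BE_t\big[\big(1-(1-\gamma)^{|C^{Q_0}|-1}\big)\I\{|C^{Q_0}|<\infty\}\big].
\end{align*}
On $\{|C^{Q_0}|<\infty\}$, the integrand converges pointwise to $0$ as $\gamma\downarrow 0$ and is bounded by $1$, which is $\BP_t$-integrable, so dominated convergence gives $\widebar{M}(t,\gamma)\to\bar\theta(t)$. (If one prefers the mark-wise identity $\widebar{M}(t,\gamma)=\int M(t,\gamma,p)\,\BQ(dp)$, the same argument applied to each $M(t,\gamma,p)$ combined with another use of dominated convergence in $p$ works equally well.)

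For the second limit, the same conditioning gives
\begin{align*}
\bar{c}(t,\gamma)=\BE_t\big[|C^{Q_0}|\,(1-\gamma)^{|C^{Q_0}|-1}\I\{|C^{Q_0}|<\infty\}\big],
\end{align*}
since on $\{|C^{Q_0}|=\infty\}$ the event $\{C^{Q_0}\cap\cG=\emptyset\}$ has probability zero (infinitely many independent Bernoulli$(1-\gamma)$ trials). As $\gamma\downarrow 0$, the integrand $|C^{Q_0}|(1-\gamma)^{|C^{Q_0}|-1}\I\{|C^{Q_0}|<\infty\}$ increases monotonically to $|C^{Q_0}|\I\{|C^{Q_0}|<\infty\}$, so monotone convergence yields $\bar{c}(t,\gamma)\uparrow \bar{c}_f(t)$, which is the claim.

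The only point that requires a moment of care is the bookkeeping of whether the distinguished vertex $(0,Q_0)$ is itself eligible to be a ghost (it is not, by the restriction of the definition to $x\in\eta$), which only affects the exponent by $\pm 1$ and is irrelevant in the limit. There is no substantive obstacle: once the conditional Bernoulli structure of the ghost labels is exploited, the result is a routine application of dominated convergence for the first limit and of monotone convergence for the second.
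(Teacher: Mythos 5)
Your proof is correct and follows essentially the same route as the paper: condition on the cluster, use the independence of the labels to obtain a geometric factor in the cluster size, and pass to the limit by monotone/dominated convergence. The only (immaterial) difference is a convention: the paper's identity \eqref{e:mag_expression} uses the exponent $|C^{Q_0}|$, effectively labelling the added vertex $(0,Q_0)$ as well, whereas you use $|C^{Q_0}|-1$; as you note, this does not affect either limit.
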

\begin{proof} The proof is direct and follows the same lines as in \cite{CaicedoDickson24}. 
For $\gamma \in(0,1)$ we have 
    \begin{align}
         \widebar{M}(t,\gamma) &= 1 - \BP_{t,\gamma}( (0,Q_0) \nleftrightarrow \cG \text{ in } \xi^{Q_0}) 
         = 1 - \sum_{n \in \N}\BP_{t,\gamma}(C^{Q_0} \cap \cG = \emptyset, |C^{Q_0}| = n) \nonumber \\
         &= 1 - \sum_{n \in \N}(1 - \gamma)^n\BP_t(|C^{Q_0}|=n)= 1 - \BE_t  (1 - \gamma)^{|C^{Q_0}|}.
         \label{e:mag_expression}
    \end{align}
Letting $\gamma \to 0$, we obtain the first result from monotone convergence.
It is also easy to see that the function
$\gamma\mapsto \widebar{M}(t,\gamma)$ is analytic on $(0,1)$ for all $t\in\R_+$.

For $\gamma>0$ we obtain from
our independence assumption that $|C^{Q_0}|<\infty$
a.s.\ on the event $\{C^{Q_0} \cap \cG = \emptyset\}$. 
We then have 
\begin{align}
\bar{c}(t,\gamma)& = \sum_{n\in\N} n \BP_{t,\gamma}(|C^{Q_0}|=n,C^{Q_0} \cap \cG = \emptyset) \nonumber \\ 
    &= \sum_{n \in\N} n (1 - \gamma)^n \BP_t(|C^{Q_0}|=n).
    \label{e:mag_der_gamma}
\end{align}
As $\gamma \to 0$, we obtain the second result from monotone convergence.
\end{proof}

We will also need the following lemma on differential inequalities for the magnetization. 

\begin{lemma}[Aizenman-Barsky differential inequalities on the magnetization] \label{l:diff_ineqs_mag} 
Let $\gamma \in (0,1)$ and $t>0$ and assume that $\| d_\varphi\|_{\infty,1}<\infty$. Then we have
\begin{enumerate}
\item[{\rm (i)}]  $\frac{\partial \widebar{M}(t,\gamma)}{\partial t} 
\leq (1-\gamma) \| d_\varphi\|_{\infty,1} M^*(t,\gamma) \frac{\partial \widebar{M}(t,\gamma)}{\partial\gamma}$.
\item[{\rm (ii)}] $\widebar{M}(t,\gamma) \leq 
\gamma \frac{\partial \widebar{M}(t,\gamma)}{\partial\gamma} 
+ \|M(t,\gamma)\|_2^2 + t M^*(t,\gamma) \frac{\partial \widebar{M}(t,\gamma)}{\partial t}$. 
\end{enumerate}
\end{lemma}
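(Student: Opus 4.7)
The plan is to establish both inequalities by computing the partial derivatives of $\widebar{M}(t,\gamma)$ explicitly on the Poisson process level. For the $\gamma$-derivative, direct differentiation of \eqref{e:mag_expression} (as in \eqref{e:mag_der_gamma}) gives
$$\frac{\partial \widebar{M}(t,\gamma)}{\partial\gamma} = \BE_t |C^{Q_0}|(1-\gamma)^{|C^{Q_0}|-1} = \frac{\bar{c}(t,\gamma)}{1-\gamma}.$$
For the $t$-derivative, the key tool is the Margulis--Russo formula for Poisson intensity that follows from the Mecke equation \eqref{e:Meckev}: for sufficiently regular $F$,
$$\frac{d}{dt}\BE_t F(\eta) = \int \BE_t [F(\eta + \delta_x) - F(\eta)]\, \lambda(dx).$$
Applying this to $F(\eta) = 1 - (1-\gamma)^{|C^{Q_0}|}$, with $Q_0$ and the uniform labels decoupled from $\eta$, shows
$$\partial_t \widebar{M}(t,\gamma) = \int \BP_{t,\gamma}(E_x)\,\lambda(dx),$$
where $E_x$ is the event that the $(0,Q_0)$-cluster is ghost-free in $\xi^{Q_0}$ but contains a ghost in $\xi^{Q_0,x}$.

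For part (i), the idea is to exploit the ``pivotal'' structure of $E_x$. On $E_x$, the added vertex $x$ must lie in $C^{Q_0}(\xi^{Q_0,x})$, and from $x$ a ghost must be reachable in $\xi^{Q_0,x}$ only via newly-added edges. Using the spatial Markov property (Proposition \ref{p2.1}) to re-explore the cluster of $x$ in the region outside $C^{Q_0}(\xi^{Q_0})$, together with the obvious monotonicity and stationarity to translate to a typical vertex of mark equal to that of $x$, the conditional probability that this exploration reaches a ghost is bounded by $M^*(t,\gamma)$. Summing over the potential attachment points $y$ in $C^{Q_0}(\xi^{Q_0})$ via the weight $\varphi(x,y)$ and integrating $x$ against $\lambda$ then reorganises, via another application of Mecke's formula, into a factor of $t^{-1}\bar c(t,\gamma)$, giving
$$\partial_t \widebar{M}(t,\gamma) \le \frac{M^*(t,\gamma)}{t}\,\bar c(t,\gamma),$$
which yields (i) after substituting $\bar c(t,\gamma) = (1-\gamma)\partial_\gamma \widebar{M}(t,\gamma)$.

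For part (ii), the plan is to adapt the Aizenman--Barsky expansion of the magnetization to the continuum Poisson setting. I would partition the event $\{(0,Q_0) \leftrightarrow \cG\}$ according to the structure of the cluster: (a) a ``small cluster'' contribution captured by $\gamma \partial_\gamma \widebar{M}(t,\gamma) = \gamma \bar c(t,\gamma)/(1-\gamma)$, which bounds the probability that $(0,Q_0)$ is connected to \emph{few} ghosts; (b) a BK-type contribution in which two independent connections from $(0,Q_0)$ to ghosts are disentangled by conditioning on the explored cluster of one and then restarting from a second attachment point inside it, which after factorisation yields $\|M(t,\gamma)\|_2^2$; and (c) a pivotal-link contribution, re-summed by the Mecke equation into $t\,M^*(t,\gamma)\,\partial_t \widebar{M}(t,\gamma)$. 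The main obstacle, and the real heart of the AB technique in this setting, is implementing the BK-style ``disjoint occurrence'' decomposition for a continuum Poisson model, which lacks edge-by-edge Bernoulli independence; I would realise this via the conditional independence across the explored cluster boundary provided by Proposition \ref{p2.1}, in the same spirit as the proof of Lemma \ref{l:st_ineq_V_n_mu}, together with careful bookkeeping of the uniform labels carrying the ghost marks.
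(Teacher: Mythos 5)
Your proposal sets out to prove the Aizenman--Barsky differential inequalities from scratch; the paper does not do this at all. Its proof of Lemma \ref{l:diff_ineqs_mag} is a one-line citation: the mark-wise differential inequalities of \cite[Lemma 3.8]{CaicedoDickson24} are combined with the Leibniz rule for differentiating under the $\BQ$-integral to pass from $M(t,\gamma,p)$ to $\widebar{M}(t,\gamma)$. So what you are attempting is a re-derivation of the cited lemma itself, and as such it has genuine gaps at exactly the points where that lemma is hard. In part (i), the $\gamma$-derivative identity and the Margulis--Russo representation $\partial_t \widebar{M}(t,\gamma)=\int \BP_{t,\gamma}(E_x)\,\lambda(dx)$ are fine, but the decisive step is asserted, not proved: if you bound the attachment probability by $\sum_{y\in C}\varphi(x,y)$ and integrate $x$ against $\lambda$, you get $\int\varphi(x,y)\,\lambda(dx)=D_\varphi(y)\le D^*_\varphi$, i.e.\ a bound of the shape $D^*_\varphi\, M^*(t,\gamma)\,\bar c(t,\gamma)$ (plus an additional term coming from the added point itself being a ghost, which is \emph{not} dominated by $M^*(t,\gamma)$ in general), rather than the asserted $\frac{1-\gamma}{t}M^*(t,\gamma)\,\partial_\gamma\widebar{M}(t,\gamma)$. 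The claim that "another application of Mecke's formula" reorganises this into a factor $t^{-1}\bar c(t,\gamma)$ is precisely the nontrivial Aizenman--Barsky manipulation (pivotal-point representation with the prefactor $1/t$, conditioning on the ghost-free cluster with the pivotal point removed, and absorbing the case where that point is itself a ghost into the $(1-\gamma)$ factor); none of this follows from the operations you describe, and the naive execution yields a different, incomparable inequality.

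Part (ii) has the same status: you correctly identify the BK-type disjoint-occurrence decomposition (and the label bookkeeping) as "the main obstacle" and then leave it as a plan, invoking Proposition \ref{p2.1} in the spirit of Lemma \ref{l:st_ineq_V_n_mu}. That decomposition, the precise partition of $\{(0,Q_0)\leftrightarrow\cG\}$ producing the three terms $\gamma\,\partial_\gamma\widebar{M}$, $\|M(t,\gamma)\|_2^2$ and $tM^*\partial_t\widebar{M}$, and the justification of the differentiations, constitute essentially the whole content of \cite[Lemma 3.8]{CaicedoDickson24}; sketching its general shape does not establish it, especially in the marked continuum setting where the factorisation must be done through conditional exploration rather than edge-by-edge independence. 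If your goal is to prove the lemma as stated in this paper, the efficient (and intended) route is to quote the mark-wise inequalities of \cite{CaicedoDickson24} (which, as the paper notes, do not require Assumption D of that reference) and integrate them over $\BQ$, justifying the interchange of derivative and integral; if your goal is a self-contained proof, then the two steps flagged above must actually be carried out, and at present they are missing.
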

\begin{proof}
The second inequality follows from the Leibniz differentiation
    rule and \cite[Lemma 3.8]{CaicedoDickson24}.  
The first inequality (with $1/t$ instead of $\|d_\varphi\|_{\infty,1}$) is asserted in the same lemma.
However, one of the inequalities in the proof does not seem to be correct. Therefore, we present here an alternative argument.
We combine the Margulis--Russo formula in infinite volume from \cite[Theorem 10.8]{ChLast25}
with the Leibniz differentiation rule to obtain from  \eqref{e:mag_expression} that
\begin{align*}
       \frac{\partial \widebar{M}(t,\gamma)}{\partial t}
&=\BE_t \int  ((1 - \gamma)^{|C^{Q_0}|}-(1 - \gamma)^{|C^{Q_0}(\xi^{(0,Q_0),(x,p)})|}) 
\I\{(x,p)\in C^{Q_0}(\xi^{(0,Q_0),(x,p)})\} \, \lambda (d(x,p))\\
&= \BE_t \int  (1 - \gamma)^{|C^{Q_0}|} (1-(1 - \gamma)^{|C^{(x,p)}(\xi^{(0,Q_0),(x,p)}-C^{Q_0})|}) \I\{(x,p)\in C^{Q_0}(\xi^{(0,Q_0),(x,p)})\} \, \lambda (d(x,p)),
\end{align*}
where $\xi^{(0,Q_0),(x,p)}-C^{Q_0}$ is the random graph arising from $\xi^{(0,Q_0),(x,p)}$ by removing all vertices from $C^{Q_0}$ along with emanating edges.
Fix $(x,p)\in\R^d\times\BM$ for the moment. The conditional distribution of 
$\xi^{Q_0,(x,p)}$ given $C^{Q_0}$ is that of a random graph, which can be constructed in two steps as follows. Take first a RCM (with connection function $\varphi$) based on  $V^{(0,Q_0)}(\xi^{(0,Q_0)})+\delta_{(x,p)}+\eta^0$, where 
$\eta^0$ is a Poisson process with intensity measure $K_{t\lambda}(C^{Q_0})$. 
Then remove all edges between points of $V^{(0,Q_0)}(\xi^{(0,Q_0)})$ and add instead the original edges of 
$C^{Q_0}$. This distributional identity can be proved similarly to \cite[Lemma 3.3]{HHLM23}.
In particular, the event  $\{(x,p)\in C^{Q_0}(\xi^{(0,Q_0),(x,p)})\}$ (which means that there is a direct connection between $(x,p)$ and
a vertex from $C^{Q_0}$ in $\xi^{(0,Q_0),(x,p)}$) and the random variable 
$|C^{(x,p)}(\xi^{(0,Q_0),(x,p)}-C^{Q_0})|$ are conditionally independent. 
Therefore,
\begin{align*}
       \frac{\partial \widebar{M}(t,\gamma)}{\partial t}
= \BE_t \int  (1 - \gamma)^{|C^{Q_0}|} \varphi((x,p),C^{Q_0}) 
\BE_t\big[1-(1 - \gamma)^{|C^{(x,p)}(\xi^{(0,Q_0),(x,p)}-C^{Q_0})|}\mid C^{Q_0}\big] \, \lambda (d(x,p)).
\end{align*}
Furthermore, a stochastic monotonicity argument 
(see Proposition \ref{p2.2}) shows that, given $C^{Q_0}$, the random variable
$|C^{Q_0}(\xi^{(0,Q_0),(x,p)}-C^{Q_0})|$ is stochastically dominated by an independent
random variable with the distribution of $|C^{(x,p)}|$.
Therefore, the above can be bounded by
\begin{align*}
\BE_t \int  (1 - \gamma)^{|C^{Q_0}|} \varphi((x,p),C^{Q_0}) M(t,\gamma,p) \, \lambda (d(x,p))
&\le M^*(t,\gamma) \BE_t (1 - \gamma)^{|C^{Q_0}|} \varphi_\lambda (C^{Q_0})\\
&\le M^*(t,\gamma) \| d_\varphi\|_{\infty,1}  \BE_t |C^{Q_0}| (1 - \gamma)^{|C^{Q_0}|},
\end{align*}
where we have used the Bernoulli inequality to bound 
$\varphi_\lambda(C^{Q_0})$. This completes the proof.
\end{proof}

\begin{lemma}\label{l:M_upper_bound}
Let $t\in \R_+$, $n\in\N$ and $\gamma\in (0,1)$. Then
\begin{align}\label{e:M_upper_bound}
M^*(t,\gamma) \leq \Delta_n(t) \widebar{M}(t,\gamma).
\end{align}
\end{lemma}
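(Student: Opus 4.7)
The plan is to mirror the argument used for Proposition \ref{p:equality_t_T}, transposing the mean cluster size $c^*(t)$ into the magnetization. The starting point is the identity $M(t,\gamma,p)=\BE_t[1-(1-\gamma)^{|V^{(0,p)}|}]$, which follows by conditioning as in \eqref{e:mag_expression}. Since the functional $\nu\mapsto 1-(1-\gamma)^{|\nu|}$ is increasing in the counting measure $\nu$ and since the elementary inequality $1-\prod_i a_i\le\sum_i(1-a_i)$ holds for $a_i\in[0,1]$, combining these with the stochastic domination of Corollary \ref{c:clus_dom} yields
\begin{align*}
M(t,\gamma,p)\le \BE_t\bigl[1-(1-\gamma)^{|V^{(0,p)}_{\le n-1}|}\bigr]+\BE_t\int \BE_t\bigl[1-(1-\gamma)^{|V^y(\xi^{[n]}_y)|}\bigm|y\bigr]\,V^{(0,p)}_n(dy).
\end{align*}
I would then estimate the two terms on the right separately.

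For the second term, the graph $\xi^{[n]}_y$ is, by construction, an independent RCM rooted at $y=(x,q)$, so stationarity (cf.\ Remark \ref{r:432}) identifies the inner conditional expectation with $M(t,\gamma,q)$. After integrating out against the density $v^{p,q}_n(t)$ from \eqref{e:vpq} and passing to the $\|\cdot\|_{\infty,\infty}$-norm, this term is at most $\|v_n(t)\|_{\infty,\infty}\widebar{M}(t,\gamma)$, which already exhibits the factor $\widebar{M}(t,\gamma)$ demanded by the claim. For the first term, the elementary bound $1-(1-\gamma)^k\le k\gamma$ together with the definition of $c^*_{\le n-1}(t)$ yields $\BE_t[1-(1-\gamma)^{|V^{(0,p)}_{\le n-1}|}]\le \gamma\, c^{(0,p)}_{\le n-1}(t)\le \gamma\, c^*_{\le n-1}(t)$.

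To upgrade the prefactor $\gamma$ in the first term into $\widebar{M}(t,\gamma)$, I would use the paper's convention that the added vertex $(0,q)$ itself carries an independent ghost label: \eqref{e:mag_expression} together with the trivial bound $|C^{(0,q)}|\ge 1$ then gives $M(t,\gamma,q)\ge 1-(1-\gamma)=\gamma$ for every $q$, and averaging in $q$ produces $\widebar{M}(t,\gamma)\ge \gamma$. Assembling the two bounds yields $M(t,\gamma,p)\le (c^*_{\le n-1}(t)+\|v_n(t)\|_{\infty,\infty})\widebar{M}(t,\gamma)=\Delta_n(t)\widebar{M}(t,\gamma)$, and the lemma follows on taking the essential supremum over $p$.

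The only delicate point, and really more a piece of bookkeeping than a true obstacle, is making sure that Corollary \ref{c:clus_dom} really does compose correctly with the subadditive and monotone functional $1-(1-\gamma)^{|\cdot|}$: because that corollary is a stochastic ordering of counting measures, monotonicity lifts immediately, and the fresh Poisson processes underlying the $\xi^{[n]}_y$ come equipped with their own independent uniform labels, so the product structure producing the subadditive splitting factors cleanly under expectation.
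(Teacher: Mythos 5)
Your proof is correct, and it follows the same overall decomposition as the paper (generations up to $n-1$ handled via Bernoulli's inequality and $\gamma\le\widebar{M}(t,\gamma)$, generation-$n$ contribution paired with the densities $v^{p,q}_n(t)$ and the $\|\cdot\|_{\infty,\infty}$-norm), but the way you justify the key bound on the second term is genuinely different at the technical level. The paper keeps the ghost labels in play: it conditions on the partially labelled graph $D^{(0,p)}_{\le n}$ and invokes the refined spatial Markov property (Theorem 7.1 of \cite{ChLast25}) together with the stochastic monotonicity behind Corollary \ref{c:clus_dom} to dominate the conditional ghost-connection probability of each generation-$n$ vertex $(x,q)$ by $M(t,\gamma,q)$. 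You instead integrate the labels out first, via the pointwise identity $M(t,\gamma,p)=\BE_t\bigl[1-(1-\gamma)^{|C^{(0,p)}|}\bigr]$ (with the convention $(1-\gamma)^\infty=0$), and then apply Corollary \ref{c:clus_dom} directly to the increasing functional $\nu\mapsto 1-(1-\gamma)^{|\nu|}$, splitting with $1-\prod_i a_i\le\sum_i(1-a_i)$ and using the conditional independence and stationarity (Remark \ref{r:432}) of the $\xi^{[n]}_y$ to recover $M(t,\gamma,q)$ for the generation-$n$ points, $\lambda_d\otimes\BQ$-a.e., which suffices since $V^{(0,p)}_n$ has intensity with density $v^{p,q}_n(t)$ by \eqref{e:vpq}. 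This buys a purely unlabelled argument that avoids any appeal to the refined labelled Markov property; in particular, your closing remark about the $\xi^{[n]}_y$ carrying fresh labels is unnecessary (and is the only slightly misleading sentence), since at that stage the labels have already been averaged out. All the remaining steps ($1-(1-\gamma)^k\le k\gamma$, $\gamma\le\widebar{M}(t,\gamma)$ because $|C^{Q_0}|\ge1$, and taking the essential supremum over $p$) match the paper.
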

\begin{proof}If $\Delta_n(t)=\infty$ or $\|d_\varphi\|_{\infty,1}=0$,
then inequality \eqref{e:M_upper_bound} is trivial. Therefore, we can assume that
  $\Delta_n(t)<\infty$ and $\|d_\varphi\|_{\infty,1}>0$.  
Fix $p\in\BM$. We have
    \begin{align*}
      M(t,\gamma,p)&=\BP_{t,\gamma}( (0,p) \leftrightarrow \cG \text{ in } \xi^{(0,p)}) 
= \BP_{t,\gamma}\bigg( \bigcup_{k\ge 0} \{ C_k^{(0,p)} \cap \cG \ne \emptyset\}\bigg) \\
      &\le  \BP_{t,\gamma}\left(  C_{\le n-1}^{(0,p)} \cap \cG \ne \emptyset\right) + \BP_{t,\gamma}\left( C_{\ge n}^{(0,p)} \cap \cG \ne \emptyset\right) \\
      &= 1- \BE_t (1-\gamma)^{|C_{\le n-1}^{(0,p)}|} + \BP_{t,\gamma}\left( C_{\ge n}^{(0,p)} \cap \cG \ne \emptyset\right) \\
      &\le  \gamma c_{\le n-1}^{(0,p)}(t) + \BE_{t,\gamma} \int \I\big\{ (x,q) \leftrightarrow \cG \text{ in } C_{\ge n}^{(0,p)}\big\} \, C^{(0,p)}_{n} (d (x,q)), 
    \end{align*}
where we have used the Bernoulli inequality for the last line. 
To treat the above second term $I_2$, say, we let $D^{(0,p)}_{\le n}$ denote the graph 
$C^{(0,p)}_{\le n}$, where all points (vertices) are marked by their labels, except those of 
$C^{(0,p)}_n$. Then
\begin{align*}
I_2=\BE_{t,\gamma} \int \BP \big((x,q) \leftrightarrow \cG \text{ in } C_{\ge n}^{(0,p)}\bigm\vert D^{(0,p)}_{\le n}\big) \, C^{(0,p)}_{n} (d (x,q)).
\end{align*}
By the spatial Markov property (in fact, we need the more refined \cite[Theorem 7.1]{ChLast25})
and stochastic monotonicity (as in Corollary \ref{c:clus_dom}) the conditional probability occurring above can be
bounded by $M(t,\gamma,p)$. Therefore,
\begin{align*}
I_2\le \BE_t \int M(t,\gamma,q) \, C^{(0,p)}_{n} (d (x,q))
=\int M(t,\gamma,q)v^{p,q}_n(t)\,\BQ(dq)\le \widebar{M}(t,\gamma) \| v_n(t)\|_{\infty,\infty},
\end{align*}
where the equality comes directly from the definition of the densities $v^{p,q}_n(t)$. 
Since $\gamma\le \widebar{M}(t,\gamma)$ we obtain the assertion.
\end{proof}

\begin{lemma}\label{l:M_lower_bound}
Let $t\in \R_+$, $n\in\N$, $\gamma\in (0,1)$ and assume  that $\bar{c}_{f}(t) = \infty$. Then
\begin{align}\label{e:M_lower_bound}
\widebar{M}(t,\gamma) \geq \sqrt{\frac{\gamma}{\Delta_n(t) + t \|d_\varphi\|_{\infty,1}\Delta^2_n(t)}}\geq \sqrt{\frac{\gamma}{1+ t \|d_\varphi\|_{\infty,1}}} \frac{1}{\Delta_n(t)}. 
\end{align}
\end{lemma}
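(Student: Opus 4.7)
The plan is to combine the two Aizenman--Barsky style differential inequalities from Lemma \ref{l:diff_ineqs_mag} with the pointwise bound of Lemma \ref{l:M_upper_bound}. Writing $F(\gamma):=\widebar{M}(t,\gamma)$ for fixed $t$, I would first reduce to a single differential inequality in $F$. Using $\|M(t,\gamma)\|_2^2\le M^*\cdot\widebar{M}\le\Delta_nF^2$ in inequality (ii), and substituting the bound on $t\,\partial_t F$ from (i) together with $M^*\le\Delta_nF$, one obtains
\begin{align*}
F\,(1-\Delta_nF)\le(\gamma+\Delta_n^2F^2)\,F_\gamma.
\end{align*}

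Next I would perform a case analysis on the size of $\Delta_nF$. In the easy case $\Delta_nF(\gamma)\ge1$ we have $F\ge1/\Delta_n$, and since $\gamma<1\le1+1/\Delta_n$ a direct algebraic check yields $1/\Delta_n\ge\sqrt{\gamma/(\Delta_n+\Delta_n^2)}$. So we may assume $\Delta_nF<1$ and rewrite the inequality as $F_\gamma\ge F(1-\Delta_nF)/(\gamma+\Delta_n^2F^2)$. The hypothesis $\bar c_f(t)=\infty$ enters here: by the identity $F_\gamma(t,\gamma)=\bar c(t,\gamma)/(1-\gamma)$ combined with Lemma \ref{l:gamma_to_0}, it forces $F_\gamma(t,\gamma)\to\infty$ as $\gamma\to0^+$. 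This ``infinite initial velocity'' is the crucial extra ingredient.

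To conclude I would argue by contradiction: suppose $F(\gamma_\ast)^2<\gamma_\ast/(\Delta_n+\Delta_n^2)$ at some $\gamma_\ast\in(0,1)$, and let $\gamma_0\in[0,\gamma_\ast)$ be the largest point at which $F(\gamma_0)^2\ge\gamma_0/(\Delta_n+\Delta_n^2)$; the blow-up of $F_\gamma$ at $0^+$ guarantees $\gamma_0>0$, and by continuity $F(\gamma_0)^2=\gamma_0/(\Delta_n+\Delta_n^2)$. On $(\gamma_0,\gamma_\ast]$ the pointwise bound $F^2<\gamma/(\Delta_n(1+\Delta_n))$ can be inserted into the denominator of the differential inequality. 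When $\Delta_nF\le1/2$ throughout this interval, the resulting bound $(\log F)_\gamma\ge1/(2\gamma)$ integrates to $F(\gamma_\ast)/\sqrt{\gamma_\ast}>F(\gamma_0)/\sqrt{\gamma_0}=1/\sqrt{\Delta_n+\Delta_n^2}$, contradicting the standing assumption. For the complementary range where $\Delta_nF$ is moderate, I would retain the full $\Delta_n^2F^2$ term in the denominator and exploit the monotonicity of $F$ in $\gamma$ to derive the same contradiction from a slightly refined logarithmic-derivative estimate.

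The main obstacle is stitching these sub-cases together uniformly in $\gamma\in(0,1)$: the candidate curve $\sqrt{\gamma/(\Delta_n+\Delta_n^2)}$ is only a marginal subsolution of the associated ODE near the crossover $\gamma=1/(4\Delta_n(1+\Delta_n))$, so the initial blow-up of $F_\gamma$ from $\bar c_f(t)=\infty$ must be used in an essential way rather than relying on purely local (pointwise-in-$\gamma$) comparisons. The second, weaker inequality in the statement is then immediate: since $\Delta_n(t)\ge c^*_0(t)=1$ one has $\Delta_n+\Delta_n^2\le2\Delta_n^2$, whence $\sqrt{\gamma/(\Delta_n+\Delta_n^2)}\ge\sqrt{\gamma/2}/\Delta_n$.
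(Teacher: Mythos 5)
Your derivation of the combined inequality $F(1-\Delta_n F)\le(\gamma+\Delta_n^2F^2)F_\gamma$ is correct and is exactly the paper's first step (Lemma \ref{l:diff_ineqs_mag} together with $\|M(t,\gamma)\|_2^2\le M^*(t,\gamma)\widebar{M}(t,\gamma)$ and Lemma \ref{l:M_upper_bound}), and the identity $F_\gamma=\bar c(t,\gamma)/(1-\gamma)$ is also right. The gap is in everything after that: the passage from the differential inequality to the bound $F(\gamma)\ge\sqrt{\gamma/(\Delta_n+\Delta_n^2)}$ on all of $(0,1)$ is precisely the heart of the lemma, and your comparison/contradiction scheme does not close it. First, the assertion that the blow-up of $F_\gamma$ at $0^+$ ``guarantees $\gamma_0>0$'' is unjustified: $F_\gamma\to\infty$ only yields $F(\gamma)/\gamma\to\infty$, which is compatible with $F(\gamma)=o(\sqrt{\gamma})$ near $0$, so the set $\{\gamma\in(0,\gamma_*):F(\gamma)^2\ge\gamma/(\Delta_n+\Delta_n^2)\}$ could a priori be empty and your last crossing point need not exist; ruling this scenario out requires using $\bar c_f(t)=\infty$ in an integrated way, not merely as ``infinite initial velocity''. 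Second, as you concede yourself, for $\gamma$ beyond the crossover $\approx 1/(4\Delta_n(1+\Delta_n))$ --- which is at most $1/8$ because $\Delta_n(t)\ge 1$, so this covers most of $(0,1)$ --- the curve $\sqrt{\gamma/(\Delta_n+\Delta_n^2)}$ is not a subsolution of the associated ODE (the equality solution started on the curve there drops below it), so no pointwise-in-$\gamma$ comparison with this curve can succeed; the promised ``slightly refined logarithmic-derivative estimate'' and appeal to monotonicity are never carried out and would have to supply exactly the missing global input.

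The paper avoids comparisons altogether. It fixes $t$, passes to the inverse $M^{-1}$ of the strictly increasing analytic map $\gamma\mapsto\widebar{M}(t,\gamma)$, multiplies the differential inequality by $x^{-2}(M^{-1})'(x)$ so that the left-hand side becomes the exact derivative $\frac{d}{dx}\big(x^{-1}M^{-1}(x)\big)$, and integrates over $[0,y]$ using the boundary value $\lim_{x\to 0}M^{-1}(x)/x=1/\bar c_f(t)=0$ --- this is the only place where $\bar c_f(t)=\infty$ enters. This yields $\gamma/\widebar{M}(t,\gamma)\le\Delta_n(t)\gamma+\Delta_n(t)^2\widebar{M}(t,\gamma)$ for every $\gamma$, and the elementary bound $\gamma\le\widebar{M}(t,\gamma)$ (which follows from $|C^{Q_0}|\ge 1$ and which your argument never uses) then gives the first inequality in \eqref{e:M_lower_bound}; $\Delta_n(t)\ge 1$ gives the second. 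If you want to salvage your write-up, replace the case analysis by this exact-derivative integration; the local ODE comparison cannot be patched to give the stated constant.
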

\begin{proof}  We proceed similarly to the proof of \cite[Corollary 3.11]{CaicedoDickson24}.
If $\Delta_n(t)=\infty$, then the inequality \eqref{e:M_lower_bound} is trivial. Assume that
  $\Delta_n(t)<\infty$ and $\|d_\varphi\|_{\infty,1}>0$. Then we can use the preceding lemmas. Inserting the first inequality of
Lemma~\ref{l:diff_ineqs_mag} into the second, and using the simple
observation $\|M(t,\gamma)\|_2^2\le M^*(t,\gamma) \widebar{M}(t,\gamma)$
we obtain
\begin{align*}
\widebar{M}(t,\gamma) \leq \gamma \frac{\partial \widebar{M}(t,\gamma)}{\partial\gamma} 
+ M^*(t,\gamma) \widebar{M}(t,\gamma) + t\|d_\varphi\|_{\infty,1}(1-\gamma) (M^*(t,\gamma))^2 \frac{\partial \widebar{M}(t,\gamma)}{\partial\gamma}.
\end{align*}
Define $\widetilde\Delta_n(t):=t\|d_\varphi\|_{\infty,1} \Delta^2_n(t)$. Then by Lemma~\ref{l:M_upper_bound} we get 
\begin{align}\label{e:988}
\widebar{M}(t,\gamma) \leq \gamma \frac{\partial \widebar{M}(t,\gamma)}{\partial\gamma} 
+ \Delta_n(t) \widebar{M}(t,\gamma)^2 
+ (1-\gamma) \widetilde\Delta_n(t) \widebar{M}(t,\gamma)^2 \frac{\partial \widebar{M}(t,\gamma)}{\partial\gamma}.
    \end{align}
In the remainder of the proof, we fix $t$ and drop this argument from our notation.
By \eqref{e:mag_expression} the function $M$ (given by $\gamma\mapsto M(\gamma):= \widebar{M}(t,\gamma)$)
is strictly increasing and therefore has a differentiable inverse $M^{-1}$.
Then we can rewrite \eqref{e:988} for all $x$ in the range of $M$ as
\begin{align}\label{e:989}
x \leq \frac{M^{-1}(x)}{(M^{-1})'(x)} + \Delta_nx^2 
+ (1-M^{-1}(x)) \frac{\widetilde\Delta_nx^2}{(M^{-1})'(x)}.
    \end{align}
We have $ \frac{d}{dx}(x^{-1}M^{-1}(x)) = x^{-1}(M^{-1})'(x) - x^{-2}M^{-1}(x)$.
Multiplying \eqref{e:989} by $x^{-2}(M^{-1})'(x)$ we therefore obtain
\begin{align*}
  \frac{d}{dx}\big(x^{-1}M^{-1}(x)\big) 
\le \Delta_n (M^{-1})'(x)+(1-M^{-1}(x)) \widetilde\Delta_n
\le \Delta_n (M^{-1})'(x)+\widetilde\Delta_n.
\end{align*}
We wish to integrate this inequality on $[0,y]$ for some $y$ in the range of $M$.
To do so, we note that $M^ {-1}(0)=0$ and 
\begin{align*}
\lim_{x\to 0}\frac{M^{-1}(x)}{x}=(M^{-1})'(0)=\frac{1}{M'(0)}=\frac{1}{\bar{c}_f(t)}=0,
\end{align*}
where the penultimate identity follows from \eqref{e:mag_expression} and the final one
from our assumption $c_f(t)=\infty$. Therefore, we obtain
\begin{align*}
y^{-1}M^{-1}(y) \le \Delta_n M^{-1}(y)+\widetilde\Delta_ny,
\end{align*}
that is $\gamma/\widebar{M}(t,\gamma)\le \Delta_n(t) \gamma + \widetilde\Delta_n \widebar{M}(t,\gamma)$
for each $\gamma>0$. Since $\gamma\le \widebar{M}(t,\gamma)$, the first
inequality in \eqref{e:M_lower_bound} follows.
The second is then a consequence of $\Delta_n(t)\ge 1$, which is true by definition.
 \end{proof}

\begin{proposition}\label{p:per_lower_bound}
Let $t>t_T^{(1)}$ and $n\in\N$. If $\bar\theta(t_T^{(1)})=0$, then 
\begin{align}\label{e:per_lower_bound}
\bar\theta(t) \geq \frac{t-t_T^{(1)}}{2t \Delta_n(t)}.
\end{align}
\end{proposition}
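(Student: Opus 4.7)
The plan is to adapt the Aizenman-Barsky mean-field argument on the magnetization $\widebar{M}(t,\gamma)$, in the spirit of \cite{CaicedoDickson24}, combining the differential inequalities of Lemma \ref{l:diff_ineqs_mag} with the pointwise bounds in Lemmas \ref{l:M_upper_bound} and \ref{l:M_lower_bound}. First I would dispose of the easy case: if $\bar\theta(t) > 1/(2\Delta_n(t))$, then since $(t - t_T^{(1)})/(2t\Delta_n(t)) \leq 1/(2\Delta_n(t))$ the claim is immediate. Hence assume $\bar\theta(t) \leq 1/(2\Delta_n(t))$; monotonicity of $\widebar{M}$ in both arguments and of $s \mapsto s\Delta_n(s)$ then yields $\widebar{M}(s,\gamma) \leq 1/(2\Delta_n(s))$ for all $s \in [t_T^{(1)}, t]$ and all sufficiently small $\gamma > 0$.

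Combining Lemma \ref{l:diff_ineqs_mag}(ii), Lemma \ref{l:M_upper_bound}, and the trivial bound $\|M\|_2^2 \leq M^*\widebar{M}$ gives $\widebar{M}(1 - \Delta_n\widebar{M}) \leq \gamma\,\partial_\gamma\widebar{M} + s\Delta_n(s)\widebar{M}\,\partial_s\widebar{M}$. Under the reduction, $1 - \Delta_n(s)\widebar{M}(s,\gamma) \geq 1/2$, so dividing by $s\Delta_n(s)\widebar{M}(s,\gamma) > 0$, using $s\Delta_n(s) \leq t\Delta_n(t)$, and integrating $s$ over $[t_T^{(1)}, t]$ produces
\[
\frac{t - t_T^{(1)}}{2t\Delta_n(t)} \leq R(\gamma) + \widebar{M}(t,\gamma),\qquad R(\gamma) := \int_{t_T^{(1)}}^t \frac{\gamma\,\partial_\gamma \widebar{M}(s,\gamma)}{s\Delta_n(s)\widebar{M}(s,\gamma)}\, ds.
\]
The conclusion follows on passing $\gamma \to 0$ along a subsequence with $R(\gamma) \to 0$, using Lemma \ref{l:gamma_to_0} to send $\widebar{M}(t,\gamma) \to \bar\theta(t)$.

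The main obstacle is the subsequence extraction $R(\gamma_k) \to 0$. The hypothesis $\bar\theta(t_T^{(1)}) = 0$ forces $\bar{c}_f(t_T^{(1)}) = \bar{c}(t_T^{(1)}) = \infty$, so Lemma \ref{l:M_lower_bound} at $t_T^{(1)}$ gives $\widebar{M}(s,\gamma) \geq \widebar{M}(t_T^{(1)},\gamma) \geq \sqrt{\gamma/2}/\Delta_n(t_T^{(1)})$ for $s \geq t_T^{(1)}$, controlling the denominator in $R(\gamma)$. Rewriting the integrand as $\gamma\,\partial_\gamma \log\widebar{M}(s,\gamma)/(s\Delta_n(s))$, Fubini and the fundamental theorem of calculus (using $\widebar{M}(s,1) = 1$) yield $\int_0^1 R(\gamma)\gamma^{-1}\, d\gamma = \int_{t_T^{(1)}}^t (-\log\bar\theta(s))/(s\Delta_n(s))\, ds$, so finiteness of this latter integral implies $\liminf_{\gamma\to 0} R(\gamma) = 0$ via the standard argument that a nonnegative function $f$ on $(0,1)$ with $\int_0^1 f(\gamma)/\gamma\, d\gamma < \infty$ satisfies $\liminf_{\gamma\to 0} f(\gamma) = 0$. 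The apparent circularity — finiteness requires $\bar\theta > 0$ on $(t_T^{(1)}, t]$, which is part of what we want to prove — is broken by a bootstrapping argument, first establishing $\bar\theta > 0$ on a suitable subinterval and extending from there by monotonicity.
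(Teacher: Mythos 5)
Your reduction and the manipulation of the differential inequality are sound up to the bound
\begin{align*}
\frac{t-t_T^{(1)}}{2t\,\Delta_n(t)}\;\le\; R(\gamma)+\widebar{M}(t,\gamma),
\end{align*}
and indeed you use the same ingredients as the paper (Lemma \ref{l:diff_ineqs_mag}(ii), Lemma \ref{l:M_upper_bound}, $\|M(t,\gamma)\|_2^2\le M^*(t,\gamma)\widebar{M}(t,\gamma)$, plus the fact that $\bar\theta(t_T^{(1)})=0$ forces $\bar{c}_f(t_T^{(1)})=\infty$ via Proposition \ref{p:susc_mean_field_bound}, so that Lemma \ref{l:M_lower_bound} is available). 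The genuine gap is the decisive last step, $\liminf_{\gamma\to 0}R(\gamma)=0$. Your identity $\int_0^1 R(\gamma)\gamma^{-1}\,d\gamma=\int_{t_T^{(1)}}^{t}(-\log\bar\theta(s))/(s\Delta_n(s))\,ds$ helps only if the right-hand side is finite, which requires both $\bar\theta(s)>0$ for all $s\in(t_T^{(1)},t)$ and integrability of $-\log\bar\theta(s)$ as $s\downarrow t_T^{(1)}$, i.e.\ a quantitative (power-law type) lower bound on $\bar\theta$ near $t_T^{(1)}$. The first requirement is exactly the sharpness $t_c=t_T$, which in the paper is \emph{deduced from} this proposition (via Theorem \ref{t:per_mean_field_bound} and Corollary \ref{c:sharpness_of_ph_tran}); the second is essentially the inequality \eqref{e:per_lower_bound} you are trying to prove. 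Monotonicity cannot break this circle: it propagates positivity of $\bar\theta$ upward only from a point where positivity is already known, so a ``bootstrap'' would need positivity at intensities arbitrarily close to $t_T^{(1)}$ — the statement at issue — and even then the log-integrability near $t_T^{(1)}$ would remain unproved. Your other use of Lemma \ref{l:M_lower_bound} (``controlling the denominator of $R$'') does not rescue the step: since $\gamma\,\partial_\gamma\widebar{M}(s,\gamma)$ is only $O(1)$, the bound $\widebar{M}(s,\gamma)\ge\sqrt{\gamma/2}/\Delta_n(t_T^{(1)})$ gives at best $R(\gamma)=O\big((t-t_T^{(1)})\gamma^{-1/2}\big)$, which blows up as $\gamma\to 0$.

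The missing idea — and the point where the paper's proof genuinely differs — is to keep the $\gamma$-integration explicit instead of sending $\gamma\to0$ at fixed $s$. The paper multiplies Lemma \ref{l:diff_ineqs_mag}(ii) by $\gamma^{-1}\widebar{M}(t,\gamma)^{-1}$, arrives at $\gamma^{-1}\le\partial_\gamma\log\widebar{M}(t,\gamma)+\Delta_n(t')\gamma^{-1}\partial_t\big(t\widebar{M}(t,\gamma)\big)$, integrates over the rectangle $[t_T,t']\times[\gamma_1,\gamma_2]$ and divides by $\log(\gamma_2/\gamma_1)$. The only dangerous boundary term is then $-\log\widebar{M}(t_T,\gamma_1)/\big(\log\gamma_2-\log\gamma_1\big)$, and this is precisely where the $\sqrt{\gamma}$ lower bound of Lemma \ref{l:M_lower_bound} earns its keep: it shows this ratio tends to $\tfrac12$ as $\gamma_1\to0$, which can be absorbed, after which $\gamma_2\to0$ and Lemma \ref{l:gamma_to_0} yield \eqref{e:per_lower_bound} without ever invoking $\bar\theta(s)$ at intermediate intensities. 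Unless you can substantiate your bootstrap (and I do not see how without reproducing this normalization-by-$\log(\gamma_2/\gamma_1)$ device), your argument is incomplete at its crucial point.
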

\begin{proof}
  Note that if $\Delta_n(t)=\infty$, then the inequality
  \eqref{e:per_lower_bound} is trivial.  Suppose that
  $\Delta_n(t)<\infty$ and $\|d_\varphi\|_{\infty,1}>0$.  Then we have
by Proposition \ref{p:equality_t_T} that $t_T=t_T^{(1)}=t_T^{(\infty)}$. Let
  $t'>t_T$ and $t\in(t_T,t']$. Multiplying the second inequality of
  Lemma~\ref{l:diff_ineqs_mag} by
  $\gamma^{-1} \widebar{M}(t,\gamma)^{-1}$  and using simple observation
  $\|M(t,\gamma)\|_2^2\le M^*(t,\gamma) \widebar{M}(t,\gamma)$ gives
     \begin{align*}
       \frac{1}{\gamma} \leq \frac{1}{\widebar{M}(t,\gamma)} \frac{\partial \widebar{M}(t,\gamma)}{\partial\gamma} 
+ \frac{M^*(t,\gamma)}{\gamma} 
+ \frac{t M^*(t,\gamma)}{\gamma \widebar{M}(t,\gamma)} \frac{\partial \widebar{M}(t,\gamma)}{\partial t}.
    \end{align*}
    By Lemma~\ref{l:M_upper_bound} we get
     \begin{align*}
       \frac{1}{\gamma} &\leq  \frac{1}{\widebar{M}(t,\gamma)} \frac{\partial \widebar{M}(t,\gamma)}{\partial\gamma} 
+ \frac{\Delta_n(t)\widebar{M}(t,\gamma)}{\gamma} + \frac{t \Delta_n(t)}{\gamma } \frac{\partial \widebar{M}(t,\gamma)}{\partial t} \\
       &\leq  \frac{1}{\widebar{M}(t,\gamma)} \frac{\partial \widebar{M}(t,\gamma)}{\partial\gamma} 
+ \frac{\Delta_n(t')}{\gamma} \left(\widebar{M}(t,\gamma) + t \frac{\partial \widebar{M}(t,\gamma)}{\partial t}\right)\\
       &=  \frac{\partial \log (\widebar{M}(t,\gamma))}{\partial\gamma} 
+ \frac{\Delta_n(t')}{\gamma} \frac{\partial}{\partial t} \left( t \widebar{M}(t,\gamma) \right).
    \end{align*}
    We now integrate the above inequality over
    $(t,\gamma)\in [t_T,t']\times[\gamma_1,\gamma_2]$, where
    $0<\gamma_1\leq \gamma_2< 1$. Since all the integrands are non-negative, we can use Fubini's theorem to exchange the order of
    the integrals, and we will also use the properties of the function
    $\widebar{M}(t,\gamma)$, i.e.\ non-negativity and increasing in $t$ and
    $\gamma$. Therefore
\begin{align*}
       (t'-t_T) \log(\gamma_2/\gamma_1) 
&\leq\int_{t_T}^{t'} \log (\widebar{M}(t,\gamma_2)/\widebar{M}(t,\gamma_1))\, d t + \Delta_n(t') \int_{\gamma_1}^{\gamma_2}\frac{1}{\gamma} 
\left( t' \widebar{M}(t',\gamma) - t_T \widebar{M}(t_T,\gamma) \right) \, d \gamma \\
&\leq  (t'-t_T)\log (\widebar{M}(t',\gamma_2)/\widebar{M}(t_T,\gamma_1))+ \Delta_n(t')t' \widebar{M}(t',\gamma_2) \log(\gamma_2/\gamma_1).
\end{align*}
Dividing by $\log(\gamma_2/\gamma_1)$ we get   
\begin{align}\label{e:078}
       t'-t_T\leq (t'-t_T)\left(\frac{\log (\widebar{M}(t',\gamma_2))}{\log(\gamma_2)-\log(\gamma_1)} 
- \frac{\log(\widebar{M}(t_T,\gamma_1))}{\log(\gamma_2)-\log(\gamma_1)} \right)+ \Delta_n(t')t' \widebar{M}(t',\gamma_2).
\end{align}
Since $\bar\theta(t_T)=0$, the cluster of a typical vertex is finite
almost surely. Therefore $\bar{c}_f(t_T)=\BE_{t_T}|C^{Q_0}|$. 
Moreover, letting $t\uparrow t_T$ in \eqref{e:susc_mean_field_bound}, 
we obtain $\bar{c}(t_T)=\infty$. 
By Lemma \ref{l:M_lower_bound} we have for $0<\gamma_1<\gamma_2$ that
\begin{align*}
     - \frac{\log(\widebar{M}(t_T,\gamma_1))}{\log(\gamma_2)-\log(\gamma_1)} 
\le \frac{\log(\sqrt{1+t_T\|d_\varphi\|_{\infty,1}}\Delta_n(t_T))-\frac12\log (\gamma_1)}{\log(\gamma_2)-\log(\gamma_1)}.
\end{align*}
As $\gamma_1\to 0$, the above right-hand side tends to $\frac12$.
Hence it follows from \eqref{e:078} that
\begin{align*}
    \widebar{M}(t',\gamma_2)\geq \frac{t'-t_T}{2 t' \Delta_n(t')}. 
\end{align*}
Letting $\gamma_2\to 0$ and using Lemma \ref{l:gamma_to_0} concludes the proof.
\end{proof}

\begin{theorem}[Percolation mean-field bound]\label{t:per_mean_field_bound}
Let $\delta>0$. Suppose that $n\in\N$ satisfies \eqref{c:main}. Then 
\begin{align}\label{e:per_mean_field_bound}
    \|\theta(t)\|_r \geq \left(\frac{\bar\theta(t_T)}{\delta}
+\frac{\I\{\bar\theta(t_T)=0\}}{2t\Delta_n(t)}\right)(t-t_T),
\end{align}
for all $t\in[t_T, t_T + \delta]$ and $r\in[1,\infty]$.
\end{theorem}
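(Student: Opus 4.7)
The plan is to show that the bound reduces, via a simple case analysis, to results already established in the excerpt, together with two elementary monotonicity facts: monotonicity of $L^r(\BQ)$-norms on a probability space (via Jensen) and monotonicity of $\theta^p(t)$ in $t$ (via the coupling argument underlying Proposition \ref{p2.2}). The key observation is that condition \eqref{c:main} activates Proposition \ref{p:equality_t_T}, so that $t_T = t_T^{(1)}$; this makes Proposition \ref{p:per_lower_bound} directly applicable with $t_T$ in place of $t_T^{(1)}$.

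First I would record the two general bounds. Since $\BQ$ is a probability measure, Jensen's inequality yields
\begin{align*}
\|\theta(t)\|_r \ge \|\theta(t)\|_1 = \bar\theta(t), \quad r \in [1,\infty].
\end{align*}
Moreover, a standard coupling (thin $\eta_{t'\lambda}$ to $\eta_{t\lambda}$ for $t \le t'$, as in Proposition \ref{p2.2}) shows that $t \mapsto \theta^p(t)$ is non-decreasing for every $p \in \BM$, hence $\bar\theta$ is non-decreasing on $\R_+$.

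Then I would split according to whether $\bar\theta(t_T)$ is positive or zero. If $\bar\theta(t_T) > 0$, the indicator term in \eqref{e:per_mean_field_bound} vanishes, and it remains to prove $\|\theta(t)\|_r \ge \bar\theta(t_T)(t-t_T)/\delta$. This follows immediately, since for $t \in [t_T, t_T + \delta]$ monotonicity gives $\|\theta(t)\|_r \ge \bar\theta(t) \ge \bar\theta(t_T)$, and $(t-t_T)/\delta \le 1$. If instead $\bar\theta(t_T) = 0$, the first term vanishes and it remains to prove
\begin{align*}
\|\theta(t)\|_r \ge \frac{t-t_T}{2t\,\Delta_n(t)}.
\end{align*}
Here I invoke Proposition \ref{p:per_lower_bound} (with $t_T^{(1)} = t_T$ thanks to Proposition \ref{p:equality_t_T}) to obtain $\bar\theta(t) \ge (t-t_T)/(2t\,\Delta_n(t))$, and conclude via $\|\theta(t)\|_r \ge \bar\theta(t)$.

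There is no real obstacle: the theorem is essentially a packaging of Proposition \ref{p:per_lower_bound}, the equivalence $t_T = t_T^{(1)}$ from Proposition \ref{p:equality_t_T}, and elementary monotonicity. The only subtle point worth stating carefully is the appearance of $\delta$: the factor $1/\delta$ is necessary precisely to make the supercritical piece $\bar\theta(t_T)(t-t_T)/\delta$ valid uniformly on the whole interval $[t_T, t_T+\delta]$, rather than only in a small neighborhood of $t_T$, which is why the bound on $(t-t_T)/\delta$ by $1$ is used at exactly this step.
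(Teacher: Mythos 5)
Your proposal is correct and follows essentially the same route as the paper: the paper's proof likewise reduces to $\|\theta(t)\|_r\ge\bar\theta(t)$ (via H\"older/Jensen), handles the case $\bar\theta(t_T)>0$ by monotonicity of $\bar\theta$, and otherwise invokes Proposition \ref{p:per_lower_bound} (whose proof already contains the identification $t_T=t_T^{(1)}$ via Proposition \ref{p:equality_t_T}). Your explicit remarks on the role of $\delta$ and on the trivial endpoint $t=t_T$ are accurate elaborations of the same argument.
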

\begin{proof}
By H\"older's inequality, we get $\|\theta(t)\|_r\ge \|\theta(t)\|_1\equiv\bar\theta(t)$ for all $r\in\left[1,\infty\right]$.
If $\bar\theta(t_T)>0$, then the result follows from the monotonicity of $\bar\theta(t)$. 
Otherwise, the result follows from Proposition \ref{p:per_lower_bound}. 
\end{proof}

\begin{corollary}[Sharpness of phase transition]\label{c:sharpness_of_ph_tran}
Under the assumptions of Proposition \ref{p:equality_t_T}, we have $t_c = t_T$.
\end{corollary}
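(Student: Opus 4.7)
The plan is to deduce $t_c = t_T$ by combining the two preceding propositions, since the inequality $t_T \le t_c$ is already built into the general setup of Subsection \ref{sub:notationstatRCM}. Thus the whole task reduces to showing $t_c \le t_T$.

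First, I would invoke Proposition \ref{p:equality_t_T} to rewrite $t_T = t_T^{(1)}$, which is the form of the critical intensity appearing in Proposition \ref{p:per_lower_bound}. Next, I would split on whether $\bar\theta(t_T) > 0$ or $\bar\theta(t_T) = 0$. In the first case, by the monotonicity of $\bar\theta$ we have $\bar\theta(t) > 0$ for every $t \ge t_T$, and the definition of $t_c = t_c^{(1)}$ immediately gives $t_c \le t_T$. In the second case, Proposition \ref{p:per_lower_bound} delivers
\begin{align*}
\bar\theta(t) \ge \frac{t - t_T}{2t\,\Delta_n(t)}, \qquad t > t_T,
\end{align*}
and under hypothesis \eqref{c:main} Lemma \ref{l:Delta_n} (together with the explicit estimate \eqref{e:Delta_2n}) ensures $\Delta_n(t) < \infty$ for every $t \ge 0$. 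Therefore $\bar\theta(t) > 0$ for all $t > t_T$, and passing to the infimum of such $t$ yields $t_c \le t_T$.

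The argument is a clean assembly of the preceding work, and the only real point to watch is the matching of critical intensities: Proposition \ref{p:per_lower_bound} is formulated in terms of $t_T^{(1)}$, whereas the corollary is stated for $t_T = t_T^{(\infty)}$, so the appeal to Proposition \ref{p:equality_t_T} is indispensable. Beyond this small piece of bookkeeping, no genuine obstacle remains, since the heavy lifting — the Aizenman–Barsky style differential inequalities and the mean-field bounds — has already been carried out upstream.
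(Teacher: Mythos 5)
Your proposal is correct and follows essentially the same route as the paper: the paper simply cites Theorem \ref{t:per_mean_field_bound} to get $\bar\theta(t)>0$ for every $t>t_T$ (hence $t_c\le t_T$), and that theorem's proof is exactly your case split between monotonicity of $\bar\theta$ when $\bar\theta(t_T)>0$ and Proposition \ref{p:per_lower_bound} (with $\Delta_n(t)<\infty$ from Lemma \ref{l:Delta_n} and $t_T=t_T^{(1)}$ from Proposition \ref{p:equality_t_T}) when $\bar\theta(t_T)=0$. So you have merely inlined the proof of the percolation mean-field bound rather than invoking it, and the bookkeeping you flag about $t_T^{(1)}$ versus $t_T^{(\infty)}$ is handled exactly as in the paper.
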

\begin{proof}
As already noticed in Section \ref{s:perc} we always have $t_T\le t_c$. Let $t>t_T$. By
Theorem~\ref{t:per_mean_field_bound} there exists $A\in\cB(\BM)$ with $\BQ(A)>0$ and
$\theta^p(t)>0$ for all $p\in A$. Therefore $t\ge t_c$ and hence $t_T\ge t_c$. 
\end{proof}

\begin{remark}\label{r:generalmarke}\rm  
Let us consider here the hyperbolic counterpart of the stationary marked RCM; see \cite{Dickson25}.
In this case $\BX:=\BH^d\times\BM$, where $\BH^d$ is the $d$-dimensional hyperbolic space (with $d\ge 2$) 
as in Example \ref{ex:hyperbolic} and $\BM$ is as before. Assume that
$\lambda=\mathcal{H}^d\otimes\BQ$, where $\mathcal{H}^d$ denotes the Haar measure on $\R^d$
and $\BQ$ is a probability measure on $\BM$, also as before.
Assume that the connection function is given by $\varphi((x,p),(y,q))=\tilde{\varphi}(d_{\BH^d}(x,y),p,q)$ for
some measurable $\tilde\varphi\colon\R_+\times\BM^2\to[0,1]$ such that $\tilde{\varphi}(x,\cdot)$ is symmetric
for all $x\in\BH^d$. Fix a point $o\in \BH^d$.
Since the space $\BH^d$ is homogeneous, we can argue as
in Remark \ref{r:432} to see that
\begin{align*}
t_c=\sup\{t\ge 0: \BP_t(|C^{(o,p)}|<\infty)=0 \text{ for $\BQ$-a.e.\ $p$}\},\quad
t_T=\sup\big\{t\ge 0: \esssup_{p\in\BM} \BE_t |C^{(o,p)}|<\infty\big\}.
\end{align*}
Define $d_{\tilde\varphi}(p,q)=\int
\tilde{\varphi}(d_{\BH^d}(x,y),p,q)\,\mathcal{H}^d(dx)$ for
$(p,q)\in\BM^2$. Most of the results of this section
remain true in this setting, provided our assumptions are suitably modified 
(replacement of $d_\varphi$ by $d_{\tilde\varphi}$). Indeed, the geometry
of the ambient space $\R^d$ does not enter most of our arguments.
One exception is Proposition \ref{p:nottriv_t_c_2}. However, we believe that,
under the assumption $\|d_{\tilde\varphi}\|_{1,1} <\infty$, the proof of 
\cite[Proposition 1.1]{DicksonHeydenreich25}
can be extended to the marked case, just as in the Euclidean marked case.
We would then obtain that our condition \eqref{c:main}
(with hyperbolic $d_{\tilde\varphi}$) implies the strong sharp phase transition at $t_T=t_c\in(0,\infty)$, just as in the marked stationary Euclidean
case. 
Thanks to  \cite[Proposition 1.1]{DicksonHeydenreich25}
the condition from Proposition \ref{p:nottriv_t_c} is sufficient for $t_c<\infty$. 
\end{remark}

\bigskip
\noindent
{\bf Acknowledgements:}
This work was supported by the Deutsche Forschungsgemeinschaft
(DFG, German Research Foundation) through the SPP 2265, under grant
number LA 965/11-1.

\end{document}